\documentclass[a4paper,11pt]{amsart}
\usepackage[utf8]{inputenc}
\usepackage[T1]{fontenc}
\usepackage{lmodern}
\usepackage{amsmath, amsthm, amsfonts, amssymb} 
\usepackage{graphicx}
\usepackage{float}
\usepackage{hyperref}
\usepackage[margin=2.5cm]{geometry}
\usepackage{biblatex}
\usepackage{enumerate}
\usepackage{caption}
\usepackage{orcidlink}
\usepackage[hang,flushmargin]{footmisc}

\newcommand{\R}{\mathbb R}
\newcommand{\N}{\mathbb N}
\newcommand{\Z}{\mathbb Z}
\newcommand{\C}{\mathbb C}
\newcommand{\D}{\mathbb D}

\newcommand{\set}[1]{\left\{#1\right\}}
\newcommand{\abs}[1]{\left|#1\right|}
\newcommand{\norm}[1]{\left\lVert#1\right\rVert}

\DeclareMathOperator{\ima}{Im}
\DeclareMathOperator{\dist}{dist}
\DeclareMathOperator{\Area}{Area}

\newtheorem*{remark}{Remark}
\newtheorem{theorem}{Theorem}
\newtheorem{definition}{Definition}
\newtheorem{proposition}{Proposition}
\newtheorem*{example}{Example}
\newtheorem{lemma}{Lemma}
\newtheorem*{corollary}{Corollary}
\newtheorem*{question}{Question}

\begin{document}

\title{On the geometry of unbounded wandering domains}
\author[B. U\v{c}akar]{Beno U\v{c}akar}
\address{
Faculty of Mathematics and Physics, University of Ljubljana, Jadranska 19, 1000 Ljubljana, Slovenia \newline  \indent
Faculty of Mathematics and Computer Science, University of Barcelona, Gran Via de les Corts Catalanes, 585, 08007 Barcelona, Spain \newline \indent
Institute for Mathematics, Physics and Mechanics, Jadranska 19, 1000 Ljubljana, Slovenia 
}
\address{\normalfont \textsc{ORCID: }\orcidlinkc{0009-0007-2006-4107}}

\email{beno.ucakar@imfm.si}
\thanks{The author was supported by the research program P1-0291 from ARIS, Republic of Slovenia\\
2020 \textit{Mathematics Subject Classification:} Primary 37F10; Secondary 30D05, 30D20, 30E10 \\
\textit{Key words and phrases:} Complex dynamics, wandering domains, Arakelian approximation, geometry of wandering domains}
\date{}

\begin{abstract}
    We study the geometry of unbounded wandering domains of entire functions using Arakelian approximation.
    First, we show that, given a uniformly accessible closed set contained in a strip, 
    the connected components of its interior can be realized as escaping or oscillating wandering domains of some entire function.
    The iterates of the function are univalent on these wandering domains, and any unbounded wandering domain remains unbounded under iteration.
    Next, we construct escaping or oscillating wandering domains by lifting bounded wandering domains with prescribed geometry via the exponential map.
    In particular, there exists an entire function that has a half-plane as a wandering domain.
    Finally, we show that, in some precise sense, any simply connected open set can be approximated by escaping or oscillating wandering domains.
    As a direct consequence, we obtain wandering domains whose complements have arbitrarily small areas.
\end{abstract}

\maketitle

\section{Introduction} 

Let $f \colon \C \to \C$ be an entire function and let $f^n$ denote the $n$-th iterate of the function $f$ for $n \ge 0$.
We define the \emph{Fatou set} as the set of all points $z \in \C$ such that the family $(f^n)_{n \ge 1}$ is normal in an open neighbourhood of $z$, and the \emph{Julia set} as its complement.
Intuitively, the Fatou set consists of points with stable dynamics under perturbation, while the dynamics of points in the Julia set are chaotic.
A connected component of the Fatou set is called a \emph{Fatou component} and is either \emph{pre-periodic} if $f^n(\Omega) \cap f^m(\Omega) \neq \emptyset$ for some $n \neq m$,
or a \emph{wandering domain} if $f^n(\Omega) \cap f^m(\Omega) = \emptyset$ whenever $n \neq m$.
See~\cite{HolomorphicDynamics} for an introduction to the theory of complex dynamics.

Our main object of interest in this paper are wandering domains. 
A classical result by Sullivan states that wandering domains do not occur for polynomials, see~\cite{SullivanNoWD}. 
Thus, this phenomenon is exclusive to transcendental entire functions. 
The first example of an entire function with a wandering domain was constructed by Baker in~\cite{BakerFirstWD} using infinite products. 
Since then, many other techniques for constructing wandering domains have been developed,
including approximation theory (\cite{EremenkoLjubichPathologicalExamples}, \cite{GeometryOfWD}, \cite{EremenkoConj}), lift constructions (\cite{HermanLiftWD}), and quasiconformal surgery (\cite{Folding}, \cite{Lazebnik}, \cite{MartiPete_Shishikura}). 

The limit functions on a wandering domain are always constant, see~\cite{Fatou1919}.
Depending on the set of these limit values, a wandering domain is called either \emph{escaping} if $\infty$ is its only limit value, 
\emph{oscillating} if both $\infty$ and at least one finite value occur as limit values, or \emph{orbitally bounded} if all limit values are finite.
Baker's first example of a wandering domain was escaping, while the first oscillating wandering domains were constructed by Eremenko and Lyubich in~\cite{EremenkoLjubichPathologicalExamples}.
Whether orbitally bounded wandering domains actually exist remains a major open problem in the area.

There are many aspects of wandering domains worth studying, but recently attention has turned to the possible geometries wandering domains can take.
More precisely, given an open set $\Omega \subseteq \C$, can this set be a wandering domain of some entire function?
The first to address this question was Boc~Thaler, who, using approximation theory, showed in~\cite{GeometryOfWD} that 
every bounded, connected, and regular open set whose closure has connected complement can be realized as the wandering domain of some entire function.
Recall, that an open set $\Omega$ is called \emph{regular} if $\Omega = \mathring{\overline{\Omega}}$. 
As noted by Boc~Thaler in his paper, regularity is a necessary condition for the set $\Omega$ to be a wandering domain.
These wandering domains can be chosen to be either escaping or oscillating.
Later, in~\cite{EremenkoConj}, Martí-Pete, Rempe, and Waterman developed these ideas further. 
They introduced a change in perspective; instead of starting with an open set and taking its closure, they took a suitable compact set and considered its interior components.
Notice, that the interior of a closed set is a regular open set.
Using this approach, they showed that given a compact set $K \subseteq \C$ with connected complement, there exists an entire function for which the connected components of $\mathring{K}$ are wandering domains.
Thus, more complicated open sets can also be realized as wandering domains, for example, ones whose boundaries form \emph{Lakes of Wada continua}. 

The goal of this paper is to obtain analogous results in the unbounded setting. 
A key tool in our approach is the Arakelian approximation theorem, which enables uniform approximation of holomorphic functions on suitable, possibly unbounded, closed sets called \emph{Arakelian sets}.
We will also make frequent use of a recent characterization of Arakelian sets in~\cite{ArakelianCharacterization} due to Fournodavlos.

Before stating our results, let us briefly discuss the general scheme used to realize simply connected bounded open sets as wandering domains.

Given a compact set $K$ in the right half-plane with connected complement, we construct a family of compact sets $(K_n)_{n \ge 0}$, each with connected complement, such that $K_{n+1} \subseteq \mathring{K}_n$ and $\bigcap_{n \ge 0} K_n = K$.
Next, we choose a sequence of points $(p_n)_{n \ge 1}$ satisfying $p_n \in \mathring{K}_{n-1} \setminus K_{n}$ and $\omega((p_n)_{n \ge 1}) = \partial K$, that is the sequence $(p_n)_{n \ge 1}$ accumulates at every point of $\partial K$.
Using Runge approximation, we then inductively construct a converging sequence of entire functions $(f_n)_{n \ge 1}$ 
such that, on the set $f_n^{n-1}(K_n)$, the function $f_n$ is univalent and acts approximately as a translation to the right, 
while the point $f_n^{n-1}(p_n)$ gets sent to an attracting basin of $f_n$ near the origin. 
Passing to the limit, we obtain an entire function $f$.
Since on the sets $(f^n(K))_{n \ge 0}$ the function $f$ acts approximately as a translation to the right,
the points in $K$ escape uniformly to infinity under iteration, hence $\mathring{K}$ belongs to the Fatou set.
On the other hand, the function $f$ has an attracting basin near the origin. 
Given a point $p_n$, its orbit remains close to that of the compact set $K$ for the first $n-1$ steps of the iteration, 
but on the $n$-th step, it gets sent to the attracting basin.
Consequently, preimages of the attracting basin accumulate on the boundary of $K$, so $\partial K$ has to belong to the Julia set.
The same argument applies to the boundary of any iterate $f^n(K)$, hence the connected components of $\mathring{K}$ are escaping wandering domains.

It is worth emphasizing the importance of injectivity in the above construction.
Firstly, when constructing the function $f_{n+1}$, we want to translate the set $f^n_n(K_{n+1})$ to the right, while sending the point $f^n_n(p_{n+1})$ close to the origin.
For this to be possible, the point $f^n_n(p_{n+1})$ and the set $f^n_n(K_{n+1})$ must be disjoint.
Secondly, the function $f_{n+1}$ is obtained via Runge approximation on the compact set $f_n^n(K_{n+1})$, so the complement of this set has to be connected.
Both assumptions can be guaranteed, provided that the function $f_n$ is injective on the set $f_n^{n-1}(K_n)$.
See Figure~\ref{InjektivnostFail} for an example of how these assumptions can fail if the function $f_n$ is not injective.
As a consequence of the functions $(f_n)_{n \ge 1}$ being injective on their respective sets, 
we can also guarantee that the iterates $f^n$ are univalent on $\mathring{K}$.

\begin{figure}[ht]
    \centering
    \includegraphics[width=0.79\textwidth]{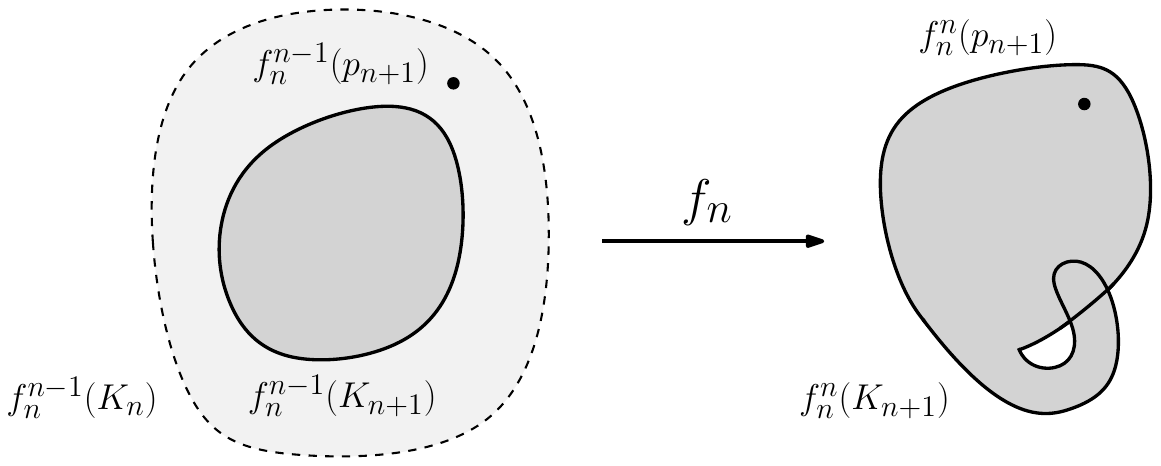}
    \caption{If the function $f_n$ is not injective on $f_n^{n-1}(K_n)$, the assumptions needed for the inductive step might not hold.}
    \label{InjektivnostFail} 
\end{figure}

Given a univalent function defined on a neighbourhood of a compact set $K$, 
one can always ensure that a sufficiently close approximating function is univalent on $K$.
In the unbounded setting, this problem is more delicate.
For a univalent function defined on the neighbourhood of a possibly unbounded closed set $F$,
it is in general not possible to guarantee the injectivity of an approximating function on $F$ by merely approximating well enough. 
Nevertheless, sufficient conditions were established by Martí-Pete, Rempe, and Waterman, see~\cite[Lemma 2.3]{EremenkoConj}.
Note that the assumptions of the following proposition are always satisfied for compact sets.

\begin{proposition}\label{unbounded_injectivity}
    Let $U \subseteq \C$ be an open set and let $\phi \colon U \to \phi(U)$ be a biholomorphic function.
    Suppose that $F \subseteq U$ is a closed set satisfying 
    \[\dist(F, \partial U) > 0 \quad \text{and} \quad \inf_{z \in F} \abs{\phi'(z)} > 0.\]
    Then there exists a constants $\delta > 0$ such that, if $f \colon U \to \C$ is a holomorphic function satisfying $\norm{f-\phi}_U < \delta$,
    the function $f$ is injective on $F$, $f(F) \subseteq \phi(U)$ and $\dist(f(F), \partial \phi(U)) > 0$.
\end{proposition}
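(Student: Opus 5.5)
The plan is to combine the Koebe distortion theorem for $\phi$ on discs of a fixed radius centred at points of $F$ with Cauchy estimates for the error $g = f - \phi$. Set $r = \min\{1, \dist(F, \partial U)\} > 0$ (the minimum with $1$ handles $U = \C$) and $m = \inf_{z \in F} \abs{\phi'(z)} > 0$. For every $z \in F$ the disc $D(z, r)$ lies in $U$, so $\phi$ is univalent there. All constants will depend only on $m$ and $r$; this uniformity is what replaces compactness.

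\textbf{Step 1: geometry of $\phi$ near $F$.} Apply Koebe's theorems to the normalised function $\zeta \mapsto (\phi(z + r\zeta) - \phi(z))/(r\phi'(z))$ on $\D$.
\begin{enumerate}[(a)]
\item By the $1/4$-theorem, $\phi(D(z,r)) \supseteq D(\phi(z), m r/4)$.
\item By the distortion lower bound $\abs{h(\zeta)} \ge \abs{\zeta}/(1+\abs{\zeta})^2$, for $\abs{w - z} \le r/2$ we get
\[\abs{\phi(w) - \phi(z)} \ge \tfrac{4}{9}\, m\, \abs{w-z}.\]
\item Applying (b) on the circle $\abs{w-z} = r/2$ shows that $\phi(D(z,r/2))$ contains $D(\phi(z), 2mr/9)$. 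Since $\phi$ is globally injective on $U$, every $w \in F$ with $\abs{w-z} \ge r/2$ satisfies $\abs{\phi(w)-\phi(z)} \ge 2mr/9$.
\end{enumerate}

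\textbf{Step 2: Cauchy estimate for $g$.} Suppose $\norm{g}_U < \delta$. Every point of $D(z, r/2)$ is the centre of a disc of radius $r/2$ contained in $D(z,r) \subseteq U$, so $\abs{g'} \le 2\delta/r$ on $D(z,r/2)$. Integrating along the segment gives $\abs{g(w) - g(z)} \le (2\delta/r)\abs{w-z}$ whenever $\abs{w-z} < r/2$.

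\textbf{Step 3: injectivity and the image, in two cases.} Take distinct $z, w \in F$.
\begin{itemize}
\item If $\abs{z-w} < r/2$, Steps 1(b) and 2 give $\abs{f(z)-f(w)} \ge (4m/9 - 2\delta/r)\abs{z-w}$, which is positive provided $\delta < 2mr/9$.
\item If $\abs{z-w} \ge r/2$, Step 1(c) gives $\abs{f(z)-f(w)} \ge 2mr/9 - 2\delta$, which is positive provided $\delta < mr/9$.
\end{itemize}
For the image, $\abs{f(z) - \phi(z)} < \delta$ together with Step 1(a) gives $f(z) \in D(\phi(z), mr/4) \subseteq \phi(U)$. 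Moreover $\dist(f(z), \partial\phi(U)) \ge mr/4 - \delta$, which is at least $mr/8$ when $\delta \le mr/8$. Hence $\delta = mr/10$ works for all three conclusions.

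The main obstacle is Step 1: obtaining bi-Lipschitz-type lower bounds for $\phi$ on $F$ that are uniform, even though $F$ may be unbounded and $\phi$ need not be uniformly continuous. This is exactly where both hypotheses enter. The positive distance to $\partial U$ gives a uniform radius of univalence, and the lower bound on $\abs{\phi'}$ turns Koebe's normalised estimates into absolute ones. The far-apart case in Step 1(c) deliberately uses only the injectivity of $\phi$ together with the local Koebe disc, so no global control of $\phi$ is needed.
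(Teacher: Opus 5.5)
Your proof is correct, and the constants check out. With $\delta = mr/10$, the near case gives $\abs{f(z)-f(w)} \ge \tfrac{11m}{45}\abs{z-w}$ and the far case gives $\abs{f(z)-f(w)} \ge \tfrac{mr}{45}$. Also, $f(F)$ stays at distance at least $mr/8$ from $\partial\phi(U)$. The one slip is notational. In this paper $D(z,r)$ is a \emph{closed} disc, and it need not lie in $U$ when $r = \dist(F,\partial U)$. However, you only ever use the open disc, which is what your normalisation on $\D$ amounts to, so nothing changes.

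The paper does not prove this proposition itself; it imports it from Lemma 2.3 of Martí-Pete, Rempe and Waterman, so there is no internal argument to match. For the bare qualitative statement there is a shorter route that uses only the $1/4$-theorem and Rouché's theorem, with no growth theorem, Cauchy estimate or case split. Put $s = mr/8$ and $G_z = \phi^{-1}(\set{\omega : \abs{\omega-\phi(z)}<s})$; by Koebe, its closure lies in the open disc of radius $r$ about $z$. For $\abs{c-\phi(z)}<s/2$ one has $\abs{\phi-c} > s/2 > \delta > \abs{f-\phi}$ on $\partial G_z$, so by Rouché's theorem $f$ takes the value $c$ exactly once in $G_z$. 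Now suppose $f(z)=f(w)$ with $z,w\in F$. Then $\abs{\phi(w)-\phi(z)}<2\delta<s$, so global injectivity of $\phi$ puts $w$ in $G_z$, and hence $w=z$.

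What your route buys is a uniform modulus of injectivity on $F$, namely $\abs{f(z)-f(w)} \ge \min\{\tfrac{11m}{45}\abs{z-w},\, \tfrac{mr}{45}\}$. This gives at once that $f(F)$ is closed and that $(f\vert_F)^{-1}$ is uniformly continuous, so $f\vert_F$ is a homeomorphism onto a closed set. These are precisely the facts the paper needs in Lemma~\ref{unbounded_injective_uniff_access}. There the paper derives them separately, from the observation that maps uniformly close to a linear map send unbounded sets to unbounded sets. Your estimate yields them for a general biholomorphic $\phi$.
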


\begin{remark}
    The condition $\inf_{z \in F} \abs{\phi'(z)} > 0$ in the above proposition is crucial 
    and is the one that fails when one tries to approximate biholomorphic functions that map unbounded sets to bounded ones.
    For example, let $U$ be the upper half-plane, $F = \set{z \in \C \mid \ima(z) \ge 1}$, and let $\phi \colon U \to \phi(U)$ be either a branch of the logarithm or the Cayley transform $\tau(z) = \frac{z-i}{z+i}$.
    In both cases, the value of $\abs{\phi'}$ degenerates to $0$ along the set $F$.
    Moreover, in both cases we have $\dist(\phi(F), \partial \phi(U)) = 0$, see Figure~\ref{fig_INJ_propade}.
    Similar problems occur if we work with general Riemann maps.
\end{remark}

\begin{figure}[ht]
    \centering
    \includegraphics[width=0.83\textwidth]{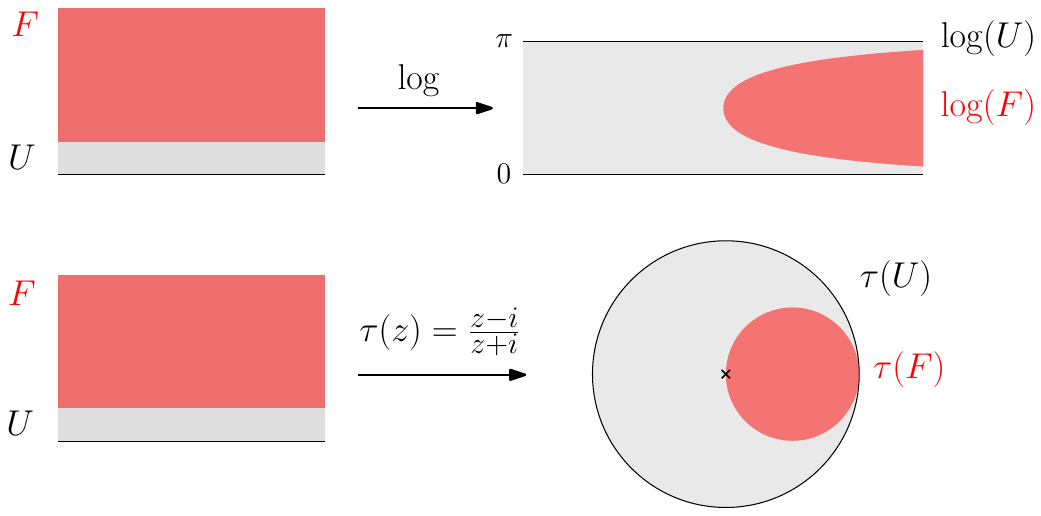}
    \caption{Examples of functions on the upper half-plane that do not satisfy the assumptions of Proposition~\ref{unbounded_injectivity}.}
    \label{fig_INJ_propade} 
\end{figure}

The above remark suggests that, if we want to preserve injectivity under approximation, the best we can do in general is to take $\phi$ to be a linear map.
Constructing wandering domains using linear maps imposes a natural restriction on the closed set $F$ that we can consider; there must exist a linear map $L$ such that the sets $(L^n(F))_{n \ge 0}$ are pairwise disjoint.
The simplest way to ensure this condition is to consider closed sets contained in strips, and for the sake of simplicity, we will consider vertical strips in this paper.
To realize unbounded wandering domains, the closed set $F$ will need to satisfy an additional geometric assumption, which we call \emph{being uniformly accessible}, see Definition~\ref{uniformly_accessible}.
This condition is the analogue, in the unbounded setting, of the requirement that a compact set has to have connected complement in the bounded setting.

\begin{theorem}\label{stripWDescaping}
    Denote the strip $S = [0,1] \times \R \subseteq \C$ and let $F \subseteq S$ be a uniformly accessible closed set such that $\dist(F, \partial S) > 0$.
    Then there exists an entire function $f \in \mathcal{O}(\C)$ for which every connected component of $\mathring{F}$ is an escaping wandering domain.
    Furthermore, the iterates $f^n\vert_{\mathring{F}}$ are univalent, and any unbounded component of $\mathring{F}$ remains unbounded under iteration.
\end{theorem}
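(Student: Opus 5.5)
The plan is to adapt the Boc~Thaler / Martí-Pete--Rempe--Waterman scheme from the introduction to the unbounded setting. Runge approximation is replaced by Arakelian approximation, and Proposition~\ref{unbounded_injectivity} is used with $\phi$ a translation. I will take $f$ to be close to the translation $z \mapsto z+2$ near the strips $S_n = S + 2n$. The sets $F + 2n$ are then pairwise disjoint, and after the perturbation their images stay at positive distance from $\partial S_n$.

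\textbf{Step 1: exhaustion and test points.} Using uniform accessibility (Definition~\ref{uniformly_accessible}), I build closed sets $F = \bigcap_{n} F_n$ with $F_{n+1} \subseteq \mathring{F}_n \subseteq F_n \subseteq \mathring{S}$ and each $F_n$ uniformly accessible. I also want $\dist(F_{n+1}, \C\setminus F_n) > 0$. I then choose points $p_n \in \mathring{F}_{n-1} \setminus F_n$ whose accumulation set contains $\partial F$. Since $\partial F$ is unbounded, I take a countable dense subset of $\partial F$ and list each point infinitely often, with $p_n$ within $1/n$ of it. I also fix a disc $D$ far to the left, say around $-10$, on which $f$ will be close to the constant $-10$. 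This creates an attracting basin $B \supseteq D$ disjoint from all the strips.

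\textbf{Step 2: inductive Arakelian approximation.} I construct entire functions $f_n$ with $\norm{f_n - f_{n-1}} < \varepsilon_n$ on a growing family of closed sets $A_n$. The set $A_n$ contains $D$, the sets $f_{n-1}^{k}(F_{k}) \subseteq S_{k}$ for $k < n$, and a small disc around $f_{n-1}^{n-1}(p_n)$. On the last disc $f_n$ is prescribed to map into $D$; elsewhere it is the translation or the constant. The key point is that $A_n$ must be an Arakelian set, meaning $\hat{\C}\setminus A_n$ is connected and locally connected at $\infty$. I will check this with Fournodavlos' characterization \cite{ArakelianCharacterization}. Injectivity of $f_{n-1}^{k}$ on $F_k$ comes inductively from Proposition~\ref{unbounded_injectivity}. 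Together with the positive-distance conclusion of that proposition, it shows that $f_{n-1}^{k}(F_k)$ is again uniformly accessible, being close to a translate of $F_k$ in a controlled way. It also shows that the image point $f_{n-1}^{n-1}(p_n)$ lies outside $f_{n-1}^{n-1}(F_n)$ at positive distance. The tolerances $\varepsilon_n$ are summable and chosen smaller than the $\delta$'s from Proposition~\ref{unbounded_injectivity} at all earlier stages. This lets the limit $f = \lim f_n$ inherit every injectivity and mapping property. On the closed sets this requires Arakelian approximation with a uniform error bound, not just pointwise closeness, which Arakelian's theorem provides.

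\textbf{Step 3: dynamics of the limit.} On $F$, $f^n(F) \subseteq S_n$, so $\Re f^n \to \infty$ uniformly. Hence $\mathring{F}$ lies in the Fatou set and its points escape. Each $p_n$ lands in $B$ after $n$ steps, while the orbits of points of $F$ never meet $B$. Since the $p_n$ accumulate on all of $\partial F$, normality fails at every boundary point, so $\partial F \subseteq J(f)$. Thus the components of $\mathring{F}$ are Fatou components. They are wandering because their images lie in the disjoint strips $S_n$. Univalence of $f^n$ on $\mathring{F}$ follows from Step~2. Unboundedness is preserved because on $S_n$ the map $f$ is within a bounded distance of a translation, so $f^n$ moves points by a bounded amount and unbounded sets stay unbounded.

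The main obstacle is Step~2, specifically verifying the Arakelian condition at $\infty$ for $A_n$ after the finitely many perturbed images are glued together with the discs. This is where uniform accessibility has to be shown stable under maps uniformly close to translations. My first attempt would be a direct argument through Fournodavlos' criterion, using the positive distance of each piece from the strip boundaries to build the required uniform escape paths to $\infty$ in the complement.
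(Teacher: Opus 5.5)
Your overall scheme matches the paper's: decreasing uniformly accessible sets $F_n$ with test points $p_n\in\mathring{F}_{n-1}\setminus F_n$, an attracting disc on the left, near-translations between disjoint vertical strips, Proposition~\ref{unbounded_injectivity} for injectivity, and stability of uniform accessibility under near-translations (Lemma~\ref{unbounded_injective_uniff_access}). However, Step~2 as written has two genuine gaps.

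First, your approximation sets consist only of $D$, the images of the $F_k$, and the test discs, so they do not exhaust $\C$. Nothing controls $f_n-f_{n-1}$ elsewhere, and the limit need not be entire. You must also approximate $f_{n-1}$ on closed sets that exhaust $\C$, stay away from where the new data is placed, and still combine with the other pieces into an Arakelian set. The paper does this by approximating $f_n$ on the whole closed strip $P_n$. This strip contains $D$ and $S_1,\dots,S_n$, is Arakelian, and is disjoint from $S_{n+1}$, where all new data lives. Then $\Pi_{n+1}=P_n\cup E_{n+1}\cup B_{n+1}$ is a finite disjoint union of Arakelian sets and $\bigcup_n \Pi_n=\C$.

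Second, the obstacle you flag cannot be resolved by checking Fournodavlos' criterion on the sets $f^{k}_{n-1}(F_k)$ themselves. Uniformly accessible closed sets need not be Arakelian: the comb in Figure~\ref{fig1} is uniformly accessible but fails BEH. Your $F_k$ and their near-translates are only known to be uniformly accessible, and escape paths in the complement do not repair a failure of local connectivity at $\infty$ inside one strip.

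Moreover, Proposition~\ref{unbounded_injectivity} needs closeness to the translation on an open set with a uniform margin around $f^{n-1}_{n-1}(F_n)$, and this set must exclude the test point $f^{n-1}_{n-1}(p_n)$. The sets you list cannot play this role for $k=n-1$, since $f^{n-1}_{n-1}(F_{n-1})$ contains the test point.

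The missing idea is Lemma~\ref{Strip_Arakelian}. Uniform accessibility gives a curve $\eta$ from the test point to $\infty$ at positive distance from the image set. Removing a closed tube around $\eta$ from the open strip gives a simply connected open set $W$. Proposition~\ref{Arakelian_so_gosti} (a locally finite disc cover plus hole filling inside $W$) then produces an Arakelian set $E$ that contains the image set with positive distance to $\partial E$ and leaves the test point outside. One approximates the translation on all of $E$.

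Finally, you also need uniform control of iterates on unbounded sets: $g^{n}(F_{n+1})$ must stay within $\lambda_{n+1}$ of $f_n^n(F_{n+1})$, so that it lands where $g$ is a near-translation. The paper gets this from Lemma~\ref{approxIter}, using uniform bounds on $\abs{f_n'}$ on neighbourhoods $V_n$ with margin $\lambda_n$. Proposition~\ref{unbounded_injectivity} alone does not provide this.
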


\begin{theorem}\label{stripWDoscillating}
    Denote the strip $S = [0,1] \times \R \subseteq \C$ and let $F \subseteq S$ be a uniformly accessible closed set such that $\dist(F, \partial S) > 0$.
    Then there exists an entire function $f \in \mathcal{O}(\C)$ for which every connected component of $\mathring{F}$ is an oscillating wandering domain.
    Furthermore, the iterates $f^n\vert_{\mathring{F}}$ are univalent, and any unbounded component of $\mathring{F}$ remains unbounded under iteration.
\end{theorem}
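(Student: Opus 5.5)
I will follow the same inductive approximation scheme as for the escaping case (Theorem~\ref{stripWDescaping}), replacing Runge approximation by Arakelian approximation and modifying the dynamics so that orbits return periodically to a bounded region. Concretely, I fix a neighbourhood $U$ of the strip, for instance $U=\{-\varepsilon<\operatorname{Re} z<1+\varepsilon\}$ with $\varepsilon<\dist(F,\partial S)$. I also fix a nested family of uniformly accessible closed sets $F_n\subseteq S$ with $F_{n+1}\subseteq \mathring F_n$, $\bigcap_n F_n=F$ and $\dist(F_n,\partial S)$ bounded below. Finally I fix points $p_n\in \mathring F_{n-1}\setminus F_n$ accumulating on all of $\partial F$; since $F$ is unbounded, this is a countable set going to infinity vertically, which is fine.

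To obtain oscillation, I will use a sequence of linear maps rather than one translation. The orbit of $F$ moves through translated strips $S_k=S+t_k$, which are pairwise far apart. Along a sparse subsequence of times $n_j$, the map sends the strip $S_{n_j}$ back to a fixed "home" strip region $S_0'$ near $S$, for example $S+2$, by a translation. Here I need the images of $F$ at these return times to be disjoint from all other pieces. I will arrange this by letting the vertical shifts $\ima t_k$ grow, so that the return images land in $S_0'$ shifted by large imaginary translations. Since the strip is unbounded vertically, I instead choose the home strips to be $S+2+2j$ with real parts bounded, for example alternating between $S+2$ and $S+4$ plus large vertical offsets. Then a fixed point of the orbit has real part returning to a bounded interval infinitely often. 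This is not yet a finite limit, because the imaginary part may drift. So I instead take all maps to be real translations $z\mapsto z+a_k$, $a_k\in\R$, with the strips $S+\sum a_k$ pairwise disjoint. The real parts go to $+\infty$ between returns and come back to pairwise distinct strips $S+c_j$ with $c_j\to c^*$ bounded; disjointness of the strips only requires that the $c_j$ be separated by more than $1$. I therefore cannot make them converge, and I replace strip-disjointness by disjointness of the images of $F$. I choose $F$'s images at return times to be $F+c_j+i y_j$ with $y_j$ large. Oscillation needs a finite limit for some point, not for the whole domain, so it suffices that one point $z_0\in\mathring F$ returns near a fixed compact set. This can be arranged by a translation that sends $z_0+t$ near $0$ while moving the rest of the strip, since by Proposition~\ref{unbounded_injectivity} any translation is admissible.

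At each step $n$ I define a holomorphic model on an Arakelian set $A_n$. This set is the union of a neighbourhood of the current image $f_{n}^{n-1}(F_n)$ (a translate of $F_n$), where the model is the chosen translation; a small disc around $f_n^{n-1}(p_n)$, where the model is a constant in an attracting basin $B$ near a fixed point $w_0$ placed far from all strips; and all previous pieces, where the model is $f_{n-1}$. Fournodavlos' characterization, together with uniform accessibility of translates and the fact that the pieces lie in disjoint strips at positive distance, shows that $A_n$ is Arakelian. Arakelian's theorem then gives $f_n$ with error $\delta_n$ that is summable and smaller than the threshold in Proposition~\ref{unbounded_injectivity}. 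That proposition yields injectivity on the translated $F_n$ and keeps images at positive distance from the strip boundaries, which is what allows the induction to continue. In the limit, the uniform estimates show that $f^n$ is close to the composite translation on $F$, so $f^n|_{\mathring F}$ is univalent and unbounded components stay unbounded. Montel's theorem puts $\mathring F$ in the Fatou set. The preimages of $B$ accumulating at the $p_n$ put $\partial F$ and the boundaries of all iterates in the Julia set, and disjointness of the images gives the wandering property. The limit set of an orbit contains $\infty$ together with the finite return point, so the domain is oscillating.

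The main obstacle is the return steps. I must send the current image of $F$, which is unbounded, back towards a bounded region, while it remains disjoint from all previously used pieces and from the basin. I also need the union to stay Arakelian, since returning strips could trap complement components or block accessibility to $\infty$. I would handle this by putting all forward pieces in the right half-plane at real parts in $[2k,2k+1]$ and the return pieces at large negative imaginary heights inside a separate vertical corridor. I would then verify via Fournodavlos' criterion that every hole is uniformly reachable from infinity.
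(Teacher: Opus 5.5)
\textbf{There is a genuine gap: with translations alone, oscillation is impossible.} You end up taking all the maps to be (approximately) translations. Your own conclusion is then that $f^n$ is close to the composite translation $z\mapsto z+T_n$ on $F$, and this rules out oscillation.

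Limit functions on a wandering domain $\Omega$ are constant. So if $f^{n_j}(z_0)\to w\in\C$ for some $z_0\in\Omega$, then $f^{n_j}\to w$ locally uniformly on $\Omega$, and $(f^{n_j})'\to 0$ there. But if $f^{n_j}$ stays uniformly close to a translation on $F\supseteq D(z_0,r)$, Cauchy estimates keep $|(f^{n_j})'(z_0)|$ near $1$. Equivalently, each $f^{n_j}(\Omega)$ contains a disc of fixed radius around $f^{n_j}(z_0)$, and infinitely many pairwise disjoint such discs cannot stay in a bounded region.

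No choice of translation vectors $a_k$, offsets $c_j$, or base point $z_0$ avoids this. Also, a translation cannot send $z_0+t$ near $0$ ``while moving the rest of the strip''. The vertical offsets do not help either. $F$ may be unbounded in both vertical directions, e.g.\ $F=[1/4,3/4]\times\R$. Then $F+c_j+iy_j=F+c_j$, and such a piece cannot be placed ``at large negative imaginary heights'' or in a separate corridor.

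\textbf{What is missing: a contraction at return times and a consistent landing scheme.} In the paper, at stage $n+1$ the piece $E_{n+1}^+\supseteq f_n^{N_n+n}(F_{n+1})$, which lies in $S_{n+1}$, is sent back by a linear map $L_{n+1}$. This map satisfies $L_{n+1}(f_n^{N_n+n}(q))=q_{n+1}$ and $|L_{n+1}'|<M^{-N_n-n}2^{-(n+1)}$, where $M$ bounds $|f_n'|$ on $V_n$. This gives $|(f^{N_n})'|<2^{-n}$ on $F$ and $f^{N_n}(q)\to -2/3$.

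The landing regions are the nested substrips $C_{n+1}^+\subseteq C_{n+1}\subseteq C_n^-\subseteq S_1$. These are pairwise disjoint and shrink to the line $\operatorname{Re} z=-2/3$. Each sits inside $C_{n+1}$, whose iterates through $S_2,\dots,S_{n+1}$ are already prescribed by the translations chosen at stages $1,\dots,n$. Hence a returning image automatically follows an already fixed path. The only new map needed is the next return, defined on part of the fresh strip $S_{n+1}$, which is disjoint from $P_n$. Consequently $\Pi_{n+1}=P_n\cup E_{n+1}^-\cup E_{n+1}^+\cup B_{n+1}$ is Arakelian, agrees with $f_n$ where $f_n$ is already fixed, and exhausts $\C$, so the $f_n$ converge.

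Your sets $A_n$ have no analogue of this. You never say what $f$ does on the landing regions. You never explain how those regions avoid pieces where $f$ is already prescribed. And you never show why $\bigcup_n A_n=\C$, which is needed for the $f_n$ to converge.

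\textbf{Minor points.} A neighbourhood of a uniformly accessible set need not be Arakelian; you need something like Lemma~\ref{Strip_Arakelian}. You also need Lemma~\ref{unbounded_injective_uniff_access}, not just Proposition~\ref{unbounded_injectivity}, to keep the images uniformly accessible for the next stage of the induction.
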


It was already observed by the authors of~\cite{EremenkoConj} that certain unbounded open sets can be realized as wandering domains. 
In particular, they explain how to modify their construction to obtain an entire function with a half-strip as an oscillating wandering domain.
The above results allow us to realize even more general unbounded open sets as wandering domains of entire functions. 
Furthermore, the iterates of these functions are univalent on their corresponding wandering domains, and any unbounded wandering domain remains unbounded under iteration.

Another way to realize certain unbounded open sets as wandering domains is to lift bounded wandering domains with prescribed geometry via the exponential map.
This approach allows us to construct an entire function with a half-plane as a wandering domain. More generally, we have the following theorem.

\begin{theorem}\label{liftWD}
    Let $\Omega_0$ be a bounded, connected, regular open set containing the origin and whose closure has connected complement. 
    We denote the preimage $\Omega = \exp^{-1}(\Omega_0)$.
    Then there exists an entire function $f \in \mathcal{O}(\C)$ for which $\Omega$ is an escaping or oscillating wandering domain.
    The set $\Omega$ is unbounded and contains a half-plane, while the sets $(f^n(\Omega))_{n \ge 1}$ are bounded. 
    Furthermore, the function $f \vert_\Omega \colon \Omega \to f(\Omega)$ is $\infty$-to-$1$ while the iterates $f^n \vert_{f(\Omega)}$ are univalent.
\end{theorem}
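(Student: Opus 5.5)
The plan is to realize $\Omega$ as a lift: I will construct an entire function $g$ such that $\Omega_0$ is a wandering domain of $g$, and then set $f = g\circ\exp$. A direct computation gives $f^{n} = (g\circ \exp)^{n} = g\circ (\exp\circ g)^{n-1}\circ \exp$. Since $\Omega = \exp^{-1}(\Omega_0)$, we get $f(\Omega) = g(\Omega_0)$, and more generally $f^n(\Omega)$ is obtained from $g(\Omega_0)$ by iterating $h = g\circ\exp$. Conjugating instead by $\exp$, the map $\tilde g = \exp\circ g$ satisfies $\exp\circ f = \tilde g\circ \exp$. So the natural approach is to build $g$ directly by the Boc~Thaler scheme recalled in the introduction, adapted to the composite dynamics.

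\textbf{Construction.} Take compact sets $K_n\searrow \overline{\Omega_0}$ with connected complements, and points $p_n\in \mathring K_{n-1}\setminus K_n$ accumulating on all of $\partial\Omega_0$. Then use Runge approximation to build $g_n\to g$ inductively with the following properties.
\begin{enumerate}[(i)]
\item On $K_0$, $g$ is close to a constant-like map sending $K_1$ into a small disc $D_1$ far to the right.
\item On the discs $D_k$, the map $f=g\circ\exp$ must act univalently and push forward. Since $\exp$ is univalent on any disc of diameter less than $2\pi$, the set $\exp(D_k)$ is a compact set with connected complement, and I prescribe $g$ on it to be approximately the map $w\mapsto \log w + c_k$. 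This is $\exp^{-1}$ composed with a translation, so $f$ acts on $D_k$ as a translation to $D_{k+1}$ (escaping case), or alternately returns near a fixed bounded region in the oscillating case, as in \cite{GeometryOfWD}.
\item At stage $n$, the image point $f_n^{n-1}(\exp$-image of $p_n)$ is sent into an attracting basin placed away from everything else.
\end{enumerate}
The compact sets on which we approximate at each stage are $K_0$ together with finitely many sets $\exp(\overline{D_k})$, chosen disjoint with connected union complement. Injectivity of the approximants on $f_n^{j}(\cdot)$ follows from the compact case of Proposition~\ref{unbounded_injectivity}.

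\textbf{Verification.} For $z\in\Omega$ we have $\exp z\in\Omega_0$, and then $f^n(z)$ follows the orbit through the bounded discs $D_k$, converging uniformly to $\infty$ (respectively oscillating). Hence $\Omega$ lies in the Fatou set. Conversely, the points $p_n$ accumulate on $\partial\Omega_0$, so their $\exp$-preimages accumulate on every point of $\partial\Omega$. These preimages land in the attracting basin, so $\partial\Omega$ lies in the Julia set. The set $\Omega$ is connected because $0\in\Omega_0$ and $\Omega_0$ is connected: $\exp^{-1}$ of a connected open set containing a punctured neighbourhood of $0$ is connected, since loops around $0$ let one move between branches. Moreover, $\Omega$ contains the half-plane $\exp^{-1}(B(0,r)) = \{\operatorname{Re} z<\log r\}$. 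The images $f^n(\Omega)\subseteq D_n$ are bounded and pairwise disjoint. The restriction $f|_\Omega$ is $\infty$-to-$1$ because it factors through the $2\pi i$-periodic map $\exp$. Finally, $f^n|_{f(\Omega)}$ is univalent by the injectivity of each approximant on the relevant discs.

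\textbf{Main obstacle.} The step I expect to be hardest is ensuring that $g$ is injective on $\Omega_0$, or at least that $f(\Omega)$ is a full Fatou component rather than a proper subset of one. The component of the Fatou set containing $f(\Omega)$ must be exactly $g(\Omega_0)$, which requires $g$ to be close to a univalent map on a neighbourhood of $\overline{\Omega_0}$. I would therefore prescribe $g\approx$ an affine map $w\mapsto aw+b$ on $K_1$, with image inside a fixed disc of diameter less than $2\pi$, rather than a constant. A second delicate point is keeping $0$ out of $g(K)$ for the relevant compact sets $K$, so that $\log$ is well defined there.
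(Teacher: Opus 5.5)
Your proposal is correct in substance once a few slips are fixed, and it takes a different route from the paper.

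\textbf{The paper's route.} The paper first builds a \emph{nonvanishing} entire $g$ having $\Omega_0$ itself as a wandering domain, with the orbit of $g(\Omega_0)$ kept in the right half-plane. This is Boc~Thaler's construction run with Lemma~\ref{Runge_omitted_value} in place of Runge. The paper then takes the lift $f$ with $g\circ\exp=\exp\circ f$ and invokes Bergweiler's theorem \cite{BergweilerExpLift} to conclude that $\exp^{-1}(\Omega_0)$ is a Fatou component. Wandering, the escaping/oscillating type, boundedness of $f^n(\Omega)$, and the $\infty$-to-$1$ and univalence statements are all transported through the semiconjugacy.

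\textbf{Your route.} You build the $2\pi i$-periodic function $f=g\circ\exp$ directly. You run the inductive scheme for $f$ in the $z$-plane, while the Runge approximation is done for $g$ in the coordinate $w=e^z$: on $K_1$, on small discs around the diverted points, and on $\exp$-images of compact sets inside the small discs $D_k$, where $\exp$ is univalent. You then check $\Omega\subseteq F(f)$ by uniform escape, and $\partial\Omega\subseteq J(f)$ because $\exp^{-1}(\{p_n\})$ accumulates on every point of $\partial\Omega$. You also correctly note that $g$ must be close to a univalent (affine) map on $K_1$, so that $g(p_n)\notin g(K_n)$.

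\textbf{What each buys.} Your route avoids Bergweiler's lifting theorem, the nonvanishing approximation lemma, and the branch-of-logarithm bookkeeping, and periodicity comes for free. The paper's route is shorter because it quotes \cite{GeometryOfWD} instead of redoing the induction. The two constructions produce functions of the same form: the paper's $g$ equals $e^G$ for some entire $G$, so its lift is $G\circ\exp$ up to an additive constant, with your $g$ in the role of $G$.

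\textbf{Slips to correct.}
\begin{itemize}
\item Your opening sentence is false as stated. Since $\exp\circ f=(\exp\circ g)\circ\exp$, $\Omega_0$ being a wandering domain of $g$ tells you nothing about $g\circ\exp$. Your actual construction never uses this, so drop it.
\item The affine prescription must be on $K_1$, not $K_0$. Likewise, $w\mapsto\log w+c_k$ can only be imposed on $\exp$ of a compact neighbourhood of $f^{n-1}(\exp^{-1}(K_n))$, not on all of $\exp(\overline{D_k})$. Otherwise there is no room to divert the orbits in (iii).
\item The diverted points are $\exp^{-1}(p_n)$, whose first image is $g(p_n)$, not an ``$\exp$-image of $p_n$''.
\item At each stage the new sets must lie far enough to the right that their $\exp$-images miss the disc on which $g_{n-1}$ is being approximated.
\item You have $f(\Omega)=g(\Omega_0\setminus\{0\})=g(\Omega_0)\setminus\{g(0)\}$ rather than $g(\Omega_0)$. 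This is harmless.
\item Your ``second delicate point'' does not arise in your route, since no logarithm of $g$ is ever taken.
\item In the oscillating case, the return region must lie to the right of $\log R$, where $\overline{\Omega_0}\subseteq D(0,R)$, so that its $\exp$-image avoids $K_0$. With the fresh-region structure from the proof of Theorem~\ref{stripWDoscillating}, this goes through.
\end{itemize}
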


\begin{corollary}
    There exists an entire function that has a half-plane as a wandering domain.
\end{corollary}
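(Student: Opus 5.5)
The corollary follows from Theorem~\ref{liftWD} with a suitable choice of $\Omega_0$. I take $\Omega_0 = \D$, the open unit disc. It is bounded, connected and contains the origin. It is regular, since $\mathring{\overline{\D}} = \D$. Its closure $\overline{\D}$ has connected complement $\C \setminus \overline{\D}$. So all hypotheses of Theorem~\ref{liftWD} hold.

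Next I compute the lift. Since $\abs{e^z} = e^{\operatorname{Re} z}$, we get
\[\exp^{-1}(\D) = \set{z \in \C \mid \operatorname{Re} z < 0},\]
which is exactly the left half-plane. Theorem~\ref{liftWD} then gives an entire function $f$ for which $\Omega = \exp^{-1}(\D)$ is an escaping or oscillating wandering domain. In particular, $\Omega$ is a full Fatou component, not merely a subset of one.

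If a different half-plane $H$ is wanted, compose with an affine map. Choose $A(z) = az+b$ with $a \ne 0$ and $A(H) = \Omega$, and set $g = A^{-1} \circ f \circ A$. Then $g^n = A^{-1} \circ f^n \circ A$, so conjugation by $A$ carries the Fatou set of $f$ onto that of $g$. Hence $H$ is a wandering domain of the entire function $g$. No step is a real obstacle here; the only point needing care is checking the regularity and connected-complement hypotheses, which is immediate for the disc.
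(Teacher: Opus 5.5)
Your proposal is correct and matches how the paper obtains the corollary: it is the special case of Theorem~\ref{liftWD} with $\Omega_0$ a disc centred at the origin, such as $\D$, whose preimage under $\exp$ is the left half-plane $\set{z \in \C \mid \operatorname{Re} z < 0}$. The affine conjugation at the end is harmless but not needed for the statement as posed.
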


Taking the preceding discussion into account, we see that, to realize an unbounded open set as a wandering domain, one must impose significant restrictions on its geometry.
If these geometric restrictions are relaxed, can we prescribe the geometry of a wandering domain \emph{approximately}?
This question is made more precise by the following two theorems. 
In the statements that follow, we adopt the convention that a simply connected set need not itself be connected; rather, each of its connected components is simply connected in the usual sense.

\begin{theorem}\label{approxWD_escaping}
    Let $F \subseteq U \subsetneq \C$, where $F$ is a closed set and $U$ is a simply connected open set whose connected components are all unbounded.     
    Then there exists an entire function $f \in \mathcal{O}(\C)$ such that, for every connected component $F_0$ of $F$,
    the function $f$ has a simply connected escaping wandering domain $\Omega$, such that $F_0 \subseteq \Omega \subseteq U$.
\end{theorem}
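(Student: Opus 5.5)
The plan is to reduce to Theorem~\ref{stripWDescaping} by conformally straightening each component of $U$ into the strip. This is possible only after shrinking $F$ slightly. The difficulty is that a Riemann map cannot be approximated while keeping injectivity, by the Remark after Proposition~\ref{unbounded_injectivity}. So I will not transport the wandering domain through a conformal map. Instead I will use the conformal geometry of $U$ only to build a suitable closed set $\tilde F$ with $F \subseteq \mathring{\tilde F}$ and $\tilde F \subseteq U$. I will then run the Arakelian-approximation scheme behind Theorem~\ref{stripWDescaping} directly on $\tilde F$, with a fixed translation $z \mapsto z + c$ playing the role of $\phi$.

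First I reduce the geometry.

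\textbf{Step 1 (disjoint iterates).} Since $U \neq \C$ is simply connected with unbounded components, each component $U_j$ contains arbitrarily thin closed neighbourhoods of $F \cap U_j$. I will choose closed sets $\tilde F_j$ with $F\cap U_j \subseteq \mathring{\tilde F}_j$ and $\tilde F_j \subseteq U_j$. I want them to be uniformly accessible in the sense of Definition~\ref{uniformly_accessible}, with $\mathring{\tilde F}_j$ connected and simply connected; a natural choice is a union of closed hyperbolic disks along a tree in $U_j$ spanning $F\cap U_j$.

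The problem is that these need not lie in a strip, so a translation need not make the iterates $L^n(\tilde F)$ disjoint. To fix this I replace translation by a conformal ``pushing'' map. Let $\psi_j \colon U_j \to \mathbb{H}$ be a Riemann map, chosen so that $\psi_j(\tilde F_j)$ is a closed set whose distance to $\partial \mathbb{H}$ and to the image of infinity is positive. This is possible by thinning $\tilde F_j$ near the boundary, since $F$ is closed in $\C$ and hence stays away from finite boundary points of $U$ locally.

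Next I pick a sequence of disjoint far-away vertical strips $S_n$. Using Proposition~\ref{unbounded_injectivity} on each step, I want a first map $g_0$, approximated by an entire function via Arakelian's theorem, that sends $\tilde F$ injectively into the strip $S$ with bounded-below derivative. Such a map exists on each component, since a closed set lying in a simply connected domain can be unwrapped into a strip by a map that is bi-Lipschitz on it.

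\textbf{Step 2 (escaping dynamics).} From $S$ onward, follow the proof of Theorem~\ref{stripWDescaping} verbatim. This produces $f$ that acts as the translation in the strip, plus the initial leg $g_0$ on $\tilde F$, with points $p_n$ accumulating on $\partial\tilde F$ and sent to an attracting basin. The union of $\tilde F$, the images $f^n(\tilde F)$ and small disks around the $p_n$ must be an Arakelian set; I will check this with Fournodavlos' characterization.

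\textbf{Step 3 (conclusion).} Each component of $\mathring{\tilde F}$ lies in the Fatou set, and its boundary lies in the Julia set. Hence the Fatou component $\Omega$ containing $F_0$ satisfies $F_0 \subseteq \Omega = $ the corresponding component of $\mathring{\tilde F} \subseteq U$. It is escaping, and it is simply connected because it is a component of the interior of a uniformly accessible set.

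The main obstacle is Step 1. I need the initial map carrying a general thin neighbourhood of $F$ inside $U$ into a strip to satisfy the hypotheses of Proposition~\ref{unbounded_injectivity}: $\dist(\tilde F, \partial V) > 0$ and $|\phi'|$ bounded below. At the same time, its image must stay uniformly accessible for the Arakelian condition. I would first try building $\phi$ piecewise along a locally finite tree, as a ``ribbon'' of bounded width. I would use uniform accessibility of the ribbon to keep derivatives comparable.
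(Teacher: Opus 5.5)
\textbf{Genuine gap: Step~1 is impossible in general.} The gap sits exactly at the step you flag as the main obstacle, and it cannot be repaired, because the map $\phi$ you need there does not exist in general. You need $\phi$ to satisfy three things: it is biholomorphic on a neighbourhood of $\tilde F$; $\inf_{\tilde F}\abs{\phi'} \ge c > 0$ (without this, as the Remark after Proposition~\ref{unbounded_injectivity} explains, injectivity cannot be guaranteed under approximation); and $\phi(\tilde F)$ lies in a strip of width $1$.

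Take $F = \set{z \in \C \mid \mathrm{Re}\, z \le 0}$ and $U = \set{z \in \C \mid \mathrm{Re}\, z < 1}$, which the theorem must cover. For every $R > 0$, the open disc of radius $R$ centred at $-R$ lies in $F \subseteq \tilde F$. By Koebe's $1/4$-theorem, its image under $\phi$ contains a disc of radius $cR/4$, which cannot fit in the strip once $R > 2/c$. The same objection applies to any $F$ containing arbitrarily large discs, for instance the set $E$ used for Theorem~\ref{hornWD}, which the paper deduces from this statement. So your assertion that a closed set in a simply connected domain can be unwrapped bi-Lipschitzly into a strip is false.

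There is also an independent obstruction. The forward images must lie in $\C \setminus \Omega$, and this can be a thin neighbourhood of a ray: take $U = \C \setminus (-\infty,0]$ and $F$ the complement of the open $1$-neighbourhood of $(-\infty,0]$. Then the disjoint strips $S_n$ of Theorem~\ref{stripWDescaping} have nowhere to go. Finally, if your scheme worked, it would produce a wandering domain containing a half-plane on which all iterates are univalent. The paper explicitly leaves that question open, which signals that realizing $\mathring{\tilde F}$ \emph{exactly}, with injective dynamics, is the wrong target.

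\textbf{The missing idea.} The statement only asks for $F_0 \subseteq \Omega \subseteq U$, so you need neither exactness nor injectivity. The paper proceeds as follows.
\begin{enumerate}
\item It encloses $F$ in an Arakelian set $E$ with $F \Subset E \subseteq U$, using Proposition~\ref{Arakelian_so_gosti}.
\item Lemma~\ref{separate} gives an unbounded Arakelian set $\Gamma \subseteq U \setminus E$ separating the components of $E$ from one another and from $\C \setminus U$.
\item In a component $W$ of $\C \setminus \Gamma$ that meets $\C \setminus U$, it places a model escaping wandering domain from Lemma~\ref{modelEscaping}, given by a function $h_0$ on an Arakelian set $A$ and containing a disc $B = D(b,\rho)$. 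It also places a disc $D = D(a,r) \subseteq W \setminus A$.
\item Via Proposition~\ref{PosplositevArakelian}, it approximates the function equal to $b$ on $E$, to $a$ on $\Gamma \cup D$, and to $h_0$ on $A$.
\end{enumerate}

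This gives $f(E) \subseteq \mathring{B}$, so every $F_0$ lies in an escaping wandering domain $\Omega$. Since $\Gamma$ is mapped into the attracting basin of $D$, a path in $\Omega$ from $E$ to $\C \setminus U$ would contain a non-escaping point, so $\Omega \subseteq U$. Since the unbounded set $\Gamma$ lies in preimages of that basin, $f$ has an unbounded Fatou component, and $\Omega$ is simply connected by Baker's theorem. Collapsing all of $E$ into a small disc is exactly what removes the derivative and accessibility constraints that block your approach.
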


\begin{theorem}\label{approxWD_oscillating}
    Let $F \subseteq U \subsetneq \C$, where $F$ is a closed set and $U$ is a simply connected open set whose connected components are all unbounded.     
    Then there exists an entire function $f \in \mathcal{O}(\C)$ such that, for every connected component $F_0$ of $F$,
    the function $f$ has a simply connected oscillating wandering domain $\Omega$, such that $F_0 \subseteq \Omega \subseteq U$.
\end{theorem}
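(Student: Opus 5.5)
The strategy is to reduce Theorem~\ref{approxWD_oscillating} to Theorem~\ref{stripWDoscillating} by a conformal change of coordinates.
\begin{enumerate}[(1)]
\item Map each component of $U$ into a strip. Since $U\subsetneq\C$, each connected component $U_j$ of $U$ is a simply connected proper subdomain of $\C$. The Riemann map, composed with a logarithm-type map, gives a biholomorphism $\psi_j\colon U_j\to S_j$ onto a vertical strip. Because $U_j$ is unbounded, we can arrange that a prime end at $\infty$ corresponds to the vertical ends of $S_j$, so that $\psi_j(F\cap U_j)$ lies in a closed sub-strip at positive distance from $\partial S_j$. (The components $U_j$ are placed in pairwise disjoint parallel strips.)
\item Thicken the image. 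In each strip, replace $\psi_j(F\cap U_j)$ by a \emph{uniformly accessible} closed set $G_j$ whose interior contains $\psi_j(F_0)$ for every component $F_0\subseteq F\cap U_j$. One choice is a union of closed vertical sub-strips that are disjoint, one per component; if the components are not separable this way, use a single sub-strip, which is allowed since the statement does not require distinct domains. A strip-like set is uniformly accessible, so Theorem~\ref{stripWDoscillating} applies.
\item Do not pull back directly. Pulling back through $\psi_j^{-1}$ does not produce an entire function. So instead of composing, carry out the construction of Theorem~\ref{stripWDoscillating} inside $U$ itself, using Arakelian approximation (via Fournodavlos' characterization~\cite{ArakelianCharacterization}) on the closed sets $\psi_j^{-1}(G_j^{(n)})$. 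Here $G_j^{(n)}$ are the nested thickenings used in that construction, and $\psi_j^{-1}(G_j^{(n)})$ is closed in $\C$ because $G_j^{(n)}$ stays away from $\partial S_j$.
\item Obtain the wandering domain. The first step maps $\psi^{-1}(G)$ approximately by $\psi$ into a far-away strip, and the later steps are the translations and oscillation from the strip theorem. The component $\Omega$ of the Fatou set containing $F_0$ then satisfies $\Omega\subseteq U$. This holds because the boundary points of $U$ are limits of points $p_n$ that are sent to an attracting basin, exactly as in the introduction's scheme. Simple connectivity of $\Omega$ follows from the univalence of the iterates and the fact that the complement contains the Julia-set pieces accumulating along $\partial U$.
\end{enumerate}

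The main obstacle is the first approximation step. The map $\psi$ is not linear, and as the Remark after Proposition~\ref{unbounded_injectivity} shows, $\abs{\psi'}$ may degenerate along unbounded parts of $F$. This threatens injectivity and Arakelian admissibility. The first thing to try is to exploit that only \emph{approximate} containment is required: shrink the target set to a slightly smaller region $V\subseteq U$ on which $\inf\abs{\psi'}>0$ and $\dist(\cdot,\partial U)>0$, with $F\subseteq V$. Such a $V$ is obtained by exhausting $U$ with a tube-like neighbourhood of $F$ adapted to $\psi$. Proposition~\ref{unbounded_injectivity} then applies with a variable tolerance $\delta(z)$, which Arakelian approximation permits.
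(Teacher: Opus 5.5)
Your reduction breaks at step (1), and the repair in your last paragraph does not work either. A closed sub-strip of $S_j$ at positive distance from $\partial S_j$ is a bounded hyperbolic neighbourhood of a single geodesic. So step (1) requires $F\cap U_j$ to lie within bounded hyperbolic distance of one geodesic of $U_j$. For $U=\{\ima(z)>0\}$, such neighbourhoods are cones $\{\alpha\le\arg(z-x_0)\le\pi-\alpha\}$ or bounded lunes. None of them contains $F=\{\ima(z)\ge 1\}$, whatever Riemann map you choose.

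For the same $F$, Koebe's $1/4$-theorem gives $\abs{\psi'(z)}\dist(z,\partial U)\le 4\dist(\psi(z),\partial S)$, and the right-hand side is bounded. Hence $\inf_F\abs{\psi'}=0$ for every conformal map onto a strip; this is exactly the degeneration in the Remark after Proposition~\ref{unbounded_injectivity}. When $\dist(F,\partial U)=0$ (e.g.\ $F=\{x+i/(1+x^2)\mid x\in\R\}$), no $V\supseteq F$ with $\dist(V,\partial U)>0$ exists at all. Compensating with a tolerance $\delta(z)\to 0$ along unbounded connected pieces is tangential approximation. That needs extra hypotheses on the set and on the decay rate, and it is not what Arakelian's theorem or Proposition~\ref{PosplositevArakelian} provide: their tolerances are only locally constant.

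There is also a room problem. For a wandering domain $\Omega\supseteq F$, the sets $f^n(\Omega)$, $n\ge 1$, lie in $\C\setminus F$. Take $U=\C\setminus(-\infty,0]$ and $F=\{z\mid\dist(z,(-\infty,0])\ge 1\}$: then $\C\setminus F$ is a unit neighbourhood of a ray and contains no strip of any direction. So the translated-strip orbit of Theorem~\ref{stripWDoscillating} in your steps (3)--(4) cannot be placed.

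The idea you are missing is that the statement only asks for a sandwich $F_0\subseteq\Omega\subseteq U$. No injectivity or control of the shape of $\Omega$ is needed, and the paper deliberately collapses $F$:
\begin{enumerate}
\item It thickens $F$ to an Arakelian set $E$ with $F\Subset E\subseteq U$ (Proposition~\ref{Arakelian_so_gosti}).
\item Lemma~\ref{separate} gives an unbounded Arakelian set $\Gamma\subseteq U\setminus E$ separating the components of $E$ from each other and from $\C\setminus U$.
\item In a component $W$ of $\C\setminus\Gamma$ meeting $\C\setminus U$, it places the Eremenko--Lyubich model of Lemma~\ref{modelOscillating} (with its disc $B=D(b,\rho)$) and a disc $D=D(a,r)$.
\item Proposition~\ref{PosplositevArakelian} then approximates $h\equiv b$ on $E$, $h\equiv a$ on $\Gamma\cup D$, and $h=h_0$ on the model, with locally constant errors and interpolation at the points $a_m$.
\end{enumerate}
Because the target is constant on $E$, no derivative lower bounds are needed, and locally constant tolerances suffice. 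Now $f(E)\subseteq\mathring B$ puts $F_0$ in an oscillating wandering domain $\Omega$. Since $f(\Gamma)\subseteq\mathring D$ lies in an attracting basin, a path in $\Omega$ from $E$ to $\C\setminus U$ would have to cross $\Gamma$; hence $\Omega\subseteq U$. Finally, the unbounded Fatou component coming from $\Gamma$ makes $\Omega$ simply connected by Baker's theorem.
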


\begin{remark}
    Notice that in the above theorems, we assume $U \neq \C$. 
    This is a technical assumption ensuring that there is enough room for the orbits of the wandering domains to accumulate at infinity.
    It also excludes the pathological case $F = \C$, which, of course, cannot be contained in a wandering domain.
\end{remark}

Let $F$ be a connected closed set and let $F \subseteq U$ be a connected, simply connected, unbounded, open neighbourhood. 
Applying the above theorems yields an entire function $f \in \mathcal{O}(\C)$ with a simply connected wandering domain $\Omega$ such that $F \subseteq \Omega \subseteq U$. 
The wandering domain $\Omega$ is bounded below by $F$ and above by $U$. 
Since the set $F$ is chosen arbitrarily, we can find a wandering domain $\Omega$ which is arbitrarily close to the set $U$.
It is in this sense that we mean that $U$ can be approximated by wandering domains.

When the set $F$ is bounded, this result is somewhat imprecise. 
We only obtain the existence of a simply connected wandering domain $\Omega$ such that $F \subseteq \Omega \subseteq U$, 
but we do not know whether $\Omega$ is bounded, nor how closely its geometry approximates that of $F$. 
In the bounded setting, however, we already know how to realize wandering domains with prescribed geometry. 
Combining Theorems~\ref{approxWD_escaping} and~\ref{approxWD_oscillating} with the results of~\cite{GeometryOfWD} and~\cite{EremenkoConj},
we obtain the following.

\begin{theorem}\label{combination}
    Let $E \subseteq U \subsetneq \C$, where $E$ is an Arakelian set with locally finite connected components and $U$ is a simply connected open set. 
    Then there exists an entire function $f \in \mathcal{O}(\C)$ such that, for every connected component $E_0$ of $E$, one of the following holds:
    \begin{itemize}
        \item If $E_0$ is bounded, then every connected component of $\mathring{E}_0$ is a simply connected wandering domain of $f$.
        \item If $E_0$ is unbounded, then there exists a simply connected wandering domain $\Omega$ of $f$ such that $E_0 \subseteq \Omega \subseteq U$.
    \end{itemize}
    For each connected component $E_0$, the corresponding wandering domains can be chosen to be either escaping or oscillating.
\end{theorem}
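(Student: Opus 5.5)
The plan is to merge the constructions behind Theorems~\ref{approxWD_escaping}/\ref{approxWD_oscillating} and those of~\cite{GeometryOfWD,EremenkoConj} into one inductive Arakelian approximation scheme. All components of $E$, bounded and unbounded, are treated simultaneously, and each one gets its own escaping or oscillating recipe.

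\emph{Step 1: separate the components.} Local finiteness of the components of $E$ lets me enlarge each component $E_0$ to a closed neighbourhood $N(E_0)\subseteq U$ so that the $N(E_0)$ are pairwise disjoint and still form a locally finite family. By Fournodavlos' characterization~\cite{ArakelianCharacterization}, $E$ is Arakelian, so the $N(E_0)$ can be chosen with Arakelian union: the complement in the Riemann sphere stays connected and locally connected at $\infty$.

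\emph{Step 2: handle the unbounded components.} Let $U_0$ be the component of $U$ containing an unbounded $E_0$; it is simply connected and unbounded. I apply the construction of Theorem~\ref{approxWD_escaping} (or of Theorem~\ref{approxWD_oscillating}) with closed set $E_0$ and open set $U_0$, restricted to a region of the plane reserved for $N(E_0)$. Only the component structure of $F$ and $U$ enters that construction, so it localizes to each component separately.

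\emph{Step 3: handle the bounded components.} A bounded $E_0$ is a compact set, and Arakelian-ness forces $\C\setminus E_0$ to be connected. I apply the scheme of~\cite{EremenkoConj}: choose nested compact neighbourhoods $K_n\searrow E_0$ and points $p_n\in\mathring K_{n-1}\setminus K_n$ accumulating on $\partial E_0$. The map is asked to act as a translation on $f^{n}(K_n)$ and to send $f^{n-1}(p_n)$ into an attracting basin. This makes every component of $\mathring E_0$ a wandering domain. Each such component is simply connected, since the complement of $E_0$ is connected.

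\emph{Step 4: make the orbits disjoint.} I give each component $E_0$ its own disjoint "corridor" for its forward orbit. One option is to send the orbits of distinct components into distinct horizontal strips far to the right; another is to use translations by distinct amounts, with the strips placed in $\C\setminus \overline U$ where possible, or far away in any case. I then run a single inductive approximation. At step $n$ I prescribe $f_n$ on the union of
\begin{itemize}
\item the finitely many components that are already active, together with their first $n$ images, and
\item one attracting basin near the origin,
\end{itemize}
and approximate by Arakelian's theorem with errors $\varepsilon_n\to0$ summable. Proposition~\ref{unbounded_injectivity} keeps injectivity on the translated images, because those maps are linear. The components are activated one by one in order of their distance to $0$, which local finiteness makes possible. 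The limit $f$ then realizes both bullet points. The choice between escaping and oscillating is made independently for each component, as in the separate theorems. For the oscillating choice, an orbit is occasionally brought back near a fixed bounded region in a shrinking neighbourhood, as in~\cite{EremenkoConj}.

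\emph{Main obstacle.} The hardest step is keeping the approximation set Arakelian at every stage, now that it contains infinitely many unbounded pieces together with their unbounded images. Its complement must remain connected and locally connected at $\infty$. At the same time the separate constructions must not interact: images of one component must not enter the wandering domains or basins of another, and the Julia set points produced for one component must not be swallowed. I would handle this by placing all images inside disjoint strips that escape to infinity, with gaps growing with $n$. Arakelian-ness can then be checked with Fournodavlos' criterion.
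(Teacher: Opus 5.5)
\textbf{The treatment of unbounded components fails as written.} In Step 4 and in your ``main obstacle'' you keep each unbounded component, together with its unbounded images, in the approximation set. You then move it by near-linear maps, relying on Proposition~\ref{unbounded_injectivity}. For a general unbounded Arakelian component no such dynamics exists. Take $E_0=(-\infty,0]\times\R\subseteq U=(-\infty,1)\times\R$. Suppose $f$ is uniformly close to non-constant linear maps on neighbourhoods of $E_0$ and of $f(E_0)$. Then Rouch\'e's theorem shows that $f(E_0)$ and $f^2(E_0)$ each contain a half-plane. Among any three half-planes two intersect, so $E_0,f(E_0),f^2(E_0)$ cannot lie in pairwise disjoint Fatou components. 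This is exactly why Theorems~\ref{stripWDescaping} and~\ref{stripWDoscillating} are restricted to strips and uniformly accessible sets.

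For unbounded components the statement only asks for $E_0\subseteq\Omega\subseteq U$. The paper uses this to \emph{collapse} each unbounded component: it sets $h=b_n$ on $E_n$. Then $f(E_n)$ falls into the small disc $B_n$ of a robust escaping or oscillating model (Lemma~\ref{modelEscaping} or Lemma~\ref{modelOscillating}), and no injectivity on unbounded sets is ever needed. Your Step 2 implicitly does this, but Step 4 contradicts it.

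\textbf{The second missing idea is what confines $\Omega$ to $U$} and keeps components with different prescribed behaviour in different Fatou components. The unified scheme of Step 4 contains no set playing the role of the separating set, so nothing stops $\Omega$ from leaving $U$.

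The construction of Theorem~\ref{approxWD_escaping} also does not ``localize'' to one component. A separating set built for $E_0$ and $U_0$ lies in $U_0\setminus E_0$, which may contain other components of $E$. Moreover, its model set and basin disc sit in a component of the complement of that separating set meeting $\C\setminus U_0$, not in a region reserved for $N(E_0)$. Your corridors (``strips far to the right'', or $\C\setminus\overline U$) need not exist either: $\C\setminus\overline U$ can be empty, and distant regions can belong to $E$.

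The paper applies Lemma~\ref{separate} once to all of $E$ and $U$. This gives a single unbounded Arakelian $\Gamma\subseteq U\setminus E$, mapped into an attracting basin. Every path between two components of $E$, or from $E$ to $\C\setminus U$, must cross $\Gamma$. The models are then placed as follows:
\begin{itemize}
\item for a bounded $E_n$, the model of Lemma~\ref{PreciseShape} goes in the component $V_n$ of $\C\setminus\Gamma$ containing $E_n$;
\item for an unbounded $E_n$, the model goes in one member of a locally finite family of disjoint unbounded sets $W_n$, all inside a component $W$ of $\C\setminus\Gamma$ meeting $\C\setminus U$.
\end{itemize}

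Each model is stable under any perturbation below a locally constant error. So one application of Proposition~\ref{PosplositevArakelian} finishes the proof, with interpolation at the points $a^n_m$ in the oscillating case. It is applied on $E\cup\Gamma\cup\bigcup_n A_n\cup D$, which is Arakelian by Proposition~\ref{locallyfiniteArakelian}. There is no induction over components, so the stage-by-stage Arakelian bookkeeping you flag as the main obstacle never arises. The fact that the unbounded set $\Gamma$ lies in the preimage of a basin is also what the paper uses to get simple connectivity of $\Omega$ for unbounded components. Your scheme does not address that point.
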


As an immediate consequence, we obtain the following corollary,
which removes the unboundedness assumption on the components of $U$ in the statements of Theorems~\ref{approxWD_escaping} and~\ref{approxWD_oscillating}.

\begin{corollary}
    Let $F \subseteq U \subsetneq \C$, where $F$ is a closed set and $U$ is a simply connected open set.     
    Then there exists an entire function $f \in \mathcal{O}(\C)$ such that, for every connected component $F_0$ of $F$,
    the function $f$ has a simply connected wandering domain $\Omega$, such that $F_0 \subseteq \Omega \subseteq U$.
    This wandering domain can be chosen to be either escaping or oscillating.
\end{corollary}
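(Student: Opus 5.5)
The corollary should follow from Theorem~\ref{combination} once we build an Arakelian set $E$ from $F$ and $U$. We split $U$ into its bounded and unbounded connected components and treat the corresponding parts of $F$ separately.

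\textbf{Bounded components.} Let $V$ be a bounded component of $U$. Since $F$ is closed and $V$ is simply connected, $F\cap V$ need not be compact: it can accumulate on $\partial V$. This is the main obstacle. Each connected component $F_0 \subseteq V$ of $F$ is closed in $\C$ and bounded, hence compact. Only such a compact $F_0$ needs to sit inside a single wandering domain. So first take the union of all components of $F$ contained in $V$. If this union accumulates at $\partial V$, we cannot simply enclose it in one compact set. I would instead cover each compact component $F_0$ separately. Choose a compact, full, regular set $E_{F_0}$ with $F_0 \subseteq \mathring{E}_{F_0} \subseteq E_{F_0}\subseteq V$ and $\mathring{E}_{F_0}$ connected, for instance a closed Jordan domain obtained from a Riemann map of $V$. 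Arrange these sets so that distinct ones are disjoint, or else merge them.

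To make the family locally finite in $\C$, note that $V$ is bounded, so only finitely many of the $E_{F_0}$ may meet any compact set. The trick is to group components. Exhaust $V$ by closed Jordan domains $D_1\subset D_2\subset\cdots$ and let $A_k$ be the annular region $D_{k+1}\setminus \mathring D_k$. We want to place all components of $F$ near a given ring inside finitely many pieces. Since infinitely many $F_0$ could lie in a fixed ring, I would instead use the theorem's own tolerance: the target $\Omega$ for $F_0$ need only satisfy $F_0\subseteq\Omega\subseteq U$. A single piece $E_0$ can therefore contain many components of $F$ at once. For bounded $V$, take $E_0 = D_k$ for large $k$, containing $F\cap D_{k-1}$. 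Then Theorem~\ref{combination} makes each component of $\mathring D_k$ a wandering domain, and the interior component containing $F_0$ works.

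The residual issue is components of $F$ accumulating on $\partial V$ that lie in no fixed $D_k$. For such components the choice of $k$ depends on $F_0$. This conflicts with needing one set $E$, so it is the main obstacle, and I would try to resolve it as follows. By the Arakelian characterization, any closed family of pairwise disjoint compact full sets that is locally finite in $\C$ is Arakelian. A family accumulating on $\partial V$ is not locally finite. I would therefore use a single connected set $E_V$ covering all of $F\cap V$. Applying Lemma-type thickening within $V$ (a conformal map to $\D$ and radial shells), take $E_V$ to be a closed neighbourhood of $F\cap V$ inside $V$ whose complement in $\C$ is connected and locally connected at infinity. It is compact only after adding boundary points, which we cannot do. Realistically, then, the intended argument is to treat the bounded components of $U$ via Theorems~\ref{approxWD_escaping} and~\ref{approxWD_oscillating} rather than via compactness. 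Enlarge $U$ by attaching to each bounded component $V$ a thin unbounded tentacle in $\C\setminus \overline{U}$. This gives a simply connected $U'\supseteq U$, not equal to $\C$, whose components are all unbounded. Apply those theorems to $F\subseteq U'$ with the tentacles chosen disjoint from the closure of $F$.

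\textbf{Unbounded components and the tentacles.} The difficulty now is that the resulting $\Omega$ satisfies only $\Omega\subseteq U'$, not $\Omega\subseteq U$. To fix this, we apply Theorem~\ref{combination} with $E$ equal to the union of $F$ restricted to unbounded components of $U$ and suitable compact full neighbourhoods for the bounded ones. This requires the bounded part to be locally finite, which I expect to handle by the grouping $D_k$ above together with the observation that each bounded component $V$ contributes a compact subset. Finally, choose escaping or oscillating uniformly as Theorem~\ref{combination} allows.
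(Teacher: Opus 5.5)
Your proposal has a genuine gap, and much of it rests on a false premise. You claim that for a bounded component $V$ of $U$ the set $F\cap V$ may accumulate on $\partial V$. It cannot. A boundary point of a component of an open set never lies in that open set, so $\partial V\subseteq\C\setminus U$, while $F$ is closed in $\C$ and contained in $U$. Hence $F\cap V=F\cap\overline V$ is compact. The same reasoning shows that the sets $F\cap V$, with $V$ ranging over the components of $U$, form a locally finite family: an accumulation point of $F$ lies in $F\subseteq U$, hence in a single open component. So the exhaustion by Jordan domains, the grouping and the tentacles all address a non-issue. The tentacle construction is in any case not available in general, since $\C\setminus\overline U$ may be empty (take $U=\C\setminus\bigl((\Z\times\R)\cup(\R\times\Z)\bigr)$). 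As you note yourself, it would also only give $\Omega\subseteq U'$.

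The decisive step, the final application of Theorem~\ref{combination}, fails as written because your $E$ violates its hypotheses. The part of $E$ coming from the unbounded components of $U$ is $F$ itself, and this can fail in three ways:
\begin{itemize}
\item It need not be Arakelian: the unit circle inside $U=\C\setminus[2,\infty)$ has a hole.
\item It need not have locally finite components: take $C\times\R$ with $C\subseteq[0,1]$ a Cantor set, inside the half-plane $(-1,\infty)\times\R$.
\item Even when both conditions hold, for a bounded component $E_0$ the theorem only produces the components of $\mathring E_0$. So a bounded component with empty interior, such as the segment $[0,1]$ in $\C\setminus[2,\infty)$, gets no wandering domain at all.
\end{itemize}
You also leave open how to choose the compact neighbourhoods of the sets $F\cap V$ so that, together with the rest, they form a locally finite family.

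The missing idea is to thicken all of $F$ at once. Proposition~\ref{Arakelian_so_gosti} applied to $F\subseteq U$ gives an Arakelian set $E$ with locally finite connected components and $F\Subset E\subseteq U$, with no case distinction on the components of $U$. Theorem~\ref{combination} then gives the corollary directly. A component $F_0$ of $F$ lies in a component $E_0$ of $E$. Since $F_0$ is connected and contained in $\mathring E$, it lies in a single component of $\mathring E_0$: the component of $\mathring E$ containing $F_0$ is a connected open subset of $E$ meeting $E_0$, hence contained in $\mathring E_0$.
\begin{itemize}
\item If $E_0$ is bounded, that component is a simply connected wandering domain contained in $E_0\subseteq U$.
\item If $E_0$ is unbounded, the theorem gives $\Omega$ with $F_0\subseteq E_0\subseteq\Omega\subseteq U$.
\end{itemize}
In both cases the type can be chosen escaping or oscillating. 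This is the immediate deduction the paper has in mind.
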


As another immediate consequence, we obtain the existence of wandering domains whose complements have arbitrarily small area. 
This shows that, perhaps somewhat unintuitively, a single wandering domain can occupy most of the complex plane.

\begin{theorem}\label{hornWD}
    For each $\delta > 0$ there exists an entire function $f \in \mathcal{O}(\C)$ with a wandering domain $\Omega$ such that $\Area(\C \setminus \Omega) < \delta$.
\end{theorem}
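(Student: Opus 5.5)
Theorem~\ref{hornWD} will follow from Theorem~\ref{approxWD_escaping}, applied with a closed set $F$ and a simply connected open set $U \subsetneq \C$ chosen so that $\C \setminus F$ has small area. The resulting wandering domain $\Omega$ satisfies $F \subseteq \Omega$. Hence $\C \setminus \Omega \subseteq \C \setminus F$ and $\Area(\C \setminus \Omega) \le \Area(\C\setminus F)$.

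The plan is to remove from $\C$ a thin closed "horn" along a ray. Fix $\delta > 0$ and define the set
\[ H = \set{x+iy \in \C \mid x \ge 0,\ \abs{y} \le h(x)}, \]
where $h \colon [0,\infty) \to (0,\infty)$ is continuous and integrable, for example $h(x) = \frac{\delta}{8(1+x^2)}$. Then
\[ \Area(H) = \int_0^\infty 2h(x)\,dx = \frac{\pi\delta}{8} < \delta. \]
We take $U = \C \setminus [0,\infty)$. This set is open, connected, simply connected (it is a slit plane, the image of a half-plane under $z\mapsto z^2$ up to rotation), unbounded, and not equal to $\C$. We take $F = \C \setminus \mathring{H}$. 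This set is closed and connected, and it is contained in $U$ because the positive real axis lies in $\mathring{H}$. Indeed, for $x>0$ we have $h(x)>0$, and at $x=0$ the point $0$ is interior only if we shift the horn slightly. To fix this, let the horn start at $x=-1$ with $h$ defined on $[-1,\infty)$, so that $[0,\infty) \subseteq \mathring{H}$. Then $\Area(\C\setminus F) = \Area(H) < \delta$.

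Applying Theorem~\ref{approxWD_escaping} to the single connected component $F_0 = F$ gives an entire function $f$ with a simply connected escaping wandering domain $\Omega$ such that $F \subseteq \Omega \subseteq U$. Therefore
\[ \Area(\C\setminus\Omega) \le \Area(H) < \delta. \]
Theorem~\ref{approxWD_oscillating} would equally give an oscillating example.

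The only point requiring care is checking the hypotheses of Theorem~\ref{approxWD_escaping}: $F$ must be closed, $U$ must be simply connected with all components unbounded, and $U\ne\C$. For this it suffices to make sure that the slit $[0,\infty)$ lies strictly inside the horn, so that $F \subseteq U$. This is the main, mild obstacle, and it is handled by the choice of $h$ positive on a neighbourhood of the slit.
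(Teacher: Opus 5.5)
Your proof is correct, but it is not the proof the paper gives. You derive the result as a corollary of Theorem~\ref{approxWD_escaping} with $U=\C\setminus[0,\infty)$ and $F=\C\setminus\mathring{H}$, and the hypotheses do hold:
\begin{itemize}
\item $F$ is closed and connected;
\item $[0,\infty)\subseteq\mathring{H}$ once the horn starts at $x=-1$, so $F\subseteq U$;
\item $U$ is a connected, simply connected, unbounded proper subset of $\C$.
\end{itemize}
There is no circularity, since Section~\ref{sec6} never uses Theorem~\ref{hornWD}. The inclusion $\C\setminus\Omega\subseteq\mathring{H}$ then gives the area bound. After shifting the horn you should recompute the area: $\Area(H)=\frac{\delta}{4}\int_{-1}^{\infty}\frac{dx}{1+x^2}=\frac{3\pi\delta}{16}$, which is still less than $\delta$.

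The paper itself remarks that the theorem follows immediately from Theorems~\ref{approxWD_escaping} and~\ref{approxWD_oscillating}, but it deliberately proves it directly.
\begin{itemize}
\item It takes the explicit closed set $E=(-\infty,1]\times\R\cup\set{(x,y)\in[1,\infty)\times\R \mid g_1(x)\le\abs{y}}$, whose complement is a thin horn.
\item Inside the horn it places nested horn-shaped curves $\gamma_k$, built from $g_k(x)=\frac{\delta}{2}x^{-(k+1)}$, and discs $D_k=D(k+\frac12,r_k)$.
\item It applies Proposition~\ref{PosplositevArakelian} to a locally constant model sending $E$ into $D_2$, each $D_k$ with $k\ge2$ into $D_{k+1}$, and $D_1$ together with every $\gamma_k$ into $D_1$.
\end{itemize}
The discs escape uniformly, while $D_1$ lies in an attracting basin. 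Any path joining $D_n$ to $E$ or to $D_m$ with $m<n$ must cross $\gamma_n$, which is mapped into that basin. Hence the escaping Fatou components containing $\mathring{E},\mathring{D}_2,\mathring{D}_3,\dots$ are pairwise distinct, and $\mathring{E}$ lies in a wandering domain.

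Your route is shorter and gives an oscillating example just as easily. However, it relies on all the machinery of Section~\ref{sec6}: Proposition~\ref{Arakelian_so_gosti}, Lemma~\ref{separate}, the Riemann-map construction of Lemma~\ref{curves_and_discs}, and Theorem~\ref{characterization}. The paper's direct proof needs only Propositions~\ref{locallyfiniteArakelian} and~\ref{PosplositevArakelian}, applied to sets that are visibly Arakelian. It is also meant as a transparent model of the separating-set mechanism that Section~\ref{sec6} later generalizes.
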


Using Theorems~\ref{approxWD_escaping} and~\ref{approxWD_oscillating}, we can obtain wandering domains contained in strips whose closures are not uniformly accessible,
hence the uniform accessibility assumption in Theorems~\ref{stripWDescaping} and~\ref{stripWDoscillating} is sufficient but not necessary. 
Similarly, many open sets containing a half-plane can arise as wandering domains without being lifts of sets via the exponential map. 
This raises the following question.

\begin{question}
    Denote the strip $S = [0,1] \times \R \subseteq \C$ and let $F \subseteq S$ be a closed set without holes such that $\dist(F, \partial S) > 0$.
    Does there always exist an entire function $f \in \mathcal{O}(\C)$ that has the connected components of $\mathring{F}$ as wandering domains?
    More generally, can the connected components of the interior of any closed set without holes be realized as wandering domain of some entire function?
\end{question}

Another question one might consider is the univalence of the iterates on unbounded wandering domains. 
Notice that Theorems~\ref{stripWDescaping} and~\ref{stripWDoscillating} produce an entire function whose iterates are univalent on the wandering domain.
On the other hand, a function we obtain via Theorem~\ref{liftWD} is $\infty$-to-$1$ on the unbounded wandering domain and injective on the other sets of the forward orbit of the wandering domain. 
Is it possible to realize an unbounded open set containing a half-plane as a wandering domain of some entire function such that the iterates of that function are univalent on this wandering domain?
In particular, this question can be asked for a half-plane itself.

\begin{question}
    Does there exist an entire function with a half-plane as a wandering domain whose iterates are univalent on that wandering domain?
\end{question}

\subsection*{Structure of the paper}

In Section~\ref{sec2}, we recall some basic notions of Arakelian approximation and prove several lemmas used throughout the paper.
In Section~\ref{sec3}, we introduce the notion of uniformly accessible closed sets and prove Theorems~\ref{stripWDescaping} and~\ref{stripWDoscillating}.
Section~\ref{sec4} is devoted to the proof of Theorem~\ref{liftWD}, while Section~\ref{sec5} contains the proof of Theorem~\ref{hornWD}.
Although the latter result is an immediate consequence of Theorems~\ref{approxWD_escaping} and~\ref{approxWD_oscillating},
proving Theorem~\ref{hornWD} directly will demonstrate the main idea behind the proofs of those theorems.
Finally, in Section~\ref{sec6}, we prove Theorems~\ref{approxWD_escaping},~\ref{approxWD_oscillating} and~\ref{combination}.

\subsection*{Notation}

Let $\Z$ denote the set of integers, $\N$ the set of positive integers, $\C$ the complex plane, and $\widehat{\C} = \C \cup \set{\infty}$ the one-point compactification of the complex plane. 

Given a point $z \in \C$ and $r > 0$, let $D(z,r)$ denote the closed disc centered at $z$ of radius $r$.
We also denote the distance between a point $z \in \C$ and a set $A \subseteq \C$ by 
\[\dist(z,A) = \inf_{w \in A} \abs{z-w},\]
and the distance between two sets $A,B \subseteq \C$ by
\[\dist(A,B) = \inf_{z \in A} \dist(z, B) = \inf \set{\abs{z-w} \mid z \in A,\, w \in B}.\]
For a set $A \subseteq \C$ and $r > 0$, we denote the \emph{$r$-neighbourhood of $A$} by
\[A[r] = \set{z \in \C \mid \dist(z,A) < r}.\] 

Given a set $A \subseteq \C$, we let $\mathring{A}$, $\partial A$, and $\overline{A}$ denote the topological interior, boundary, and closure of $A$, respectively.
Given two sets $A, B \subseteq \C$, we write $A \Subset B$ to mean $\overline{A} \subseteq \mathring{B}$. 
Furthermore, a family of sets $\set{A_j}_{j \in J}$ is said to be \emph{locally finite} if every point in $\C$ has an open neighbourhood that intersects only finitely many members of the family.

An unbounded simple curve is an continuous injective function $\gamma \colon [0,1) \to \C$ such that $\lim_{t \to 1}\gamma(t) = \infty$.
By a slight abuse of notation, we use $\gamma$ to denote both the function itself and its image in the complex plane, that is $\gamma = \gamma([0,1))$. 

For an open set $U \subseteq \C$, let $\mathcal{O}(U)$ denote the class of holomorphic functions on $U$.
More generally, for a set $A \subseteq \C$, let $\mathcal{O}(A)$ denote the class of holomorphic functions that are defined on some open neighbourhood of $A$.
Given a set $A \subseteq \C$ and $f \in \mathcal{O}(A)$, we define 
\[\norm{f}_A = \sup_{z \in A}\abs{f(z)}.\]
If $K \subseteq \C$ is a compact set, then $\norm{f}_K$ is just the usual supremum norm of $f \in \mathcal{O}(K)$.

Finally, it will be convenient to generalize the notion of simple connectedness to sets with multiple connected components.
We say that an open set is simply connected if each of its connected components is simply connected in the usual sense.
Under this convention, an open set $U$ is simply connected if and only if the set $\widehat{\C} \setminus U$ is connected, 
or equivalently, every connected component of $\C \setminus U$ is unbounded.

\subsection*{Acknowledgements}

I would like to sincerely thank Luka Boc Thaler for encouraging me to look into the geometry of unbounded wandering domains and for all the helpful discussions that followed. 
I would also like to thank Xavier Jarque for suggesting that I investigate wandering domains with complements of small area.

\section{Arakelian approximation} \label{sec2}

As mentioned in the introduction, the main tool in our constructions will be the Arakelian approximation theorem. 
In this section, we recall some basic notions of Arakelian approximation and prove several lemmas that will be used later. 
For a general overview of complex approximation theory, see the survey article~\cite{LegacyOfWeirstrass}.

Let $E \subseteq \C$ be a closed set. A bounded component of the complement $\C \setminus E$ is called a \emph{hole} of the set $E$.
As such, we say that the set $E$ has no holes if all the components of the complement $\C \setminus E$ are unbounded.
It is easy to see that $E$ has no holes precisely when the set $\widehat{\C} \setminus E$ is connected.
We write $h(E)$ to denote the union of the holes of the set $E$.
Notice that if $K \subseteq \C$ is a compact set, then the set $K$ has no holes, precisely when it is polynomially convex.
This is an immediate consequence of the Runge approximation theorem.

Let $E \subseteq \C$ be a closed set. We say that $E$ has the \emph{bounded exhaustion hull property}, or the \emph{BEH property}, 
if for every compact set $K \subseteq \C$ the set $h(E \cup K)$ is bounded. 
To verify the BEH property, it suffices to test the condition only for closed discs or for the elements of a compact exhaustion of $\C$.
Furthermore, with a little work, one can show that the set $E$ has the BEH property precisely when the set $\widehat{\C} \setminus E$ is locally connected at $\infty$.
Given a compact set $K$ and a closed set $E$ with the BEH property, it is easy to see that both sets $E \cup K$ and $E \cup h(E)$ have the BEH property. 
Notice that compact sets always enjoy the BEH property. For a non-example, consider the set, depicted in Figure~\ref{fig1}, given by
\[E = \set{0} \times [0,\infty) \cup \bigcup_{n \ge 1} \left(\set{\frac{1}{2^{n-1}}} \times [0,n] \cup \left[\frac{1}{2^n}, \frac{1}{2^{n-1}}\right] \times \set{n}\right).\]

\begin{figure}[H]
    \centering
    \includegraphics[width=0.44\textwidth]{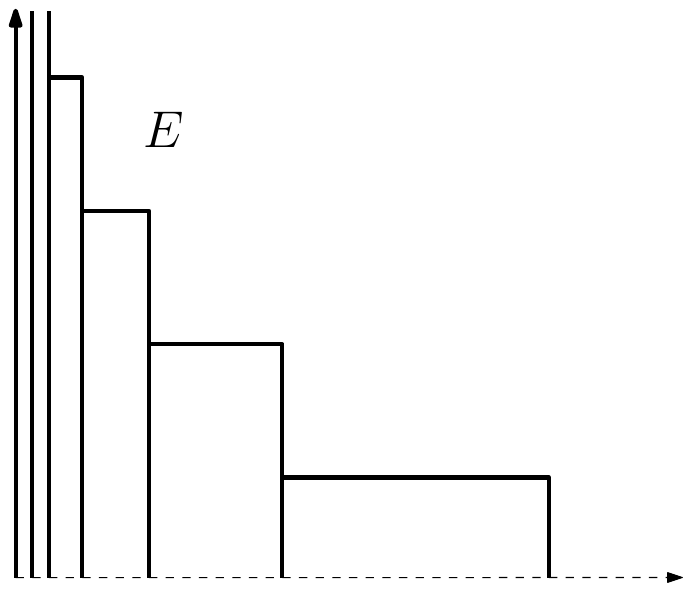}
    \caption{An example of a closed set $E$ that does not have the BEH property. 
    Indeed, given a closed disc $D$ centered at the origin, one can quickly verify that the set $h(E \cup D)$ is unbounded.}
    \label{fig1}  
\end{figure}

We say that a closed set $E \subseteq \C$ is \emph{Arakelian} if the set $E$ has no holes and enjoys the BEH property.
As noted above, this is equivalent to the set $\widehat{\C} \setminus E$ being connected and locally connected at $\infty$.
Arakelian showed that on such sets every holomorphic function can be approximated uniformly by entire functions. 
This result is known as the \emph{Arakelian approximation theorem}. 
For a proof, see the original paper by Arakelian~\cite{Arakelian} or~\cite[Theorem 10]{LegacyOfWeirstrass} for a modern reference.

\begin{theorem}\label{classical}
    Let $E \subseteq \C$ be an Arakelian set. Then for any holomorphic function $h \in \mathcal{O}(E)$ and $\varepsilon > 0$
    there exists an entire function $f \in \mathcal{O}(\C)$ such that 
    \[\norm{f-h}_{E} < \varepsilon.\]
\end{theorem}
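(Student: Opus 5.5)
The plan is to follow the classical route: uniform approximation by entire functions on $E$, built from a Runge-type approximation on an exhaustion of $E$. Fix a holomorphic $h$ on an open neighbourhood $V$ of $E$ and $\varepsilon > 0$. First I replace $E$ by a thickened closed set $E' \subseteq V$ with $E \subseteq \mathring{E}'$, chosen so that $E'$ is still Arakelian. To build it, take a locally finite union of small closed discs centered on $E$ with radii tending to $0$ slowly. Then fill the holes; since $E$ has the BEH property, the added holes are controlled. This lets me apply Cauchy estimates and work with functions that are holomorphic on a fixed neighbourhood.

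Next I take an exhaustion by closed discs $D_n = D(0,R_n)$ with $R_n \to \infty$. Set $K_n = E \cup D_n$ and $\widehat{K}_n = K_n \cup h(K_n)$. By the BEH property each $h(K_n)$ is bounded, so $\widehat{K}_n$ is closed and differs from $E \cup D_n$ only inside a bounded region. Since $E$ has no holes, every hole of $E \cup D_n$ must meet $D_n$. Because $h(E\cup D_n)$ is bounded, I can choose $R_{n+1}$ so large that $h(K_n) \subseteq D_{n+1}$. I then construct entire functions $f_n$ inductively so that
\[\norm{f_n - h}_{E} < \varepsilon(1-2^{-n}), \qquad \norm{f_{n+1} - f_n}_{D_n} < \varepsilon 2^{-n-1}.\]
The limit $f$ of the $f_n$ is then entire, being a locally uniform limit, and satisfies $\norm{f - h}_E \le \varepsilon$.

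The inductive step is the heart of the argument. I define a function $g$ on a neighbourhood of the compact polynomially convex set $L_n = D_n \cup h(K_n) \cup (E \cap D_{n+1})$. On the part near $D_n \cup h(K_n)$ set $g = f_n$, and on the part near $E \cap (D_{n+1}\setminus D_n')$ set $g = h$, where $D_n' \supset D_n \cup h(K_n)$ is slightly larger. Two pieces are needed for this. First, the transition zone must be handled, since on $E \cap (D_n' \setminus D_n)$ both $f_n$ and $h$ are nearly equal. This can be done with a $\bar\partial$-correction, in the spirit of Arakelian's original proof: glue with a smooth cutoff $\chi$. The error $(f_n - h)\bar\partial\chi$ is small because $f_n \approx h$ on $E$. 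Second, I solve $\bar\partial u = (f_n-h)\bar\partial\chi$ with a small solution via the Cauchy transform over a bounded region. Then Runge's theorem applies on the polynomially convex set $L_n$ to approximate the corrected function by a polynomial. The sets $L_n$ have connected complement precisely because the holes are absorbed and $E$ has no holes.

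The main obstacle is keeping the approximation on $E \setminus D_{n+1}$ under control. Runge approximation on $L_n$ says nothing there, yet later steps must not destroy $\norm{f_n-h}_E$ on the unbounded tail. To fix this, I would instead use the standard Arakelian scheme. Approximate $h$ on $E$ by functions of the form $h + $ (Cauchy integrals over bounded regions). Each correction term is small on all of $E$, not just on compact pieces. This works because the BEH property gives a uniform gap: points of $E$ far out are at controlled distance from the region where the cutoff varies, and the Cauchy kernel decays there. Concretely, at step $n$ the correction $u_n$ is a Cauchy transform of a compactly supported density of small $L^1$ mass. So $\abs{u_n}$ is small on the whole of $\C$ away from its support. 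With the summable choice $\varepsilon 2^{-n}$ for the densities, the series converges uniformly on $E$, and this yields the theorem.
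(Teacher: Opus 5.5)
The paper gives no proof of this statement. It is Arakelian's theorem, quoted from Arakelian's paper and the cited survey, and used as a black box. Judged on its own, your proposal has a genuine gap.

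\textbf{Where it fails.} Your inductive invariant $\norm{f_n-h}_E<\varepsilon(1-2^{-n})$ asks for control on the whole unbounded set $E$ already at $n=1$. That is the theorem itself, so the induction cannot start, and, as you notice, a Runge step on the compact set $L_n$ cannot propagate it.

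The remedy in your last paragraph does not repair this:
\begin{itemize}
\item Cauchy transforms of compactly supported densities are holomorphic only off their supports. Since $h$ lives only near $E$, ``$h$ plus Cauchy integrals'' is not even defined on all of $\C$, let alone entire.
\item The smallness of such transforms far from a bounded support is automatic. It has nothing to do with the BEH property, which is purely topological and gives no metric gap.
\item The object that is actually uncontrolled on the tail is the Runge polynomial, and that paragraph says nothing about it.
\item Under your invariant, $f_n$ is only $\varepsilon$-close to $h$ on the transition zone. So the densities $(f_n-h)\bar\partial\chi$ are not small, and the corrections $u_n$ would not be summable.
\end{itemize}

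\textbf{How to close it.} The obstacle you identify is illusory. At step $n$ you need no control on the far part of $E$, because at the next step the function being approximated is prescribed to be $h$ there again. Only the compact transition zones require $f_n\approx h$. Your first scheme then works with the following bookkeeping.
\begin{itemize}
\item Choose radii $R_n\nearrow\infty$ such that the closed holes of $E\cup D_{R_n}$, where $D_{R_n}=D(0,R_n)$, lie in $\mathring{D}_{R_{n+1}}$. Let $A_n$ be $D_{R_n}$ together with these closed holes.
\item Fix all cutoffs in advance: $\chi_n=1$ near $A_n$ and $\operatorname{supp}\chi_n\subseteq\mathring{D}_{R_{n+1}}$. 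Start with $\chi_0\equiv 0$, $g_0=h$ and $L_0=E\cap D_{R_2}$.
\item Let $u_n$ be the Cauchy transform of $\psi_n(f_n-h)\bar\partial\chi_n$. Here $\psi_n=1$ near $E\cap\operatorname{supp}\bar\partial\chi_n$, and $\psi_n$ is supported where $h$ is defined and the previous Runge estimate holds.
\item Set $g_n=\chi_nf_n+(1-\chi_n)h-u_n$ near $E\cap D_{R_{n+2}}$ and $g_n=f_n-u_n$ near $A_n$. This is holomorphic near $L_n=A_n\cup(E\cap D_{R_{n+2}})$, which is polynomially convex by your hole-absorption argument.
\item Runge gives a polynomial $f_{n+1}$ with $\abs{f_{n+1}-g_n}<\delta_n$ on a neighbourhood of $L_n$.
\end{itemize}

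\textbf{Why the errors are summable.} Outside $\operatorname{supp}\chi_n\subseteq A_{n+1}$ we have $g_n=h-u_n$, so $\abs{f_{n+1}-h}<\delta_n+\norm{u_n}_\infty$ on the $E$-part of the next transition zone. Hence
$\norm{u_{n+1}}_\infty\le 2R_{n+2}(\delta_n+\norm{u_n}_\infty)\norm{\bar\partial\chi_{n+1}}_\infty$,
which is summable for small $\delta_n$, because the $\chi_n$ were fixed in advance.

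\textbf{Conclusion of the argument.} On $D_{R_k}$ we have $\abs{f_{k+1}-f_k}\le\delta_k+\norm{u_k}_\infty$, so the $f_k$ converge locally uniformly to an entire $f$. For $z\in E\cap D_{R_{k+2}}$,
$\abs{f_{k+1}(z)-h(z)}\le\delta_k+\norm{u_k}_\infty+\chi_k(z)\abs{f_k(z)-h(z)}$,
and $\chi_k(z)=0$ for the first such $k$. Therefore $\abs{f(z)-h(z)}\le\sum_k(\delta_k+\norm{u_k}_\infty)<\varepsilon$.

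This needs neither tail decay of Cauchy kernels nor your thickening step. The thickening step, as justified, is shaky anyway, since filling the holes of a union of discs can leave $V$.
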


This result is sharp in the following sense. 
A closed set $E \subseteq \C$ is called a set of \emph{uniform approximation} if the conclusion of the above theorem holds, 
namely that for any holomorphic function $h \in \mathcal{O}(E)$ and $\varepsilon > 0$ there exists an entire function $f \in \mathcal{O}(\C)$ such that $\norm{f-h}_{E} < \varepsilon$.
It turns out that a set of uniform approximation must be without holes and have the BEH property, see~\cite{Gauthier_Hengartner}.
Hence, Arakelian sets are precisely the sets of uniform approximation in the complex plane.

Recently, Fournodavlos gave another characterization of Arakelian sets.
In~\cite{ArakelianCharacterization}, he showed that a closed set is Arakelian if and only if it has a neighbourhood basis of simply connected open sets.
This characterization is particularly useful for proving statements involving Arakelian sets, although it is less convenient for checking whether a given set actually is Arakelian.  
Nevertheless, we will make frequent use of it throughout the paper.

\begin{theorem}\label{characterization}
    A closed set $E \subseteq \C$ is Arakelian if and only if for each open neighbourhood $V$ of $E$ there exists a simply connected open set $U$ such that $E \subseteq U \subseteq V$.
\end{theorem}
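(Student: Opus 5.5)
The two implications are of different nature. I would prove \emph{Arakelian $\Rightarrow$ simply connected neighbourhood basis} by an explicit construction, cutting channels to $\infty$, and the converse by a limiting argument on complementary continua.

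\textbf{($\Rightarrow$).} Let $E$ be Arakelian and $V \supseteq E$ open.
\begin{enumerate}[1.]
\item Fix a continuous function $\varepsilon \colon \C \to (0,1)$ with $\set{z \mid \dist(z,E) < \varepsilon(z)} \subseteq V$. I would then replace this by a smaller open neighbourhood $V' \subseteq V$ of $E$ whose boundary is a locally finite union of polygonal arcs. One way is to take the union of the closed squares of a locally refined grid that meet $E$, and then take the interior, checking that this interior still contains $E$. Then $\C \setminus V'$ has only finitely many components meeting each disc $D(0,R)$.
\item Let $(C_j)$ be the bounded components of $\C \setminus V'$. Each $C_j$ lies in $\C \setminus E$, whose closure in $\widehat{\C}$ is connected since $E$ has no holes. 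So there is a polygonal arc $\gamma_j \subseteq \C \setminus E$ joining $C_j$ to $\infty$.
\item The key point is to choose the $\gamma_j$ \emph{locally finite}. Here I use that $\widehat{\C}\setminus E$ is locally connected at $\infty$, which is equivalent to the BEH property. For every $R$ there is $R' > R$ such that any point of $\C\setminus E$ outside $D(0,R')$ can be joined to $\infty$ inside $\C \setminus (E \cup D(0,R))$. Choose radii $R_1 < R_2 < \dots$ accordingly. For components $C_j$ lying outside $D(0,R_{k+1})$, choose $\gamma_j$ to avoid $D(0,R_k)$. Only finitely many $C_j$ meet the annulus $D(0,R_{k+1}) \setminus D(0,R_k)$, so the family $(\gamma_j)$ is locally finite.
\item Set $U = V' \setminus \bigcup_j \gamma_j$. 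The union $\bigcup_j \gamma_j$ is closed by local finiteness and is disjoint from $E$, so $U$ is an open set with $E \subseteq U \subseteq V$.
\item Every component of $\C\setminus U$ either was already unbounded, or contains some $C_j \cup \gamma_j$ and is therefore unbounded. Hence $U$ is simply connected in the paper's sense.
\end{enumerate}

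\textbf{($\Leftarrow$).}
\begin{enumerate}[1.]
\item \emph{No holes.} Suppose $H$ is a hole of $E$ and $p \in H$. Take $V = \C \setminus\set{p}$ and a simply connected $U$ with $E \subseteq U \subseteq V$. The component of $\widehat{\C} \setminus U$ containing $p$ is connected, contains $\infty$, and misses $E$. It therefore lies in a single component of $\widehat{\C}\setminus E$, which must be $H$; but $H$ is bounded, a contradiction.
\item \emph{BEH, setup.} Suppose instead that $h(E \cup D)$ is unbounded for some closed disc $D$. Pick points $q_n$ in holes $H_n$ of $E\cup D$ with $q_n \to \infty$. Take $V = \C \setminus \set{q_n}_n$, which is open since the set $\set{q_n}_n$ is discrete, and a simply connected $U$ with $E \subseteq U \subseteq V$.
\item \emph{BEH, continua.} The component of $\widehat{\C}\setminus U$ through $q_n$ reaches $\infty$ and avoids $E$. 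Since $\partial H_n \subseteq E \cup D$, it must leave $H_n$ through $D$. This yields continua $L_n \subseteq \overline{H_n} \setminus U$ joining $q_n$ to $D$.
\item \emph{BEH, limit.} Passing to a Hausdorff limit in $\widehat{\C}$ gives a continuum $L \subseteq \widehat{\C}\setminus U \subseteq \widehat{\C}\setminus E$ joining $D$ to $\infty$.
\end{enumerate}

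The main obstacle is turning step 4 into a contradiction. To get one, I would not take $V$ to be the complement of the $q_n$ alone, but shrink it using the failure of local connectivity at $\infty$. Concretely, choose $V = \C \setminus \bigcup_n B_n$, where the $B_n$ are small closed discs around suitable points of $\overline{H_n}$. They are placed so that any continuum in $\widehat{\C}\setminus E$ from $\infty$ to $D$ that approximates the $L_n$ is forced to meet infinitely many $H_n$ near $\infty$ while staying outside $D$. The fact that the $H_n$ are separated from $\infty$ by $E\cup D$ should then rule this out.
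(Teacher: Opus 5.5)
The paper does not prove this statement; it quotes it from Fournodavlos~\cite{ArakelianCharacterization}, so your argument has to stand on its own.

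Your ($\Rightarrow$) direction is a workable outline, but one detail needs repair. An unbounded component of $\C\setminus E$ need not contain a proper arc to $\infty$ (take a disc with ever longer dead-end channels attached). So step 2 does not follow from ``no holes'' alone, and ``joined to $\infty$'' in step 3 also needs justification. You can realise it by concatenation. Start at a point beyond $R_{k+1}$ and go, avoiding $D(0,R_k)$, to a point beyond $R_{k+2}$. From there go to a point beyond $R_{k+3}$ avoiding $D(0,R_{k+1})$, and so on. Also start each $\gamma_j$ at a point of $C_j$ of maximal modulus; then $\gamma_j\cap D(0,R_m)\neq\emptyset$ forces $C_j\subseteq D(0,R_{m+1})$, which gives local finiteness. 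Your no-holes argument for ($\Leftarrow$) is correct.

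The genuine gap is the end of the BEH argument. As you note, a continuum $L\subseteq\widehat{\C}\setminus U$ joining $D$ to $\infty$ is not contradictory by itself. Your remedy (re-choosing $V=\C\setminus\bigcup_n B_n$ with discs ``placed so that'' approximating continua meet infinitely many $H_n$) specifies neither the placement nor why it would conclude. As written, the BEH property is therefore not proved.

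The missing observation is that you never use that the $L_n$ lie in closures of pairwise \emph{distinct} components $H_n$ of $\C\setminus(E\cup D)$. With it, no change of $V$ is needed.
\begin{enumerate}
\item Suppose $z\in L\cap\C$ and $z\notin E\cup D$, and let $O$ be the open component of $\C\setminus(E\cup D)$ containing $z$.
\item Points of $L_n$ converging to $z$ (along the subsequence defining $L$) eventually lie in $O\cap\overline{H_n}$. So $O$ meets $H_n$, hence $O=H_n$ for infinitely many distinct $n$, which is absurd. Thus $L\cap\C\subseteq E\cup D$.
\item Since $L\cap U=\emptyset$ and $E\subseteq U$, you get $L\subseteq D\cup\set{\infty}$. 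Openness of $U$ is essential here: a limit of sets merely disjoint from $E$ could touch $E$.
\item $L$ is connected and meets both of the disjoint closed sets $D$ and $\set{\infty}$, a contradiction.
\end{enumerate}

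For completeness, obtain the $L_n$ from the boundary bumping theorem. Choose the $H_n$ pairwise distinct, and apply the theorem to the continuum $\widehat{\C}\setminus U$ and its relatively open, proper, nonempty subset $H_n\setminus U$. This gives a continuum in $\overline{H_n}\setminus U$ from $q_n$ to a point of $\partial H_n\setminus U\subseteq D$.
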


\begin{remark}
    Recall that a simply connected set might have multiple connected components, but every connected component is simply connected in the usual sense.
\end{remark}

An immediate consequence of the above theorem is that a finite union of pairwise disjoint Arakelian sets is again Arakelian.
Although this can be proven directly from the definitions, using the above characterization yields a very short and elegant proof, see~\cite[Corollary 2.9]{ArakelianCharacterization}.

\begin{proposition}
    A finite union of pairwise disjoint Arakelian sets is an Arakelian set.
\end{proposition}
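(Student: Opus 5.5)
The plan is to use the characterization in Theorem~\ref{characterization}. Let $E_1, \dots, E_m$ be pairwise disjoint Arakelian sets and put $E = E_1 \cup \dots \cup E_m$. A finite union of closed sets is closed, so $E$ is closed. Fix an open neighbourhood $V$ of $E$. It then suffices to produce a simply connected open set $U$ with $E \subseteq U \subseteq V$.

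The first step is to separate the pieces by pairwise disjoint open neighbourhoods. Since the $E_j$ are closed, pairwise disjoint and finite in number, the normality of $\C$ gives open sets $W_1, \dots, W_m$ with $E_j \subseteq W_j$ and $W_i \cap W_j = \emptyset$ for $i \ne j$. Concretely, one may take
\[W_j = \set{z \in \C \mid \dist(z, E_j) < \dist(z, E \setminus E_j)}.\]
These sets are open because the distance functions are continuous. They are pairwise disjoint by the strict inequality, after noting that $E \setminus E_j = \bigcup_{i\neq j} E_i$ is closed. Each $W_j$ contains $E_j$ because $\dist(z, E\setminus E_j) > 0$ for $z \in E_j$.

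The second step applies Theorem~\ref{characterization} to each $E_j$ separately, using the neighbourhood $V \cap W_j$. This yields simply connected open sets $U_j$ with $E_j \subseteq U_j \subseteq V \cap W_j$. Setting $U = U_1 \cup \dots \cup U_m$, we have $E \subseteq U \subseteq V$.

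The only point needing care is that $U$ is simply connected in the paper's sense, meaning every connected component of $U$ is simply connected. This is where the disjointness is used. The sets $U_j$ are open and pairwise disjoint, since they lie in the disjoint sets $W_j$. A connected subset of $U$ is covered by the disjoint open sets $U_j$, so it must lie entirely in one of them. Hence every component of $U$ is a component of some $U_j$, and is therefore simply connected. Since $V$ was arbitrary, the converse direction of Theorem~\ref{characterization} shows that $E$ is Arakelian.

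The argument has no real obstacle. The one subtlety is the choice of separating neighbourhoods $W_j$, which requires only that there be finitely many pieces and that they be closed and disjoint. With infinitely many pieces, the set $E \setminus E_j$ need not be closed and the union need not even be closed, which explains the finiteness hypothesis.
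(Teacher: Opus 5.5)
Your proof is correct and is essentially the approach the paper intends. The paper gives no written-out proof: it only remarks that the characterization in Theorem~\ref{characterization} yields a short proof, citing \cite[Corollary 2.9]{ArakelianCharacterization}, and your argument is exactly that one. You separate the pieces by pairwise disjoint open neighbourhoods, choose a simply connected neighbourhood of each piece inside its own region, and take the union.
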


We will also need a refined version of the Arakelian approximation theorem, which allows for interpolation at finitely many points.
This result is due to Nikolov and Pflug, see~\cite{ArakelianInterpolation}.

\begin{theorem}\label{interpolation}
    Let $E \subseteq \C$ be an Arakelian set, $h \in \mathcal{O}(E)$ and $\varepsilon > 0$.
    Furthermore, let $S \subseteq \mathring{E}$ be a finite set and $m \ge 0$. 
    Then there exists an entire function $f \in \mathcal{O}(\C)$ such that $\norm{f-h}_{E} < \varepsilon$ and $f^{(\nu)}(p) = h^{(\nu)}(p)$ for all $p \in S$ and $0 \le \nu \le m$.
\end{theorem}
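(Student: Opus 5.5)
The approach is to reduce interpolation to a finite-dimensional linear-algebra correction on top of plain Arakelian approximation (Theorem~\ref{classical}). The key is a supply of entire functions that are bounded on $E$ and have arbitrary prescribed jets at $S$. If $E=\C$ there is nothing to prove, since $h$ is then entire and we take $f=h$. Otherwise fix a point $a\in\C\setminus E$ and $\rho>0$ with $D(a,\rho)\cap E=\emptyset$.

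Let $J\colon g\mapsto \bigl(g^{(\nu)}(p)\bigr)_{p\in S,\,0\le\nu\le m}\in\C^N$ be the jet map, where $N=|S|(m+1)$. Let $W\subseteq\C^N$ be the image under $J$ of the space of entire functions $g$ with $\norm{g}_E<\infty$. I will first show $W=\C^N$ in two steps.

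\begin{enumerate}[(i)]
\item Rational functions $R(z)=P\bigl(1/(z-a)\bigr)$, with $P$ a polynomial, are holomorphic and bounded on the neighbourhood $\C\setminus D(a,\rho/2)$ of $E$. Their jets at $S$ fill $\C^N$. Indeed, the change of variable $w=1/(z-a)$ is biholomorphic near each $p\in S$, so this is Hermite interpolation by polynomials in $w$ at the finitely many distinct points $1/(p-a)$.
\item Choose $r>0$ with $D(p,r)\subseteq\mathring E$ for all $p\in S$, and apply Theorem~\ref{classical} to such an $R$ with error $\eta$. The resulting entire $g$ is bounded on $E$. By the Cauchy estimates on $D(p,r)$ we get $|(g-R)^{(\nu)}(p)|\le \nu!\,\eta\, r^{-\nu}$. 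Hence $J(R)$ lies in the closure of $W$. A finite-dimensional subspace is closed, so $W=\C^N$.
\end{enumerate}

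Now fix entire functions $\varphi_1,\dots,\varphi_N$, bounded on $E$, whose jets $J(\varphi_i)$ form a basis of $\C^N$. Let $A$ be the inverse of the matrix with columns $J(\varphi_i)$, and set $M=\max_i\norm{\varphi_i}_E$. Given $\eta>0$, apply Theorem~\ref{classical} to $h$ to obtain an entire $g$ with $\norm{g-h}_E<\eta$. By the same Cauchy estimates, $\abs{J(g)-J(h)}\le C_r\eta$, where $C_r$ depends only on $r$ and $m$. Write $J(g)-J(h)=\sum_i c_iJ(\varphi_i)$ with $\abs{c}\le\norm{A}C_r\eta$, and put $f=g-\sum_i c_i\varphi_i$. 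Then $J(f)=J(h)$, which gives the required interpolation conditions, and
\[\norm{f-h}_E\le \eta+N M\norm{A}C_r\eta,\]
which is less than $\varepsilon$ for $\eta$ small.

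The only genuinely delicate point is step (i)–(ii): guaranteeing correction functions that are entire yet bounded on the possibly unbounded set $E$. Naive corrections of the form $Q+\prod_{p\in S}(z-p)^{m+1}g$ with $Q$ a polynomial fail, because $\prod_{p\in S}(z-p)^{m+1}$ blows up along $E$. Routing the corrections through rational functions with their only pole off $E$, and then through Theorem~\ref{classical} plus density, is what I expect to resolve this. The remaining steps are routine Cauchy estimates and finite-dimensional linear algebra.
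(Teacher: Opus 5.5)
Your argument is correct and complete. The paper gives no proof of this statement: it is quoted from Nikolov and Pflug \cite{ArakelianInterpolation} and then used as a black box, for instance in Proposition~\ref{PosplositevArakelian}.

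Your route reduces it to plain Arakelian approximation (Theorem~\ref{classical}) by the standard finite-dimensional correction principle: approximation, together with finitely many continuous linear functionals, yields approximation with interpolation. The two non-formal inputs are handled properly:
\begin{itemize}
\item The jet functionals at $S$ are continuous for the sup-norm on $E$ exactly because $S\subseteq\mathring E$, via Cauchy estimates on $D(p,r)\subseteq\mathring E$.
\item The correction functions must be entire and also bounded on the possibly unbounded set $E$. You get them by approximating rational functions $P\bigl(1/(z-a)\bigr)$ whose only pole lies off $E$. Hermite interpolation in $w=1/(z-a)$ shows these realize every jet; the map is injective, so the nodes stay distinct, and $w'\neq 0$. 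Closedness of the finite-dimensional subspace $W$ then transfers this to the entire approximants.
\end{itemize}
You could also skip the closure step. Approximate $N$ such rational functions whose jets form a basis; for small enough error the approximants' jets still form a basis, since invertibility of the jet matrix is an open condition.

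The citation buys the paper brevity. Your argument buys self-containedness: the interpolation version rests only on Theorem~\ref{classical}. It also makes transparent why the interpolation points must lie in $\mathring E$.
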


Next, we will show that given a locally finite collection of pairwise disjoint Arakelian sets, 
functions can be approximated simultaneously on each set with independent precision. 
Moreover, interpolation at finitely many points in each of the sets is still possible.

\begin{proposition}\label{PosplositevArakelian}
    Let $\left(E_j\right)_{j \ge 1}$ be a locally finite pairwise disjoint family of Arakelian sets.
    For each $j \ge 1$, choose $h_j \in \mathcal{O}(E_k)$ and $\varepsilon_j > 0$.
    Furthermore, let $S_j \subseteq \mathring{E}_j$ be a finite set and let $m_j \ge 0$.
    Then there exists an entire function $f \in \mathcal{O}(\C)$ such that, for every $j \ge 1$, we have $\norm{f-h_j}_{E_j} < \varepsilon_j$ and $f^{(\nu)}(p) = h_j^{(\nu)}(p)$ for all $p \in S_j$ and $0 \le \nu \le m_j$.
\end{proposition}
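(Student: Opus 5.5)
The approach is to reduce the simultaneous problem to a single application of Theorem~\ref{interpolation} on the union $E = \bigcup_{j \ge 1} E_j$. Local finiteness and pairwise disjointness make the sets $E_j$ separated near every point. The independent precisions $\varepsilon_j$ are handled by a correction factor that is small on far-away sets. Concretely, the argument proceeds in three steps.

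\emph{Step 1: $E$ is Arakelian.} Since the family is locally finite, $E$ is closed, and each compact set meets only finitely many $E_j$. Given an open neighbourhood $V$ of $E$, I would use local finiteness to choose pairwise disjoint open sets $V_j \supseteq E_j$ with $V_j \subseteq V$ that again form a locally finite family. For instance, take $V_j \subseteq E_j[r_j]$ with radii shrinking fast enough that the $r_j$-neighbourhoods stay disjoint and locally finite, which is possible since $\dist(E_j, E_k) > 0$ locally. Theorem~\ref{characterization} gives simply connected open sets $U_j$ with $E_j \subseteq U_j \subseteq V_j$. The union $U = \bigcup_j U_j$ is then a disjoint union whose components are simply connected, so $U$ is simply connected in the paper's sense. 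Applying Theorem~\ref{characterization} in the converse direction shows $E$ is Arakelian. This extends the finite-union proposition, and local finiteness is exactly what makes the infinite union work.

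\emph{Step 2: independent precisions.} Define $h$ on a neighbourhood of $E$ by $h = h_j$ near $E_j$. The only difficulty is that a single approximation $\norm{f-h}_E < \varepsilon$ cannot reach the $\varepsilon_j \to 0$. I would first choose a continuous positive function $\eta$ on $E$ with $\eta < \varepsilon_j/2$ on $E_j$. Then I would produce a nonvanishing entire function $g = e^{\phi}$, where $\phi$ is obtained by Theorem~\ref{classical} on $E$ approximating the locally constant function $\log(\min_{k \le j}\varepsilon_k/4)$ on $E_j$ to within $1$. This gives $\abs{g} \le \varepsilon_j/2$ roughly on $E_j$, and in particular $\abs{g} \le \varepsilon_j$ on $E_j$ after adjusting constants. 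Next I would apply Theorem~\ref{interpolation}-type approximation to $h/g \in \mathcal{O}(E)$ with precision $1$, obtaining an entire $F$ with $\abs{F - h/g} < 1$ on $E$. Setting $f_0 = gF$ gives $\abs{f_0 - h} < \abs{g} \le \varepsilon_j$ on $E_j$.

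\emph{Step 3: interpolation.} The main obstacle is imposing interpolation on infinitely many finite sets $S_j$ simultaneously, since Theorem~\ref{interpolation} allows only finitely many points. I would handle this by a Weierstrass/Mittag-Leffler correction. The points of $S = \bigcup_j S_j$ form a discrete set by local finiteness, so there is an entire function $P$ vanishing to order exactly $m_j+1$ at each $p \in S_j$. By Mittag-Leffler interpolation there is an entire $Q$ with prescribed jets $Q^{(\nu)}(p) = (h_j - f_0)^{(\nu)}(p)$, and these jets are tiny. The corrected function is $f = f_0 + Q + P\cdot G$, where $G$ is entire and chosen so that $Q + PG$ is small on each $E_j$. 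To arrange this, I would approximate $-Q/P$, which is meromorphic with poles only in $S$, off small discs around $S$. The cleanest route, which I would try first, is to fold interpolation into Step 2: approximate $(h - Q)/(gP)$ on $E$ minus small open discs around $S$, which is still Arakelian after a small modification. The error near the points of $S$ is then controlled by the maximum principle on those discs, since the jets of $f - h_j$ vanish there. Getting the estimates on these discs to fit within $\varepsilon_j$ is the delicate part.
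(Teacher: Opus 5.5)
Your Steps 1 and 2 are sound, and they give a genuinely different, non-inductive proof of the approximation part without interpolation. Theorem~\ref{characterization} shows directly that $E=\bigcup_j E_j$ is Arakelian; the paper instead deduces this afterwards from this very proposition. One fix: take $V_j=\set{z\in V \mid \dist(z,E_j)<\dist(z,\bigcup_{k\neq j}E_k)}$ rather than neighbourhoods of constant radius, since disjoint unbounded closed sets can be at distance $0$. The factor $g=e^{\phi}$ with $\phi\approx\log(\varepsilon_j/4)$ on $E_j$ then neatly turns a single approximation with precision $1$ into independent precisions $\varepsilon_j$.

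The gap is Step 3, which is where the real difficulty lies, since Theorem~\ref{interpolation} only handles finitely many points. Your workaround fails as stated, for three reasons:
\begin{enumerate}
\item Removing small open discs around the points of $S\subseteq\mathring{E}$ creates holes. So $E\setminus\bigcup_{p\in S}\mathring{D}(p,\delta_p)$ is not Arakelian, and no small modification repairs this short of refilling the discs.
\item Your $Q$ matches the jets of $h-f_0$, so the jets of $h-Q$ at $p$ are those of $f_0$. Hence $(h-Q)/(gP)$ in general has a genuine pole at $p$. By Cauchy's theorem, any entire function differs from it somewhere on $\partial D(p,\delta)$ by at least $\abs{c}\delta^{-k}$, where $c(z-p)^{-k}$ is the leading term of the principal part.
\item If instead $Q$ matches the jets of $h$ itself, then $(h-Q)/P$ is holomorphic near $E$ and nothing has to be removed. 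But the error on $E_j$ becomes $\abs{g}\abs{P}$ times the approximation error, and nothing controls $\abs{P}$ on an unbounded $E_j$. You would need a nonvanishing entire $g$ with $\abs{g}\le\varepsilon_j/\abs{P}$ on $E_j$. That is a non-constant decay rate, which the locally constant $\log\varepsilon$ trick does not provide.
\end{enumerate}

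The missing idea is never to impose infinitely many conditions at once, using an exhaustion as the paper does. Let $\Pi_n$ consist of $D(0,n)$, the finitely many $E_j$ meeting it, and the holes of this union; it is Arakelian. Obtain $f_{n+1}$ from Theorem~\ref{interpolation} on the finite disjoint union of $\Pi_n$ with the $E_j$ that meet $\Pi_{n+1}$ but not $\Pi_n$. Require $f_{n+1}$ to approximate $f_n$ on $\Pi_n$ and $h_j$ on each new $E_j$ within $\delta_{n+1}\le\min\set{\delta_n/2,\varepsilon_j/2}$. At the same time, keep the jets of $f_n$ at the points already used and impose those of $h_j$ at the new points of $S_j$. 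Each stage involves only finitely many points and preserves the jets exactly. The $f_n$ converge locally uniformly, and the telescoping sum bounds the error on $E_j$ by $\varepsilon_j$. This yields the independent precisions directly, so your Step 2 is not needed there, though it remains a nice alternative when no interpolation is required.
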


\begin{proof}
    Let $\left(D_n\right)_{n \ge 1}$ be the sequence of closed discs $D_n = D(0,n)$.
    For each $n \ge 1$, we define an Arakelian set $\Pi_n$ as follows.
    Let $E_{j_1^n}, \ldots, E_{j^n_{l_n}}$ be those sets of the family $\left(E_j\right)_{j \ge 1}$ that intersect $D_n$.
    Since the family is locally finite, there are only finitely many.
    We then define 
    \[\Pi_n = D_n \cup \bigcup_{k=1}^{l_n}E_{j^n_k} \cup h\left(D_n \cup \bigcup_{k=1}^{l_n}E_{j^n_k}\right).\]
    By construction, the set $\Pi_n$ has no holes. Furthermore, since the sets $E_{j_1^n}, \ldots, E_{j^n_{l_n}}$ are Arakelian and pairwise disjoint, 
    their union has the BEH property. Since the set $D_n$ is compact, the union $D_n \cup \bigcup_{k=1}^{l_n}E_{j^n_k}$ also has the BEH property and then so does $\Pi_n$, thus $\Pi_n$ is an Arakelian set.
    Next we define 
    \[J_n = \set{j \in \N \mid E_j \cap \Pi_n \neq \emptyset}.\]
    Observe that all the sets $J_n$ are finite, $\N = \bigcup_{n \ge 1} J_n$ and $J_n \subseteq J_{n+1}$.
    Furthermore, $j \in J_n$ implies $E_j \subseteq \Pi_n$ and for all $j \in J_{n+1}\setminus J_n$ we have $E_j \cap \Pi_n = \emptyset$.
    Finally set $\widetilde{S}_0 = \emptyset$ and denote 
    \[\widetilde{S}_n = \bigcup_{j \in J_n}S_j \quad \text{and} \quad \widetilde{m}_n = \max \set{m_j \mid j \in J_n}.\]

    We will now inductively construct a sequence of entire functions $(f_n)_{n \ge 1}$ and a sequence of positive constants $(\delta_n)_{n \ge 1}$ 
    such that the following conditions hold:
    \begin{enumerate}
    \item $\norm{f_n-f_{n-1}}_{\Pi_{n-1}} < \delta_n$,
    \item $f_n^{(\nu)}(p) = f_{n-1}^{(\nu)}(p)$ for $p \in \widetilde{S}_{n-1}$ and $0 \le \nu \le \widetilde{m}_n$,
    \item $f_n^{(\nu)}(p) = h_j^{(\nu)}(p)$ for $p \in \widetilde{S}_n \setminus \widetilde{S}_{n-1}$ and $0 \le \nu \le \widetilde{m}_n$.
    \end{enumerate}
    
    For the case $n=1$, we first observe that the set $\widetilde{E}_1 = \bigcup_{j \in J_1} E_j$ is Arakelian. 
    We then denote $\delta_1 = \min\set{\varepsilon_j/2 \mid j \in J_1}$ and define the function $h \in \mathcal{O}\left(\widetilde{E}_1\right)$ by setting $h = h_j$ on $E_j$.
    Applying Theorem~\ref{interpolation} yields an entire function $f_1 \in \mathcal{O}(\C)$ such that 
    \[\norm{f_1 - h}_{\widetilde{E}_1} < \delta_1 \quad \text{and} \quad f_1^{(\nu)}(p) = h^{(\nu)}(p) \text{ for $p \in \widetilde{S}_1$ and $0 \le \nu \le \widetilde{m}_1$}.\]
    Condition (3) holds by construction, while conditions (1) and (2) in this case hold vacuously.
    
    For the general case, suppose we have already constructed $f_n \in \mathcal{O}(\C)$ and $\delta_n > 0$.
    Since we have $E_j \cap \Pi_n = \emptyset$ for $j \in J_{n+1} \setminus J_n$, the set $\widetilde{E}_{n+1} = \Pi_n \cup \bigcup_{j \in J_{n+1} \setminus J_n} E_j$ is a disjoint union of Arakelian sets and hence Arakelian.
    We then denote $\delta_{n+1} = \min \set{\delta_n/2,\, \varepsilon_j/2 \mid j \in J_{n+1} \setminus J_n}$ and define the function $h \in \mathcal{O}\left(\widetilde{E}_{n+1}\right)$ by setting $h = h_j$ on $E_j$ and $h = f_n$ on $\Pi_n$.
    Applying Theorem~\ref{interpolation} yields an entire function $f_{n+1} \in \mathcal{O}(\C)$ such that
    \[\norm{f_{n+1} - h}_{\widetilde{E}_{n+1}} < \delta_{n+1} \quad \text{and} \quad f_{n+1}^{(\nu)}(p) = \begin{cases}f_{n}^{(\nu)}(p) \text{ for $p \in \widetilde{S}_n$ and $0 \le \nu \le \widetilde{m}_{n+1}$} \\ h^{(\nu)}(p) \text{ for $p \in \widetilde{S}_{n+1} \setminus \widetilde{S}_n$ and $0 \le \nu \le \widetilde{m}_{n+1}$} \end{cases}.\]
    Note that conditions (1), (2), and (3) are satisfied by construction.

    Since $\delta_n < \delta_{n+1}/2$, condition (1) of the induction hypothesis implies that for any $n \ge 1$ the functions $(f_k)_{k \ge n}$ form a Cauchy sequence on $D_n$.
    As $\bigcup_{n \ge 1} D_n = \C$, the sequence $(f_n)_{n \ge 1}$ converges uniformly on compact sets to some entire function $f \in \mathcal{O}(\C)$.
    Now fix some $j \ge 1$ and let $N_j$ be the smallest integer such that $j \in J_{N_j}$.
    For $n \ge N_j$ we have $E_j \subseteq \Pi_{N_j} \subseteq \Pi_n$ and we can estimate
    \begin{align*}
        \norm{f-h_j}_{E_j} & \le \norm{f_{N_j}-h_j}_{E_j} + \sum_{n = N_j}^\infty \norm{f_{n+1}-f_n}_{E_j} \le \norm{f_{N_j}-h_j}_{E_j} + \sum_{n = N_j}^\infty \norm{f_{n+1}-f_n}_{\Pi_n} \\
        &< \sum_{n = N_j}^\infty \delta_{n} \le \sum_{n = N_j}^\infty \delta_j 2^{N_j - n} \le \varepsilon_j \sum_{n = N_j}^\infty 2^{N_j-n-1} = \varepsilon_j
    \end{align*}
    We also have $m_j \le \widetilde{m}_{N_j} \le \widetilde{m}_n$,
    so given a point $p \in S_j$ and $0 \le \nu \le m_j$, it follows from items (2) and (3) of the induction hypothesis that $f_n^{(\nu)}(p) = h_j^{(\nu)}(p)$ for all $n \ge N_j$.
    Since the functions $f_n$ converges to $f$ uniformly on compact sets, this implies $f^{(\nu)}(p) = h_j^{(\nu)}(p)$.
\end{proof}

\begin{remark}
    If the interpolation property is not required, the same proof can be carried out by using the classical Arakelian approximation theorem instead of Theorem~\ref{interpolation}.
    Furthermore, if the sets $(E_j)_{j \ge 1}$ are all compact, it suffices to use a Runge approximation theorem with interpolation, see \cite[Theorem 4]{GeometryOfWD}.
\end{remark}

In particular, Proposition~\ref{PosplositevArakelian} states that the union of a locally finite pairwise disjoint family of Arakelian sets is a set of uniform approximation.
In light of Arakelian's characterization, this gives the following proposition.

\begin{proposition}\label{locallyfiniteArakelian}
    The union of a locally finite pairwise disjoint family of Arakelian sets is an Arakelian set.
\end{proposition}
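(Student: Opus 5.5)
The plan is to deduce the statement directly from Proposition~\ref{PosplositevArakelian} combined with the sharpness result of Gauthier and Hengartner~\cite{Gauthier_Hengartner}, which says that every set of uniform approximation has no holes and has the BEH property, and hence is Arakelian.

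Let $(E_j)_{j \ge 1}$ be a locally finite, pairwise disjoint family of Arakelian sets, and put $E = \bigcup_{j \ge 1} E_j$. The family is countable without loss of generality, since a locally finite family in $\C$ has only countably many nonempty members: each disc $D(0,n)$ meets only finitely many of them. If the family is finite, pad it with empty sets or simply run the same argument.

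The first step is to check that $E$ is closed. This is the standard fact that a locally finite union of closed sets is closed. Indeed, near any point $z$ there is a neighbourhood meeting only finitely many $E_j$, and the union of those finitely many closed sets is closed, so if $z \notin E$ some smaller neighbourhood of $z$ misses $E$.

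Next we show that $E$ is a set of uniform approximation. Take $h \in \mathcal{O}(E)$ and $\varepsilon > 0$, so $h$ is holomorphic on some open neighbourhood $W$ of $E$. Then $h_j := h|_{W}$ is holomorphic on a neighbourhood of each $E_j$. Apply Proposition~\ref{PosplositevArakelian} with $\varepsilon_j = \varepsilon/2$, $S_j = \emptyset$ and $m_j = 0$; by the remark following it, the version without interpolation suffices. This gives an entire $f$ with $\norm{f - h}_{E_j} < \varepsilon/2$ for all $j$, and therefore $\norm{f-h}_E \le \varepsilon/2 < \varepsilon$. So $E$ is a set of uniform approximation, and by~\cite{Gauthier_Hengartner} it is Arakelian.

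No step is a serious obstacle. The only point needing care is that the sets $E_j$ sit inside the common neighbourhood $W$, so the restrictions $h_j$ are legitimate germs on each $E_j$.
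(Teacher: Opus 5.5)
Your proof is correct and follows the paper's route: the paper also deduces this proposition by noting that Proposition~\ref{PosplositevArakelian} makes the union a set of uniform approximation, and then invoking the Gauthier--Hengartner characterization of such sets as Arakelian. Your explicit checks that the union is closed and that the family may be taken countable are details the paper leaves implicit.
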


\begin{remark}
    The local finiteness of the family is a necessary condition. 
    Given a countable family of pairwise disjoint Arakelian, their union need not be an Arakelian set.
    Consider, for example, the family $\left(E_n\right)_{n \ge 0}$ given by
    \[E_0 = \set{2} \times [0,\infty), \quad E_n = \set{\sum_{k=0}^{n-1}\frac{1}{2^k}-\frac{1}{2^n}, \sum_{k=0}^{n-1}\frac{1}{2^k}} \times [0,n] \cup \left[\sum_{k=0}^{n-1}\frac{1}{2^k}-\frac{1}{2^n}, \sum_{k=0}^{n-1}\frac{1}{2^k}\right] \times \set{n}.\]
    It is easy to verify that each set $E_n$ is Arakelian, but their union $\bigcup_{n \ge 0} E_n$ is not, since it does not have the BEH property. See Figure~\ref{fig2}. 
\end{remark}

\begin{figure}[H]
    \centering
    \includegraphics[width=0.54\textwidth]{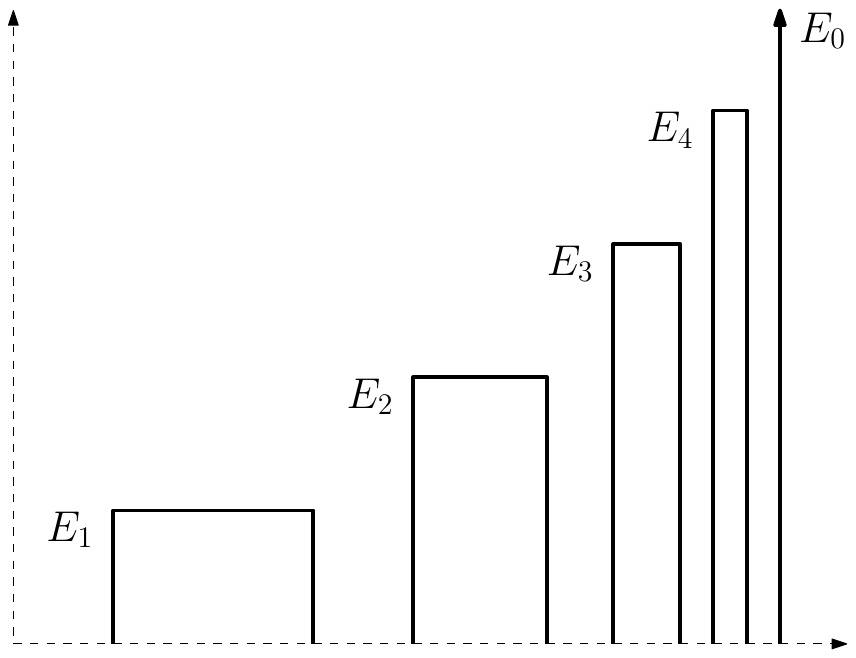}
    \caption{An example of a family of pairwise disjoint Arakelian sets $(E_n)_{n \ge 0}$ whose union is not an Arakelian set. 
    Indeed, given a closed disc $D$ centered at 2, one can quickly verify that the set $h\left(D \cup \bigcup_{n \ge 0} E_n\right)$ is unbounded.}
    \label{fig2}  
\end{figure}

The next proposition is a converse to the above statement. 
We show that an isolated connected component of an Arakelian set is an Arakelian set. 
Recall that a connected component $A_0$ of the set $A \subseteq \C$ is isolated if $A_0$ is an open set in the induced subset topology on $A$.

\begin{proposition}
    An isolated connected component of an Arakelian set is an Arakelian set.
\end{proposition}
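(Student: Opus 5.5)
Let $E$ be Arakelian and $E_0$ an isolated connected component of $E$. We will use Fournodavlos' characterization, Theorem~\ref{characterization}, in both directions. Since $E_0$ is a connected component it is closed in $E$, hence closed in $\C$; since it is isolated, $E_1 = E \setminus E_0$ is also closed in $E$, hence closed in $\C$. So $E = E_0 \sqcup E_1$ is a disjoint union of two closed sets.

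The plan is to fix an arbitrary open neighbourhood $V$ of $E_0$ and produce a simply connected open set $W$ with $E_0 \subseteq W \subseteq V$.

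\textbf{Separating neighbourhood.} We need an open neighbourhood $V'$ of $E$ that contains no extra connected pieces joining $E_0$ to $E_1$. Since $E_0$ and $E_1$ are disjoint closed sets, the function $g(z) = \dist(z,E_0) - \dist(z,E_1)$ is continuous, and the sets $A_0 = \set{g<0}$ and $A_1 = \set{g>0}$ are disjoint open neighbourhoods of $E_0$ and $E_1$. (Even if $\dist(E_0,E_1)=0$, each point of $E_0$ has positive distance to $E_1$, so $g<0$ on $E_0$.) Set
\[V' = (V \cap A_0) \cup A_1,\]
an open neighbourhood of $E$. By Theorem~\ref{characterization} applied to the Arakelian set $E$, there is a simply connected open $U$ with $E \subseteq U \subseteq V'$.

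\textbf{Extracting the piece around $E_0$.} Let $W$ be the union of those connected components of $U$ that meet $E_0$. Each component of $U$ is connected and lies in the disjoint union of the open sets $V\cap A_0$ and $A_1$, so it lies entirely in one of them. A component meeting $E_0 \subseteq A_0$ cannot lie in $A_1$, hence lies in $V \cap A_0 \subseteq V$. Therefore $W$ is open, satisfies $E_0 \subseteq W \subseteq V$, and is simply connected, since each of its components is a component of $U$ and is simply connected in the usual sense, which is exactly the paper's convention. Because $V$ was arbitrary, Theorem~\ref{characterization} shows that $E_0$ is Arakelian.

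\textbf{Main obstacle.} The delicate step is the separation. If we simply took $U \cap V$, simple connectivity could fail, and a component of $U$ could run from $E_0$ to $E_1$ while leaving $V$. The neighbourhood $V'$ built from the open partition $\{A_0, A_1\}$ prevents this without requiring $\dist(E_0,E_1)>0$, which may fail for unbounded sets. The remaining point to verify is that $E_0$ and $E_1$ are closed in $\C$, which follows from the isolatedness of $E_0$ as shown above.
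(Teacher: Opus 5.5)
Your proof is correct, but it takes a different route from the paper's. The paper argues function-theoretically. It uses the fact that Arakelian sets are exactly the sets of uniform approximation (Arakelian's theorem, Theorem~\ref{classical}, together with its converse due to Gauthier and Hengartner). It extends a given $h \in \mathcal{O}(E_0)$ by $0$ on $E \setminus E_0$, approximates on $E$, and restricts to $E_0$. You instead stay on the topological side: you use Theorem~\ref{characterization} in both directions and extract the components of $U$ that meet $E_0$. Your disjoint open neighbourhoods $A_0 \supseteq E_0$ and $A_1 \supseteq E_1$ are also what the paper's argument tacitly needs. For the extension $\tilde h$ to belong to $\mathcal{O}(E)$, it must be holomorphic on an open neighbourhood of $E$, and this requires disjoint open neighbourhoods of $E_0$ and $E \setminus E_0$ even when $\dist(E_0, E \setminus E_0) = 0$. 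The paper leaves this step implicit; you make it explicit. The paper's version is shorter and sits naturally next to the approximation theorem. Yours avoids the converse direction of the approximation characterization, and it is in the spirit of the paper's appeal to Fournodavlos for finite disjoint unions. Neither argument uses connectedness of $E_0$, only that $E_0$ is relatively clopen in $E$. So either one shows more generally that any relatively clopen subset of an Arakelian set is Arakelian. The degenerate case $E_1 = \emptyset$, where your function $g$ is not defined, is trivial, since then $E_0 = E$.
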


\begin{proof}
    Let $E_0$ be an isolated connected component of the Arakelian set $E$, and choose $h \in \mathcal{O}(E_0)$ and $\varepsilon > 0$.
    Since $E_0$ is isolated, we can define a holomorphic function $\tilde{h} \in \mathcal{O}(E)$ by setting $\tilde{h} = h$ on $E_0$ and $\tilde{h} = 0$ on $E \setminus E_0$.
    As $E$ is Arakelian, it is a set of uniform approximation, and there exists an entire function $f \in \mathcal{O}(\C)$ such that $\norm{f - \tilde{h}}_E < \varepsilon$.
    In particular, $\norm{f - h}_{E_0} < \varepsilon$, so $E_0$ is also a set of uniform approximation and hence Arakelian.  
\end{proof}

\begin{remark}
    Together with Proposition~\ref{PosplositevArakelian}, the above result shows that in the Arakelian approximation theorem $\varepsilon$ can be taken to be a locally constant function.
    More precisely, given an Arakelian set $E$, a holomorphic function $h \in \mathcal{O}(E)$, and a locally constant function $\varepsilon \colon E \to (0, \infty)$, 
    there exists an entire function $f \in \mathcal{O}(\C)$ such that $\abs{f(z) - h(z)} < \varepsilon(z)$ for all $z \in E$.
\end{remark}

We finish the section by showing that, given a closed set contained in a simply connected open neighbourhood, 
one can enlarge it within the open neighbourhood so as to make it Arakelian. 
Furthermore, if the closed set has positive distance away from the boundary of the simply connected open neighbourhood,
it will also have positive distance away from the boundary of the Arakelian set.
The proof is based on ideas and constructions found in~\cite{UniformApproximationBoundedOnSet}.

\begin{proposition}\label{Arakelian_so_gosti}
    Let $F \subseteq U \subseteq \C$, where $F$ is a closed and $U$ a simply connected open set.
    Then there exists an Arakelian set $E$, with locally finite connected components, such that $F \Subset E \subseteq U$.
\end{proposition}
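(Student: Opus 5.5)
The idea is to build $E$ as a union of compact pieces that become ``fatter'' as we move towards infinity. We then use Proposition~\ref{locallyfiniteArakelian} to conclude that $E$ is Arakelian.

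\textbf{Step 1: a grid subdivision.} Choose a grid subdivision of $\C$ into closed squares whose side lengths shrink as they approach $\partial U$, in the style of a Whitney decomposition. This is an exhaustion-free way to control local behaviour. Let $Q$ be the union of all closed squares of the decomposition that meet $F$ or its $\rho$-neighbourhood, where $\rho \colon U \to (0,\infty)$ is a continuous function, smaller than half the distance to $\partial U$, chosen so that the union stays in $U$. The $\rho$-padding guarantees $F \subseteq \mathring{Q}$, i.e.\ $F \Subset Q \subseteq U$. Because the decomposition is locally finite in $U$ and $F$ is closed in $\C$, the set $Q$ is closed in $\C$. Each connected component of $Q$ is a union of squares, and the family of components is locally finite in $\C$: only finitely many squares of size bounded below meet any compact set away from $\partial U$, and near $\partial U$ the squares meeting $F[\rho]$ are absent.

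\textbf{Step 2: filling holes.} Replace each component $Q_0$ of $Q$ by $Q_0 \cup h(Q_0)$, and let $E$ be the union of these filled components. Since $U$ is simply connected, every bounded complementary component of $Q_0 \subseteq U$ lies in $U$. The filled set therefore still lies in $U$, and the filled components are either nested or disjoint. Keeping only the maximal ones gives a disjoint locally finite family.

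\textbf{Step 3: each filled component is Arakelian.} If $Q_0$ is compact, $Q_0 \cup h(Q_0)$ is a polynomially convex compact set, hence Arakelian. If $Q_0$ is unbounded, it is a locally finite union of squares of uniformly positive size away from $\partial U$. We check the BEH property by showing that $\widehat{\C}\setminus(Q_0\cup h(Q_0))$ is locally connected at $\infty$, or equivalently by verifying Theorem~\ref{characterization} directly. A thin open neighbourhood obtained by thickening squares slightly, and then filling holes, is simply connected, and such neighbourhoods can be made arbitrarily thin.

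\textbf{Main obstacle.} The delicate point is the BEH property for unbounded components. The example in Figure~\ref{fig1} shows that long thin spikes returning to a compact set can destroy it. The remedy I would try is to enlarge $Q_0$ further: add to $E$ all bounded complementary regions of $Q_0 \cup D(0,n)$ that lie inside $U$, for each $n$, so that $h(E\cup D)$ is forced to be bounded. Equivalently, choose the square sizes so that $\C\setminus Q$ stays ``uniformly open towards infinity'' within each component of $\C\setminus U$. This uses the fact that every component of $\C\setminus U$ is unbounded, so each complementary region of $E$ contains a path to $\infty$ near a component of $\C\setminus U$. Care is needed so that these additions preserve local finiteness and stay inside $U$. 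Once this holds, Proposition~\ref{locallyfiniteArakelian} yields that $E$ is Arakelian, and $F \Subset E \subseteq U$ holds by construction.
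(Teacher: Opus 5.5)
Your Steps 1--2 build essentially the paper's set. The paper takes a locally finite subcover of $F$ by closed discs $D_{z_n}\subseteq U$, sets $H=\bigcup_n D_{z_n}$ and $E=H\cup h(H)$; your Whitney squares play the role of the discs. The gap is Step 3: the BEH property, which is the whole content of the proposition, is never established. Saying that a slightly thickened, hole-filled neighbourhood ``can be made arbitrarily thin'' is exactly what Theorem~\ref{characterization} requires you to prove, so the argument is circular, and you acknowledge this under ``Main obstacle''. The remedy you sketch there (adding, for every $n$, the bounded complementary regions of $Q_0\cup D(0,n)$ lying in $U$) is not carried out. You give no argument that the enlarged set stays closed with locally finite components, or that it then has BEH. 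The diagnosis is also off: the pathology of Figure~\ref{fig1} needs infinitely many pieces accumulating on a bounded set, and a locally finite cover by fat pieces rules that out.

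The missing idea is that local finiteness alone gives BEH once all holes are filled, by a short direct argument. Fix a closed disc $B$. Only finitely many pieces meet $B$, so $\partial B$ minus these pieces is a finite union of open arcs $I_1,\dots,I_m$. Let $O$ be a hole of $E\cup B$. Then $\partial O\not\subseteq E$, since otherwise $O$ would be a hole of $E$. As $O\cap B=\emptyset$, $\partial O$ therefore contains a point $p\in\partial B\setminus E$, which lies on some $I_j$. Choose a small disc $V$ around $p$ with $V\cap E=\emptyset$. Then $V\setminus B$ is connected, lies in $\C\setminus(E\cup B)$ and meets $O$, hence $V\setminus B\subseteq O$. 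It follows that $\partial O\cap I_j$ is open and closed in $I_j$, so $I_j\subseteq\partial O$. The same inclusion $V\setminus B\subseteq O$ shows that no arc borders two different holes. Hence $E\cup B$ has at most $m$ holes, and $h(E\cup B)$ is bounded.

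Working with $E=Q\cup h(Q)$ for the whole union also avoids a second problem in your Step 2. The filled components need not have maximal elements or form a locally finite family. For example, take $U=\C$, $F=\bigcup_{k\ge1}\set{\abs{z}=k}$ and a fine grid: the filled components form an increasing chain, all containing $0$. So Proposition~\ref{locallyfiniteArakelian} could not be applied as you intend.
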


\begin{proof}
    For each $z \in F$, let $D_z$ denote a closed disc centered at $z$, such that $D_z \subseteq U$. 
    The family $\{\mathring{D}_z\}_{z \in F}$ forms an open cover of $F$, so we can choose a locally finite subcover $(\mathring{D}_{z_n})_{n \ge 1}$.
    We then define $H = \bigcup_{n \ge 1}D_{z_n}$ and set $E = H \cup h(H)$. 
    Note that $H$ is closed, since it is a locally finite union of closed sets.
    Furthermore, the connected components of $H$ are locally finite, so the same property holds for the set $E$.

    It is easy to check that $F \Subset E \subseteq U$, so it remains to show that the set $E$ is Arakelian.
    Note that $E$ has no holes by construction, so we must only very that it has the BEH property.
    To this end, let $B$ be a closed disc. 
    Since the family $(D_{z_n})_{n \ge 1}$ is locally finite, the disc $B$ intersects only finitely many of its members, say $D_{z_{n_1}}, \ldots, D_{z_{n_l}}$.
    The set $\partial B \setminus \bigcup_{k=1}^l D_{z_{n_k}}$ thus consist of finitely many disjoint open arcs.

    Let $U$ be a hole of the set $E \cup B$ and suppose that $\partial U$ does not intersect any of these arcs. 
    Then $\partial U \subseteq E$ and since $U \subseteq \C \setminus E$, this implies that $U$ is a hole of the set $E$, which is a contradiction.
    Thus $\partial U \cap I \neq \emptyset$, where $I$ is one of the open arcs making up the set $\partial B \setminus \bigcup_{k=1}^l D_{z_{n_k}}$. 
    We claim that $I \subseteq \partial U$.
    Indeed, since $\partial U$ is a closed subset of $\C$, the set $\partial U \cap I$ is closed in the subset topology of $I$.
    Next, pick $p \in \partial U \cap I$. In particular, $p \in \partial B \setminus E$, hence there exists an open neighbourhood $V$ of $p$ such that $V \cap E = \emptyset$ and that the set $V \setminus B$ is connected. 
    Since $(V \setminus B) \cap U \neq \emptyset$, the set $(V \setminus B) \cup U$ is a connected subset of $\C \setminus (E \cup B)$.
    Furthermore, since $U$ is a connected component of $\C \setminus (E \cup B)$, we have $V \setminus B \subseteq U$.
    This implies $V \cap I \subseteq \partial(V \setminus B) \cap I \subseteq \partial U \cap I$, which shows that $\partial U \cap I$ is open in the subset topology of $I$.
    Since the arc $I$ is connected, it follows that $\partial U \cap I = I$ or equivalently $I \subseteq \partial U$.

    Now suppose that $U_1$ and $U_2$ are two holes of the set $E \cup B$ and that $I \subseteq \partial U_1 \cap \partial U_2$.
    Pick $p \in I$ and let $V$ be an open neighbourhood of $p$, such that $V \cap E = \emptyset$ and that the set $V \setminus B$ is connected.
    Notice that $V \setminus B \subseteq \C \setminus (E \cup B)$.
    Then $W_1 = U_1 \cap (V \setminus B)$ and $W_2 = (\C \setminus (E \cup B \cup U_1)) \cap (V \setminus B)$ are disjoin open subsets of $V \setminus B$ such that $V \setminus B = W_1 \cup W_2$.
    Since $p \in V$ and $p \in \partial U_1 \cap \partial U_2$, we have $V \cap U_1 \neq \emptyset$ and $V \cap U_2 \neq \emptyset$.
    Thus $W_1 \neq \emptyset$ and $W_2 \neq \emptyset$, but this contradicts the connectedness of $V \setminus B$.
    
    To summarize, we have shown that any open arc making up the set $\partial B \setminus \bigcup_{k=1}^l D_{z_{n_k}}$ can be contained in the boundary of at most one of the holes of the set $E \cup B$. 
    Since there are only finitely many arcs, there can be at most finitely many holes, so the set $h(E \cup B)$ has to be bounded.
\end{proof}

\begin{corollary}
    Let $F \subseteq U \subseteq \C$, where $F$ is a closed and $U$ a simply connected open set such that $\dist(F, \partial U) > 0$.
    Then there exists an Arakelian set $E$, with locally finite connected components, such that $F \subseteq E \subseteq U$ and $\dist(F, \partial E) > 0$.
\end{corollary}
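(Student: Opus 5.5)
Set $r = \dist(F, \partial U) > 0$ and let $F' = \overline{F[r/2]}$ be the closed $r/2$-neighbourhood of $F$. The plan is to apply Proposition~\ref{Arakelian_so_gosti} to $F'$ in place of $F$, which produces the Arakelian set $E$ directly; all that then remains is to check the distance estimate.

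\textbf{Step 1: $F' \subseteq U$.} Take $w \in F'$ and choose $z \in F$ with $\abs{w - z} \le r/2 + \eta$ for some small $\eta < r/2$. The open disc of radius $r$ around $z$ does not meet $\partial U$, is connected, and contains $z \in U$. Hence it lies in $U$, and so $w \in U$.

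\textbf{Step 2: apply the proposition.} Proposition~\ref{Arakelian_so_gosti} applied to the closed set $F'$ and the simply connected open set $U$ gives an Arakelian set $E$ with locally finite connected components such that $F' \Subset E \subseteq U$. In particular $F \subseteq F' \subseteq E \subseteq U$.

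\textbf{Step 3: the distance bound.} Let $z \in F$ and $w \in \partial E$. Since $F' \subseteq \mathring{E}$ and $\partial E \cap \mathring{E} = \emptyset$, we have $w \notin F'$. Every point within distance $r/2$ of $z$ lies in $F'$, so $\abs{z - w} > r/2$. Taking the infimum over $z$ and $w$ gives $\dist(F, \partial E) \ge r/2 > 0$.

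The only real content is Step 1, the inclusion $F' \subseteq U$, which is an elementary connectedness argument. One could also observe that the proof of Proposition~\ref{Arakelian_so_gosti} already allows discs of uniform radius $r/2$ centred on $F$. Routing through $F'$ avoids having to reopen that proof.
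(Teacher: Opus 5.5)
Your proof is correct, but it uses Proposition~\ref{Arakelian_so_gosti} differently from the paper.

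\textbf{The paper's route.} The paper does not apply the proposition as stated; it reopens its proof. It sets $r=\dist(F,\partial U)/3$ and chooses centres $z_n\in F$ so that the discs $D(z_n,r)$ form a locally finite cover of $F$. It then runs the same construction with the enlarged discs $D_{z_n}=D(z_n,2r)\subseteq U$. Since every $\zeta\in F$ lies in some $D(z_n,r)$, the set $H$, and hence $E=H\cup h(H)$, contains $D(\zeta,r)$ for every $\zeta\in F$. This gives $\dist(F,\partial E)\ge r$ directly from the uniform radius.

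\textbf{Your route.} You first thicken $F$ to the closed set $F'=\overline{F[r/2]}$ and check $F'\subseteq U$ by the connectedness argument. You then apply the proposition as a black box, and use its conclusion $F'\Subset E$ (not merely $F'\subseteq E$) to keep $\partial E$ at distance more than $r/2$ from $F$.

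\textbf{Comparison.} Your argument is more modular: it uses only the statement of the proposition and never touches how the discs are selected inside its proof. The paper's version is a one-line modification of that construction, and it produces $E$ explicitly as a union of equal-radius discs together with their holes. The resulting constants, $\dist(F,\partial U)/2$ for you and $\dist(F,\partial U)/3$ for the paper, play no role later. Your closing remark about discs of uniform radius is essentially the paper's proof.
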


\begin{proof}
    Denote $r = \dist(F, \partial U) / 3$ and let $(z_n)_{n \ge 1}$ be a sequence of points in $F$ such that the family $(D(z_n, r))_{n \ge 1}$ forms a locally finite cover of $F$.
    We then set $D_{z_n} = D(z_n, 2r)$ and proceed as in the proof of Proposition~\ref{Arakelian_so_gosti}.
    The above construction now yields an Arakelian set $E$, with locally finite connected components, such that $F \subseteq E \subseteq U$ and with the additional property $\dist(F, \partial E) \ge r > 0$.
\end{proof}

\section{Prescribing the shape of wandering domains in a strip} \label{sec3}

The goal of this section is to prove Theorems~\ref{stripWDescaping} and~\ref{stripWDoscillating}.
Before doing so, we first introduce the notion of uniformly accessible closed sets. 
This geometric condition ensures that one can always find an open neighbourhood containing the closed set such that the closed set remains at a positive distance away from the boundary of the neighbourhood.
This will allow us to apply results like Proposition~\ref{unbounded_injectivity} or Koebe's 1/4-theorem, see~\cite[Corollary 1.4]{Pommerenke}.

We begin with the definition of uniformly accessible closed sets.

\begin{definition}\label{uniformly_accessible}
    Let $\rho > 0$. A closed set $F \subseteq \C$ is said to be \emph{$\rho$-uniformly accessible} 
    if every connected component $V$ of $\C \setminus F$ contains an unbounded simple curve $\gamma$ such that $\gamma[\rho] \subseteq V$.
    A closed set is \emph{uniformly accessible} if it is $\rho$-uniformly accessible for some $\rho > 0$.
\end{definition}

\begin{remark}
    It follows immediately from the definition that a uniformly accessible closed set has no holes,
    hence the connected components of its interior are simply connected.
    In particular, the wandering domains obtained in Theorems~\ref{stripWDescaping} and~\ref{stripWDoscillating} are simply connected.
\end{remark}

\begin{example}
    Consider the closed sets $E_1$ and $E_2$ given by 
    \begin{align*}
        E_1 &= (-\infty, 1] \times [-2,2] \cup \set{(x,y) \in (1,\infty) \times [-2,2] \Bigm\vert  1 \le \abs{y} \le 2} \\
        E_2 &= (-\infty, 1] \times [-2,2] \cup \set{(x,y) \in (1,\infty) \times [-2,2] \Bigm\vert  1/x \le \abs{y} \le 2}
    \end{align*}
    and depicted in Figure~\ref{fig_unif_access}.
    The set $E_1$ is clearly uniformly accessible; take, for example, $\rho = 1$. 
    The set $E_2$, on the other hand, is not, since the width of the cut-out region becomes arbitrarily small as we move to the right.
\end{example}

\begin{figure}[H]
    \centering
    \includegraphics[width=0.55\textwidth]{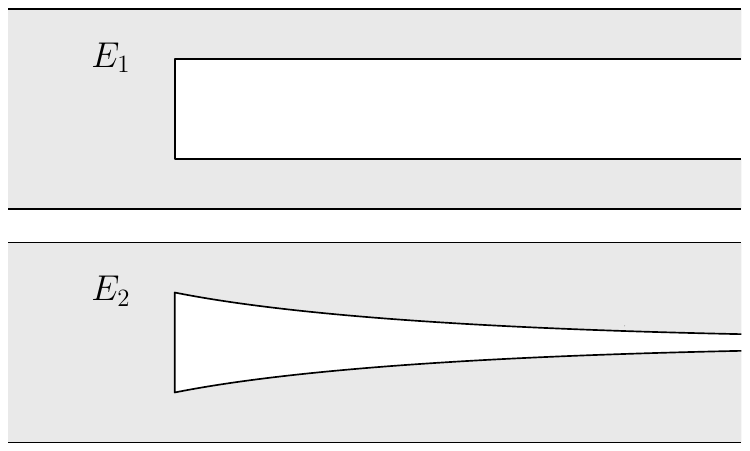}
    \caption{The set $E_1$ is uniformly accessible while the set $E_2$ is not.}
    \label{fig_unif_access} 
\end{figure}

\begin{remark}
    The notions of a closed set being Arakelian and being uniformly accessible are in general independent of one another.
    Indeed, the set $E$ defined at the beginning of Section~\ref{sec2} is uniformly accessible, but not Arakelian, since it does not have the BEH property.
    On the other hand, the set $E_2$ defined above is clearly Arakelian, but not uniformly accessible.
    Notice, however, that neither Arakelian sets nor uniformly accessible sets can have holes.  
\end{remark}

Next, we prove two lemmas regarding uniformly accessible sets that will be needed later.
The first lemma tells us how to extend a uniformly accessible set to an Arakelian set while avoiding a given point.
The second lemma shows that a uniformly accessible set can be approximated from above by a decreasing sequence of uniformly accessible sets.

\begin{lemma}\label{Strip_Arakelian}
    Let $U$ be a simply connected open set and let $F \subseteq U$ be a uniformly accessible closed set such that $\dist(F, \partial U) > 0$.
    Then, given a point $p \in \C \setminus F$, there exists an Arakelian set $E$ such that $F \subseteq E \subseteq U$, $\dist(F, \partial E) > 0$, and $p \in \C \setminus E$.
\end{lemma}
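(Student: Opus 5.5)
We would build $E$ by the same locally finite disc construction as in Proposition~\ref{Arakelian_so_gosti} and its corollary, but first we carve out a thick ``escape channel'' from $p$ to $\infty$ that the construction is forbidden to touch. Let $\rho>0$ be such that $F$ is $\rho$-uniformly accessible, and let $V$ be the connected component of $\C\setminus F$ containing $p$. By Definition~\ref{uniformly_accessible}, $V$ contains an unbounded simple curve $\gamma$ with $\gamma[\rho]\subseteq V$. Since $V$ is open and connected, we join $p$ to the starting point $\gamma(0)$ by a compact arc $\sigma\subseteq V$. The compactness of $\sigma$ gives some $\rho'\in(0,\rho]$ with $\sigma[\rho']\subseteq V$. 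Set $\Gamma=\sigma\cup\gamma$. This is a closed, connected, unbounded set containing $p$, and $\dist(\Gamma,F)\ge\rho'$.

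Next, set $r=\min\{\dist(F,\partial U),\rho'\}/3$. Choose points $z_n\in F$ such that the discs $D(z_n,r)$ form a locally finite cover of $F$, and put $D_{z_n}=D(z_n,2r)$. Each such disc lies in $U$ and is disjoint from $\Gamma[r/2]$, because $\dist(z_n,\Gamma)\ge\rho'\ge3r$. Define $H=\bigcup_n D_{z_n}$ and $E=H\cup h(H)$. The corollary to Proposition~\ref{Arakelian_so_gosti} then yields directly that $E$ is Arakelian, that $F\subseteq E$, and that $\dist(F,\partial E)\ge r$. Two things remain to be checked: that $E\subseteq U$, and that $p\notin E$.

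For $p\notin E$: we have $p\notin H$ since $p\in\Gamma$. Moreover $\Gamma$ is a connected unbounded subset of $\C\setminus H$, so $p$ lies in an unbounded component of $\C\setminus H$, and hence $p\notin h(H)$. For $E\subseteq U$: each hole of $H$ is a bounded component $W$ of $\C\setminus H$ with $\partial W\subseteq H\subseteq U$. Since $U$ is simply connected, every component of $\C\setminus U$ is unbounded, so no such component can lie inside the bounded set $W$ enclosed by $U$. This forces $W\subseteq U$. This is exactly the step already used implicitly in Proposition~\ref{Arakelian_so_gosti}, so we simply cite it.

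The main point needing care is the first step, namely producing a single channel to $\infty$ of uniform width that passes through $p$. Uniform accessibility gives the width only along $\gamma$, and we obtain it near $p$ from compactness of the connecting arc. One should also make sure that taking $r$ smaller than $\rho'/3$ does not break the local finiteness used in the BEH argument. It does not, because that argument only needs a locally finite family of closed discs. Note also that $\Gamma$ need not lie in $U$; this is harmless, since we only use that $\Gamma$ avoids $H$.
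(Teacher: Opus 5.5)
Your proof is correct and follows essentially the same route as the paper: you use uniform accessibility to build a thickened channel from $p$ to $\infty$ at positive distance from $F$, and keep $E$ off that channel. The only difference is packaging. The paper deletes the closed tube $\overline{\eta[r]}$ from $U$, checks that $W=U\setminus\overline{\eta[r]}$ is still simply connected with $\dist(F,\partial W)>0$, and applies the corollary of Proposition~\ref{Arakelian_so_gosti} as a black box. You instead rerun that construction with a smaller radius and verify $p\notin h(H)$ directly.
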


\begin{proof}
    Let $V$ be the connected component of $\C \setminus F$ that contains the point $p$.
    Since the set $F$ is uniformly accessible, there exists some $\rho > 0$ and some unbounded simple curve $\gamma$ in $V$, such that $\gamma[\rho] \subseteq V$.
    One can extend this curve to start in $p$ and so obtain a new unbounded simple curve $\eta$ in $V$ with $\dist(F, \eta) > 0$.
    By choosing $r < \rho$ small enough, we may assume that $\dist(F, \eta[r]) > 0$.
    We now consider the set $W = U \setminus \overline{\eta[r]}$. 
    Since $U$ is simply connected and $\overline{\eta[r]}$ is unbounded, the set $\C \setminus W = (\C \setminus U) \cup \overline{\eta[r]}$ only has unbounded connected components,
    so the set $W$ is simply connected.
    Furthermore, it follows from $\dist(F, \partial U) > 0$ and $\dist(F, \eta[r]) > 0$ that $\dist(F, \partial W) > 0$.
    By applying the corollary of Proposition~\ref{Arakelian_so_gosti} to the sets $F$ and $W$, we obtain an Arakelian set $E$ with all the desired properties. 
\end{proof}

\begin{lemma}\label{Okolice_Fn}
    Denote the strip $S = [0,1] \times \R \subseteq \C$ and let $F \subseteq S$ be a $\rho$-uniformly accessible closed set such that $\dist(F, \partial S) > 0$.
    Then there exists a sequence of closed sets $\left(F_n\right)_{n \ge 0}$ such that the following holds:
    \begin{enumerate}
        \item The sets $\left(F_n\right)_{n \ge 0}$ are $(\rho/2)$-uniformly accessible,
        \item $F_0 \subseteq S$ and $\dist(F_0, \partial S) > 0$,
        \item $F_{n+1} \Subset F_n$ for each $n \ge 0$,
        \item $\bigcap_{n \ge 0} F_n = F$.
    \end{enumerate}
\end{lemma}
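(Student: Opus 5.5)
The natural candidate is the closed neighbourhood $F_n = \overline{F[r_n]}$, with the radii $r_n$ to be chosen. We need $F_{n+1} \Subset F_n$ and $\bigcap_n F_n = F$, so we take a strictly decreasing sequence $r_n \to 0$. We also need $F_0$ to stay inside $S$ away from $\partial S$, so we take $r_0 < \min\set{\rho/2,\, \dist(F,\partial S)/2}$. Concretely, set $r_n = r_0 2^{-n}$.

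Properties (2)--(4) are then essentially routine.
\begin{itemize}
    \item For (2), every point of $F_0$ lies within $r_0$ of $F$, so $\dist(F_0,\partial S) \ge \dist(F,\partial S) - r_0 > 0$.
    \item For (3), $\overline{F[r_{n+1}]} \subseteq F[r_n]$ because $r_{n+1} < r_n$. The open set $F[r_n]$ lies in the interior of $F_n$, so $F_{n+1} \Subset F_n$.
    \item For (4), a point in every $F_n$ has distance $\le r_n$ from $F$ for all $n$. Hence its distance to $F$ is $0$, and it lies in $F$ since $F$ is closed.
\end{itemize}

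The real content is (1): each $F_n$ must be $(\rho/2)$-uniformly accessible. Let $W$ be a component of $\C \setminus F_n$. Then $W \subseteq \C\setminus F$, so $W$ lies in some component $V$ of $\C\setminus F$. By $\rho$-uniform accessibility, $V$ contains an unbounded simple curve $\gamma$ with $\gamma[\rho] \subseteq V$. The neighbourhood $\gamma[\rho]$ avoids $F$, and $r_n < \rho/2$, so every point of $\gamma[\rho/2]$ has distance greater than $\rho/2 - \text{(nothing)}$; more precisely, $\dist(\gamma[\rho/2], F) \ge \rho/2 > r_n$, and hence $\gamma[\rho/2] \subseteq \C \setminus F_n$. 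The set $\gamma[\rho/2]$ is connected, so it lies in a single component of $\C\setminus F_n$.

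The main obstacle is that this component need not be $W$. A single component $V$ of $\C\setminus F$ may split into several components of $\C\setminus F_n$, for instance where thin necks of $V$ get pinched off by the thickening. So we must show that every component $W$ of $\C\setminus F_n$ inside $V$ also contains a good curve.

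The key tool is a \emph{ball path}. Pick $w \in W$; then $\dist(w,F) > r_n$. For any point $z$ with $\dist(z,F) > r_n$, the disc of radius $\dist(z,F) - r_n$ around $z$ misses $F_n$.

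The hope is to show that such $W$ cannot be bounded away from a neighbourhood of $\gamma$, i.e.\ that $W$ meets $\gamma[\rho/2]$. We would try to argue this via the fact that $V$ is path connected. Join $w$ to $\gamma$ inside $V$ by a path, and track the distance to $F$ along it.

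Honestly this fails in general: a small pocket of $V$ joined to the rest by a neck narrower than $2r_n$ produces a bounded component $W$ of $\C\setminus F_n$. This would be a hole of $F_n$, which violates uniform accessibility.

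So the construction must be modified. We would replace $F_n$ by $F_n \cup h(F_n)$, filling in holes, and more generally remove all components of $\C \setminus \overline{F[r_n]}$ that do not contain a $\rho/2$-fat unbounded curve. Set
\[
F_n = \C \setminus \bigcup\set{W : W \text{ a component of } \C\setminus\overline{F[r_n]} \text{ with } W \supseteq \gamma_V[\rho/2] \text{ for some } V}.
\]
This set is closed, being the complement of a union of open sets. It is $(\rho/2)$-uniformly accessible by construction: its complement's components are exactly the kept $W$'s, each of which contains a curve $\gamma_V$ with $\gamma_V[\rho/2] \subseteq W$.

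It remains to recheck (2)--(4) for this set.
\begin{itemize}
    \item For (2), only components of the complement are added to $F_n$, and every added point lies in some $V$.
    \item For (3), $F_{n+1} \subseteq F_n$ holds because a kept component for $n$ is contained in a kept component for $n+1$, via the same $\gamma_V$. The compact-containment $\Subset$ uses $r_{n+1} < r_n$ together with a uniform margin. \emph{The delicate point is in (2) and (3)}: the discarded pockets must stay inside $S$ and away from $\partial S$, and they must shrink to nothing.
    \item For (4), the key claim is that each point $x \in V$ eventually lies in a kept component. Connect $x$ to $\gamma_V$ by a compact path in $V$; this path has positive distance $d$ from $F$. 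Once $r_n < d$ and $r_n < \rho/2$, the path together with $\gamma_V[\rho/2]$ lies in one component of $\C \setminus \overline{F[r_n]}$, so $x \notin F_n$.
\end{itemize}

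Property (2) follows because $\C\setminus S$ has two components (the half-planes left and right of $S$), each containing a fat curve, for instance a horizontal ray. Hence $\C \setminus S$ lies in kept components, so $F_n \subseteq \overline{F[r_0]}\cup(\text{stuff inside } S)$ stays at positive distance from $\partial S$.
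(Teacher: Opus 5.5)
Your prose description of the fixed construction, which discards every component of $\C\setminus\overline{F[r_n]}$ that contains no unbounded simple curve $\gamma$ with $\gamma[\rho/2]$ inside it, is exactly the paper's construction $F_n=\overline{F[r_n]}\cup\Lambda_n$. The gap is that the set you actually display is different. It keeps only the components containing $\gamma_V[\rho/2]$ for one pre-chosen curve $\gamma_V$ per component $V$ of $\C\setminus F$. For that set, with $r_0$ chosen as you allow, property (2) can fail.

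Here is a counterexample. Take $F=\{1/2\}\times\bigl((-\infty,0]\cup[\epsilon,\infty)\bigr)$, which is $1$-uniformly accessible and has $\dist(F,\partial S)=1/2$. Take any $r_0\in(\epsilon/2,1/4)$, which your rule permits.
\begin{enumerate}
\item $\C\setminus F$ is a single component $V$.
\item The whole line $\{1/2\}\times\R$ lies in $\overline{F[r_0]}$, so $\{x<0\}$ and $\{x>1\}$ lie in different components of $\C\setminus\overline{F[r_0]}$.
\item Every admissible $\gamma_V$ stays at distance $\ge 1>\epsilon/2$ from $F$, so it misses that line. Hence $\gamma_V[1/2]$ lies in only one of these two components.
\item Only that component is kept, and the other half-plane is absorbed into $F_0$.
\end{enumerate}
Your closing argument for (2), that each half-plane contains a fat horizontal ray, is valid only if \emph{every} component containing some $\rho/2$-fat unbounded curve is kept. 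So you must use that definition throughout. Your arguments for (1), for $F_{n+1}\subseteq F_n$, and for (4) go through verbatim with it; in (4), the component containing your path and $\gamma_V[\rho/2]$ is kept because it contains $\gamma_V$. Alternatively, you could keep fixed curves but choose $r_0$ depending on $F$ so that each half-plane is joined to the relevant $\gamma_V$ in $\C\setminus\overline{F[r_0]}$. That requirement is absent from your proposal.

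The second gap is the compact containment in (3), which you attribute to an unspecified ``uniform margin'' and to pockets that ``shrink to nothing.'' Neither is needed. First, $\overline{F[r_{n+1}]}\subseteq F[r_n]\subseteq\mathring F_n$. Second, let $Y$ be a discarded component of $\C\setminus\overline{F[r_{n+1}]}$. It is disjoint from every kept level-$(n+1)$ component, and each kept level-$n$ component lies inside one of those, so $Y$ is disjoint from $\C\setminus F_n$. Thus $Y\subseteq F_n$, and since $Y$ is open, $Y\subseteq\mathring F_n$. As $F_{n+1}$ is closed, $F_{n+1}\Subset F_n$.

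Likewise, the positive distance in (2) needs one more line. Set $d=\dist(F,\partial S)$. The half-planes $\{x<d-r_0\}$ and $\{x>1-d+r_0\}$ are connected, miss $\overline{F[r_0]}$, and contain fat rays. They therefore lie in kept components, which gives $\dist(F_0,\partial S)\ge d-r_0>0$.
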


\begin{proof}
    Let $(r_n)_{n \ge 0}$ be a strictly decreasing sequence of positive numbers converging to $0$, 
    and additionally assume, that $r_0 < \frac{1}{2}\min \set{\rho, \dist(F, \partial S)}$.
    For $n \ge 0$ we then define the closed set
    \[F_n = \overline{F[r_n]} \cup \Lambda_n,\]
    where $\Lambda_n$ denotes the union of all the connected components $V$ of the set $\C \setminus \overline{F[r_n]}$ for which there 
    exists no unbounded simple curve $\gamma$ in $V$ such that $\gamma[\rho/2] \subseteq V$.    
    It remains to verify that the sequence $\left(F_n\right)_{n \ge 0}$ has all the desired properties.

    \begin{enumerate}
        \item Fix some $n \ge 0$. It follows immediately from the construction that
        any connected component $V$ of the set $\C \setminus F_n$ is a connected component of the set $\C \setminus \overline{F[r_n]}$
        for which there exists an unbounded simple curve $\gamma$ in $V$ such that $\gamma[\rho/2] \subseteq V$.
        Thus, the set $F_n$ is $(\rho/2)$-uniformly accessible. 

        \item Since $F \subseteq S$ and 
        \[\dist \left(\overline{F[r_0]}, \partial S\right) \ge \dist(F, \partial S) - r_0 \ge \frac{1}{2} \dist(F, \partial S) > 0,\]
        we have $\overline{F[r_0]} \subseteq S$ or equivalently $\C \setminus S \subseteq \C \setminus \overline{F[r_0]}$.
        Let $V$ be a connected component of $\C \setminus \overline{F[r_0]}$ that contains one of the half-planes making up the set $\C \setminus S$.
        Then clearly $V$ contains an unbounded simple curve $\gamma$ such that $\gamma[\rho/2] \subseteq V$ and hence $V \subseteq \C \setminus F_0$.
        In particular, $\C \setminus S \subseteq \C \setminus F_0$ or equivalently $F_0 \subseteq S$.
        The later observation also implies $\dist(F_0, \partial S) = \dist \left(\overline{F[r_0]}, \partial S\right) > 0$.

        \item Fix some $n \ge 0$. Notice that $\overline{F[r_{n+1}]} \subseteq F[r_n] \subseteq \mathring{F}_n$, so we must only check $\Lambda_{n+1} \subseteq \mathring{F}_n$.
        It suffices to show $\Lambda_{n+1} \subseteq F_n$, so let $V$ be a connected component of $\C \setminus \overline{F[r_{n+1}]}$ making up the set $\Lambda_{n+1}$.
        Suppose that the intersection $V \cap (\C \setminus F_n)$ is non-empty and let $W$ be one of its connected components.
        Then $W \subseteq \C \setminus F_n$ is a connected set such that 
        \[\partial W \subseteq \partial V \cup \partial (\C \setminus F_n) \subseteq \overline{F[r_{n+1}]} \cup F_n \subseteq F_n.\]
        This implies that $W$ is a connected component of $\C \setminus F_n$ and thus contains an unbounded simple curve $\gamma$ such that $\gamma[\rho/2] \subseteq W$.
        It follows that $\gamma[\rho/2] \subseteq V$ which contradicts the assumption that $V \subseteq \Lambda_{n+1}$.
        Thus $V \cap (\C \setminus F_n) = \emptyset$ or equivalently $V \subseteq F_n$.

        \item By construction $F \subseteq F_n$ for all $n \ge 0$, hence $F \subseteq \bigcap_{n \ge 0} F_n$.
        To see the other inclusion, choose a point $p \in \C \setminus F$ and let $V$ be the connected component of $\C \setminus F$ that contains it.
        Since the set $F$ is $\rho$-uniformly accessible, there exists an unbounded simple curve $\gamma$ in $V$ such that $\gamma[\rho] \subseteq V$.
        One can extend this curve to start in $p$ and so obtain a new unbounded simple curve $\eta$ in $V$ with $\dist(\eta, F) > 0$.
        Let us denote $r = \dist(\eta, F)/2$.
        Since $r_n \searrow 0$, we can then choose $N \ge 0$ large enough such that $r_N < \min\set{r, \rho/2}$.
        It now follows that $\eta[r] \subseteq \C \setminus \overline{F[r_{N}]}$,
        and since the set $\eta[r]$ is connected, it is contained in a connected component $W$ of $\C \setminus \overline{F[r_{N}]}$.
        On the other hand, since $\rho < \dist(\gamma, F)$, we have $\gamma[\rho/2] \subseteq \C \setminus \overline{F[r_{N}]}$ and in particular $\gamma[\rho/2] \subseteq W$.
        Thus $W$ is not a component making up $\Lambda_N$ and so $\eta[r] \subseteq W \subseteq \C \setminus F_N$.
        In particular $p \in \C \setminus F_N$ which proves $F = \bigcap_{n \ge 0}F_n$. \qedhere
    \end{enumerate}
\end{proof}

The next lemma is a consequence of Proposition~\ref{unbounded_injectivity}. 
In particular, it gives sufficient conditions for an approximating function to map uniformly accessible sets to uniformly accessible ones.

\begin{lemma}\label{unbounded_injective_uniff_access}
    Let $U \subseteq \C$ be a simply connected open set, $L(z) = az + b$ a linear map, and $\lambda > 0$ some positive constant.
    Then there exist a constant $\delta > 0$, depending only on $L$ and $\lambda$, with the following property.
    Let $F \subseteq U$ be a uniformly accessible closed set with $\dist(F, \partial U) > 2\lambda$,
    and let $f \colon U \to \C$ be a holomorphic function satisfying $\norm{f-L}_U < \delta$. Then the following holds:
    \begin{enumerate}
        \item The function $f$ is injective on $A = \set{z \in U \mid \dist(z, \partial U) \ge \lambda}$,
        \item $f(F) \subseteq L(U)$ and $\dist(f(F), \partial L(U)) > 0$,
        \item the set $f(F)$ is closed and $\rho$-uniformly accessible, with $\rho$ independent of the function $f$.
    \end{enumerate}
\end{lemma}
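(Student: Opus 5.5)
Since $L$ is an affine map, we compare $f$ with $L$ throughout, applying Proposition~\ref{unbounded_injectivity} with $\phi = L$ and Koebe-type/Cauchy estimates. The key observation is that $L'(z) = a \neq 0$ is constant, so the derivative condition in Proposition~\ref{unbounded_injectivity} holds automatically. We cannot use the $\delta$ that proposition produces for each set separately, however, because $\delta$ must depend only on $L$ and $\lambda$, not on $F$. So we redo its argument quantitatively. By the Cauchy estimates, if $\norm{f-L}_U<\delta$, then on $\{z \in U : \dist(z,\partial U) \ge \lambda/2\}$ we have $\abs{f'(z)-a} \le 2\delta/\lambda$.

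For (1), take $z,w \in A$. If $\abs{z-w} < \lambda/2$, the segment $[z,w]$ lies in the disc of radius $\lambda/2$ about $z$, whose points are at distance at least $\lambda/2$ from $\partial U$. Hence
\[\abs{f(z)-f(w)} \ge \abs{a}\abs{z-w} - \tfrac{2\delta}{\lambda}\abs{z-w} > 0,\]
provided $\delta < \abs{a}\lambda/2$. If $\abs{z-w} \ge \lambda/2$, then
\[\abs{f(z)-f(w)} \ge \abs{a}\abs{z-w} - 2\delta \ge \abs{a}\lambda/2 - 2\delta > 0,\]
provided $\delta < \abs{a}\lambda/4$. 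For (2), note that $\dist(L(F),\partial L(U)) = \abs{a}\dist(F,\partial U) > 2\abs{a}\lambda$ and $\abs{f(z)-L(z)}<\delta$. Thus every point of $f(F)$ is within $\delta$ of $L(F)$, so $f(F) \subseteq L(U)$ with distance at least $2\abs{a}\lambda - \delta > 0$ from $\partial L(U)$. For closedness, note that $F \subseteq A$, so $f$ is injective on $F$, and $\abs{f(z)} \to \infty$ as $\abs{z}\to\infty$ because $f = L + O(\delta)$. Then $f|_F$ is proper, so $f(F)$ is closed.

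Part (3) is the main obstacle. Let $F$ be $\rho_F$-uniformly accessible. Here the constant $\rho$ must be independent of $f$, but it may depend on $F$. I would first check whether, on $A' = \{\dist(\cdot,\partial U) \ge \lambda/2\}$ (possibly after shrinking the $\lambda$ in (1)), the map $f$ is injective and bi-Lipschitz with constants $\abs{a}\pm 2\delta/\lambda$, both controlled by $L,\lambda$. I would then transport curves. Let $V'$ be a component of $\C \setminus f(F)$. Components of $\C\setminus f(F)$ meeting $f(A')$ correspond to components of $\C \setminus F$ via $f$, since $f$ is a homeomorphism of a neighbourhood of $F$ onto its image by (1) and the argument principle. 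There is also an outer part near $\partial L(U)$, which is connected to the region outside $f(F[\lambda])$.

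For a component $V$ of $\C\setminus F$ with curve $\gamma$ and $\gamma[\rho_F]\subseteq V$, I would modify $\gamma$ so that it stays in $\{\dist(\cdot,\partial U)\ge\lambda\}$ as long as it remains in $U$, leaving $U$ through $U \setminus F[\lambda]$ otherwise, since $\dist(F,\partial U) > 2\lambda$. On the part of the curve inside $A'$, the bi-Lipschitz bound shows that the image of $\gamma$ under $f$ has a $\rho$-neighbourhood in $V'$ with
\[\rho = \tfrac{1}{2}\min\set{\abs{a}\rho_F,\ \abs{a}\lambda}\]
say, after requiring $\delta$ small. On any part outside $U$, we use a curve in $\C\setminus L(\overline{U})$ together with the fact that $f(F)$ stays at distance at least $\abs{a}\lambda$ from $\partial L(U)$. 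The delicate point, where I would spend most effort, is gluing these two regimes into one simple unbounded curve with a uniform neighbourhood. This gluing needs only $\dist(f(F),\partial L(U)) \ge \abs{a}\lambda$ and the uniform displacement $\abs{f - L} < \delta$, both of which are independent of $f$.
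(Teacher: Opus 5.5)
Your proofs of (1) and (2) are correct. Your Cauchy-estimate argument for (1) is more self-contained than the paper's. The paper applies Proposition~\ref{unbounded_injectivity} to $A$, so its $\delta$ a priori also depends on $U$, whereas yours gives $\delta<\abs{a}\lambda/4$ explicitly.

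The gap is in (3), which is not actually proved. You flag the gluing step yourself: joining the pushed-forward part of $\gamma$ to a piece in $\C\setminus L(\overline{U})$ while keeping a uniform neighbourhood inside the right component. That step is never carried out, and the claimed correspondence between components of $\C\setminus f(F)$ and of $\C\setminus F$ is not justified. Since $f$ is only defined on $U$, the parts of $\gamma$ that leave $A$ cannot be transported. Nothing in the sketch shows that the transported pieces coming from one component $V$ of $\C\setminus F$ land in a single component of $\C\setminus f(F)$. Nor does it show that every component of $\C\setminus f(F)$ is reached this way. Note also that your argument never uses that $U$ is simply connected, and that hypothesis is exactly what the proof needs.

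The missing idea is a dichotomy that makes gluing unnecessary. Let $V'$ be a component of $\C\setminus f(F)$. Since $\partial V'\subseteq f(F)\subseteq f(A)$, either $V'\subseteq f(A)$, or $V'$ contains an entire component $W$ of $\C\setminus f(A)$.

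\emph{First case.} The map $f|_A\colon A\to f(A)$ is a homeomorphism onto a closed set, by injectivity plus your properness argument. Hence $(f|_A)^{-1}(V')$ is a full component of $\C\setminus F$ lying in $A$. Its accessing curve $\gamma$ satisfies $\gamma[\rho_0]\subseteq A$, so it never leaves $A$. Since $f$ is univalent on each $\mathring{D}(p,\rho_0)$ with $\abs{f'(p)}\ge\abs{a}/2$, Koebe's $1/4$-theorem gives $(f\circ\gamma)[\rho_0\abs{a}/8]\subseteq V'$. Your bi-Lipschitz bound would serve equally well here.

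\emph{Second case.} Simple connectivity of $U$ means $\C\setminus A=(\C\setminus U)[\lambda]$ has only unbounded components. So $A$, and hence $f(A)$, has no holes, and $W$ contains an unbounded simple curve $\gamma$. We have $\partial f(A)=f(\partial A)$, and once $\delta\le\abs{a}\lambda/3$,
\[\dist(f(F),f(\partial A))\ge\abs{a}\dist(F,\partial A)-2\delta\ge\abs{a}\lambda/3.\]
So every point of $\partial V'\subseteq f(F)$ lies at distance at least $\abs{a}\lambda/3$ from $\gamma\subseteq\C\setminus f(A)$, which gives $\gamma[\abs{a}\lambda/3]\subseteq V'$.

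This proves (3) with $\rho=\min\set{\rho_0\abs{a}/8,\abs{a}\lambda/3}$, which depends on $F$ only through $\rho_0$ and not on $f$.
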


\begin{proof}
    For a closed set $F$ as described above, it is easy to verify $F \subseteq A \subseteq U$ and $\dist(F, \partial A) \ge \lambda$.
    Furthermore, notice that $\C \setminus A = (\C \setminus U)[\lambda]$. 
    Since the set $U$ is simply connected, all the connected components of the set $\C \setminus U$ are unbounded, hence the same is true for the connected components of $(\C \setminus U)[\lambda]$.
    It follows that the set $A$ has no holes.
    
    Let $\delta_1 > 0$ be the constants obtained by applying Proposition~\ref{unbounded_injectivity} to the closed set $A \subseteq U$ and the map $L \colon U \to L(U)$.
    Furthermore, by using Cauchy estimates, we can choose $\delta_2 > 0$, such that $\norm{f - L}_U < \delta_2$ implies $\inf_{z \in A}\abs{f'(z)} \ge \abs{a}/2$.
    Finally, assume that the set $F$ is $\rho_0$-uniformly accessible. We then define 
    \[\delta = \min \set{\delta_1,\, \delta_2, \, \lambda|a|/3} \quad \text{and} \quad \rho = \min \set{\rho_0|a|/8 ,\,\lambda|a|/3}.\]
    We claim that $\delta$ is our desired constants. 
    Notice that $\delta$ only depend on the map $L$ and the constant $\lambda$, while $\rho$ only depends on the map $L$ and the constants $\lambda$ and $\rho_0$.

    Let $f \colon U \to \C$ be a holomorphic function such that $\norm{f-L}_U < \delta$. 
    By Proposition~\ref{unbounded_injectivity} the function $f$ is injective on $A$, $f(A) \subseteq L(U)$, and $\dist(f(A), \partial L(U)) > 0$.
    From Cauchy estimates it follows that $\inf_{z \in A}\abs{f'(z)} \ge \abs{a}/2$.
    Furthermore, notice that $\norm{f-L}_U < \delta$ implies that $f$ maps unbounded subsets of $U$ to unbounded sets.
    From this and the injectivity of $f$ on $A$, one can deduce that the set $f(A)$ is closed and that the restriction $f \vert_A \colon A \to f(A)$ is a homeomorphism.
    In particular, this implies $\partial f(A) = f(\partial A)$.

    It is clear that items (1) and (2) are satisfied, so it remains to verify item (3). We will show that the closed set $f(F)$ is $\rho$-uniformly accessible.
    To this end, let $V$ be a connected component of the set $\C \setminus f(F)$.
    Since $\partial V \subseteq f(F) \subseteq f(A)$, we either have $V \subseteq f(A)$ or that $V$ contains a connected component of the set $\C \setminus f(A)$.
    Treating these two cases will conclude the proof.

    First, suppose that $V \subseteq f(A)$. Since $f \vert_A$ is a homeomorphism and $V$ is a connected component of $\C \setminus f(F)$,
    it is easy to verify that the set $\left(f \vert_A\right)^{-1}(V)$ is a connected component of $\C \setminus F$.
    Since the set $F$ is $\rho_0$-uniformly accessible, the set $\left(f \vert_A\right)^{-1}(V)$ contains an unbounded simple curve $\gamma$ such that $\gamma[\rho_0] \subseteq \left(f \vert_A\right)^{-1}(V)$.
    It follows that $f \circ \gamma$ is an unbounded simple curve in $V$.
    Now let $p \in \gamma$. By assumption $f(\mathring{D}(p, \rho_0)) \subseteq V$, 
    so applying Koebe's 1/4-theorem together with the estimate $\abs{f'(p)} \ge |a|/2$ yields $\mathring{D}(f(p), \rho_0|a|/8) \subseteq f(\mathring{D}(p, \rho_0)) \subseteq V$.
    This shows that $(f \circ \gamma)[\rho] \subseteq V$.
    
    Next, suppose that $W$ is a connected component of the set $\C \setminus f(A)$ such that $W \subseteq V$.
    Since the set $A$ has no holes and $f \vert_A$ is a homeomorphism, the same has to hold for $f(A)$.
    It follows that the set $W$ is unbounded, hence it contains an unbounded simple curve $\gamma$.
    We claim that $\gamma[\dist(F, \partial A)|a|/3] \subseteq V$. 
    If this is not the case, there exists a point $p \in \partial V$ such that $\dist(p, \gamma) < \dist(F, \partial A)|a|/3$.
    Furthermore, since $\partial V \subseteq f(F)$ and $\gamma \subseteq \C \setminus f(A)$, 
    this implies $\dist(f(F), \partial f(A)) < \dist(F, \partial A)|a|/3$.
    On the other hand, we have the estimate
    \begin{align*}
        \dist(f(F), \partial f(A)) &= \dist(f(F), f(\partial A)) > \dist(L(F), L(\partial A)) - 2 \delta \\
        &= \dist(F, \partial A)|a| - 2 \delta \ge \dist(F, \partial A)|a|/3,
    \end{align*}
    which is a contradiction. Since $\rho \le \lambda|a|/3 \le \dist(F, \partial A)|a|/3$, this proves $\gamma[\rho] \subseteq V$.
\end{proof}

Finally, we will need the following lemma on the approximation of iterates. For a proof of the statement, see~\cite[Lemma 2.5]{EremenkoConj}.

\begin{lemma}\label{approxIter}
    Let $U \subseteq \C$ be an open set and let $h \in \mathcal{O}(U)$ be a holomorphic function. 
    Suppose that $F \subseteq U$ is a closed set and that $n \ge 1$ is such that $h^k(F)$ is defined and a subset of $U$ for all $0 \le k < n$.
    Denoting $\widehat{F} = \bigcup_{k=0}^{n-1} h^k(F)$, 
    suppose further that there exists an open neighbourhood $V \subseteq U$ of $\widehat{F}$,
    such that $\dist(\widehat{F}, \partial V) > 0$, and that $\abs{h'}$ is uniformly bounded above on $V$.
    Then for every $\varepsilon > 0$, there exists $\delta > 0$ with the following property.
    If $f \in \mathcal{O}(U)$ is a holomorphic function satisfying $\norm{f - h}_U < \delta$, then we have, for all $0 \le k \le n$, that 
    \[\norm{f^k - h^k}_F < \varepsilon.\] 
\end{lemma}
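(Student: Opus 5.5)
The plan is to argue by induction on $k$, using a uniform-continuity (Lipschitz) estimate for $h$ near the orbit $\widehat{F}$. Let $r = \dist(\widehat{F}, \partial V) > 0$ and $M = \sup_V \abs{h'} < \infty$.

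\emph{Step 1: a Lipschitz estimate.} If $w \in h^k(F)$ for some $0 \le k < n$ and $\abs{z - w} < r$, then the segment $[w,z]$ lies in the disc $\mathring{D}(w,r) \subseteq V$. Integrating $h'$ along this segment gives
\[\abs{h(z) - h(w)} \le M\abs{z-w}.\]
This convexity-of-discs argument is the only place where the hypothesis $\dist(\widehat{F}, \partial V) > 0$ enters.

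\emph{Step 2: choice of tolerances.} Fix $\varepsilon > 0$ and assume without loss of generality that $\varepsilon < r$. Define $\varepsilon_n = \varepsilon$ and, going backwards for $k = n-1, \ldots, 0$,
\[\varepsilon_k = \frac{\varepsilon_{k+1}}{2(M+1)} .\]
Then $\varepsilon_0 \le \varepsilon_1 \le \dots \le \varepsilon_n = \varepsilon < r$. Set $\delta = \varepsilon_0 > 0$; wherever $\varepsilon_k$ appears below one could equally use $\delta \le \varepsilon_k/2$.

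\emph{Step 3: induction.} Let $f \in \mathcal{O}(U)$ with $\norm{f-h}_U < \delta$. We prove by induction on $k$ that for every $z \in F$ the point $f^k(z)$ is defined, lies in $U$ when $k<n$, and satisfies $\abs{f^k(z) - h^k(z)} < \varepsilon_k$.

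The case $k=0$ is trivial.

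For the inductive step, suppose the claim holds for some $k < n$. Put $w = h^k(z) \in h^k(F)$ and $u = f^k(z)$. Since $\abs{u-w} < \varepsilon_k < r$, we have $u \in V \subseteq U$, so $f(u)$ is defined. Now split
\[\abs{f(u) - h(w)} \le \abs{f(u) - h(u)} + \abs{h(u) - h(w)} < \delta + M\varepsilon_k \le \varepsilon_k(1+M) < \varepsilon_{k+1}.\]
The first term is bounded using $\norm{f-h}_U < \delta$, the second by Step 1, and the last inequality follows from the choice of $\varepsilon_{k+1} = 2(M+1)\varepsilon_k$.

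For $k+1 < n$ we also need $f^{k+1}(z) \in U$. This follows because $h^{k+1}(z) \in \widehat{F}$ and $\varepsilon_{k+1} < r$, so $f^{k+1}(z)$ lies in $V \subseteq U$.

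Taking the supremum over $z \in F$ gives $\norm{f^k - h^k}_F \le \varepsilon_k \le \varepsilon$ for all $0 \le k \le n$. To get the strict inequality, shrink $\varepsilon$ slightly at the start.

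\emph{Main obstacle.} The only delicate point is keeping the perturbed orbit $f^k(z)$ inside the region where $f$ is defined and where $h$ is Lipschitz. This is handled by requiring $\varepsilon < r$ and by the positive distance from $\widehat{F}$ to $\partial V$. The bound is uniform over unbounded $F$ because both $M$ and $r$ are global constants.
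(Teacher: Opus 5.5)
Your proof is correct. By induction, $\abs{f^k(z)-h^k(z)}<\varepsilon_k<r$, so the perturbed orbit stays in the discs $\mathring{D}(h^k(z),r)\subseteq V$, where $h$ is $M$-Lipschitz; hence the error grows by at most a factor $1+M$ plus $\delta$ per step, and $f^k(F)\subseteq U$ for $k<n$. The paper gives no proof of its own and simply cites \cite[Lemma 2.5]{EremenkoConj}; your induction is a natural, self-contained proof of the statement, so nothing is missing.
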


We are now ready to prove Theorems~\ref{stripWDescaping} and~\ref{stripWDoscillating}.

\begin{proof}[Proof of Theorem~\ref{stripWDescaping}]
    We denote $D = D(-3,1)$, $P_0 = \emptyset$, as well as 
    \[S_n = [-1 + 3(n-1), 1 + 3(n-1)] \times \R \quad \text{and} \quad P_n = [-3 - (1+3n), -3 + (1+3n)] \times \R\]
    for $n \ge 1$. Observe, that $S \subseteq S_1$, $\bigcup_{k = 1}^n S_k \subseteq P_n$, and $P_n \cap S_{n+1} = \emptyset$. 
    Next, we apply Lemma~\ref{Okolice_Fn} to the sets $F$ and $S$ to obtain a decreasing sequence of uniformly accessible closed sets $(F_n)_{n \ge 0}$ such that $\bigcap_{n \ge 0} F_n = F$.
    We also chose a sequence of points $(p_n)_{n \ge 1}$ such that $p_n \in \mathring{F}_{n-1} \setminus F_n$ and $\omega((p_n)_{n \ge 1}) = \partial F$, that is the sequence accumulates at every point of $\partial F$.
    Finally, we define the translation $\tau(z) = z+3$.

    Our goal will be to construct an entire function $f \in \mathcal{O}(\C)$ with the following properties:
    \begin{enumerate}[\indent (a)]
        \item $f^n(F) \subseteq S_{n+1}$ for all $n \ge 1$,
        \item $f(D) \subseteq \mathring{D}$ and $f^{n}(p_n) \in D$ for all $n \ge 1$,
        \item $f^n$ is injective on $F$ for all $n \ge 1$,
        \item $f^n$ maps unbounded subsets of $F$ to unbounded sets for all $n \ge 1$.
    \end{enumerate}
    Item (a) implies that the points in $F$ escape uniformly to infinity under iteration,
    so in particular, the set $\mathring{F}$ belongs to the Fatou set.
    On the other hand, item (b) implies that the set $D$ is contained in some attracting basin $\mathcal{A}$ and that $f^k(p_n) \in \mathcal{A}$ for all $k \ge n \ge 1$.
    Since the points $(p_n)_{n \ge 1}$ accumulate on $\partial F$, the family of iterates $(f^n)_{n \ge 1}$ cannot be normal on any neighbourhood of a boundary point of $F$, and $\partial F$ belongs to the Julia set.
    The same argument applies to the boundary of any iterate $f^n(F)$, hence the connected components of $\mathring{F}$ are escaping wandering domains.
    Finally, item (c) guarantees the injectivity of iterates, while item (d) ensures that unboundedness is preserved under iteration.
    Thus, the theorem holds, provided we can construct such a function.

    The function $f$ will be given as the limit of a sequence of entire functions $(f_n)_{n \ge 1}$ which will be constructed inductively using Arakelian approximation.
    At the $n$-th step of the construction, we will define an Arakelian set $\Pi_n$, an open set $V_n \subseteq \Pi_n$, and a holomorphic function $h_n \in \mathcal{O}(\Pi_n)$ as well as positive constants $\varepsilon_n > 0$ and $\lambda_n > 0$.
    The function $f_n$ will then be obtained by approximating the function $h_n$ on the set $\Pi_n$ up to an error of at most $\varepsilon_n > 0$.
    In particular, for $n \ge 1$, the following properties will hold:
    \begin{enumerate}
        \item $\varepsilon_1 < 1/4$ and $\varepsilon_{n+1} < \varepsilon_n/2$,
        \item $P_{n-1} \subseteq \Pi_n \subseteq P_n$,
        \item $\abs{f_n'}$ is uniformly bounded above on $V_n$ by some positive constant,
        \item for every $0 \le k \le n-1$ we have $f_n^k(F_n) \subseteq V_n$ with $\dist(f_n^k(F_n), \partial V_n) \ge \lambda_n$,
        \item $f_n^n(F_n) \subseteq S_{n+1}$ with $\dist(f_n^n(F_n), \partial S_{n+1}) > 0$,
        \item the function $f_n$ is injective on $V_n$,
        \item for every $k \ge n$ the set $f_n^n(F_k)$ is closed and uniformly accessible.
    \end{enumerate}
    Furthermore, if $g \in \mathcal{O}(\C)$ is an entire function satisfying $\norm{g-f_n}_{\Pi_n} < \varepsilon_n$, 
    it also enjoys properties (3) through (7). In particular, the bounds on $\abs{g'}$ are independent of the function $g$.

    The remainder of the proof is divided into three steps: the base case, the inductive step, and verifying that the limit of the functions $(f_n)_{n \ge 1}$ satisfies properties (a) through (d).

    \subsection*{The base case:}
    Recall that the closed set $F_1$ is uniformly accessible, $\dist(F_1, \mathring{S}_1) > 0$, and that the point $p_1 \in \mathring{S}_1$ is disjoint from the set $F_1$.
    We apply Lemma~\ref{Strip_Arakelian} to the point $p_1$ and the sets $F_1$ and $\mathring{S}_1$ to obtain an Arakelian set $E_1$ such that $F_1 \subseteq E_1 \subseteq \mathring{S}_1$, $\dist(F_1, \partial E_1) > 0$, and $p_1 \in \C \setminus E_1$.
    We then denote $\lambda_1 = \dist(F_1, \partial E_1)/3$ and define the sets 
    \[A_1 = \set{z \in E_1 \mid \dist(z, \partial E_1) \ge \lambda_1} \quad \text{and} \quad V_1 = \mathring{A}_1.\]
    Note $\dist(A_1, \partial E_1) = \lambda_1$ and $\dist(F_1, \partial A_1) \ge 2\lambda_1$. 
    Since $p_1 \in \mathring{S}_1$, we can choose a closed disc $B_1 \subseteq \mathring{S}_1 \setminus E_1$ centered in $p_1$.
    Finally, we define the set 
    \[\Pi_1 = D \cup E_1 \cup B_1.\]
    Since $D$, $E_1$, and $B_1$ are pairwise disjoint Arakelian sets, their union $\Pi_1$ is also Arakelian.
     
    Next, we define the function $h_1 \colon \Pi_1 \to \C$ by setting
    \[h_1(z) = \begin{cases} -3;& z \in D \\ z+3;& z \in E_1 \\ -3;& z \in B_1 \end{cases},\]
    see Figure~\ref{StripEscWD}.
    Notice, that the functions $h_1$ extends holomorphically to an open neighbourhood of $\Pi_1$, that is $h_1 \in \mathcal{O}(\Pi_1)$.

    \begin{figure}[ht]
        \centering
        \includegraphics[width=0.8\textwidth]{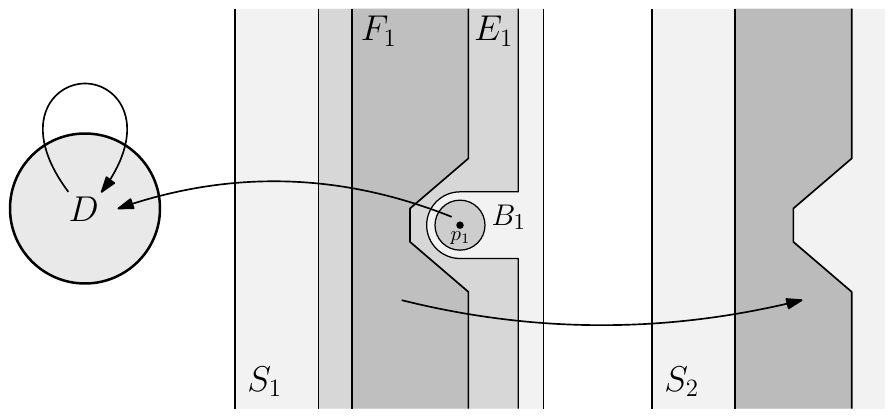}
        \caption{The construction of the map $h_1$ in the proof of Theorem~\ref{stripWDescaping}.}
        \label{StripEscWD} 
    \end{figure}
    
    We now chose $0 < \varepsilon_1 < 1/4$ small enough, so that if an entire function $g \in \mathcal{O}(\C)$ satisfies $\norm{g- h_1}_{\Pi_1} < 2\varepsilon_1$ the following holds:
    \begin{enumerate}[\indent (i)]
        \item $\abs{g'}$ is uniformly bounded above on $V_1$ by some positive constant,
        \item $g(D) \subseteq \mathring{D}$ and $g(p_1) \in D$,
        \item $g(F_1) \subseteq S_2$ with $\dist(g(F_1), \partial S_2) > 0$,
        \item the function $g$ is injective on $V_1$,
        \item for every $k \ge 1$ the set $g(F_k)$ is closed and uniformly accessible.
    \end{enumerate} 
    Since $\dist(V_1, \partial E_1) = \lambda_1$ and $\abs{h_1'} = 1$ on $E_1$, item (i) can be guaranteed by using Cauchy estimates.
    Item (ii) holds, since we chose $\varepsilon_1 < 1/4$. 
    Finally, we apply Lemma~\ref{unbounded_injective_uniff_access} to the open set $\mathring{E}_1$, the translation $\tau$, and the constant $\lambda_1$. 
    Notice that this is possible, since the set $E_1$ is Arakelian, so the set $\mathring{E}_1$ is simply connected. 
    By taking $\varepsilon_1$ small enough so that the conclusion of the lemma holds, items (iii), (iv), and (v) are then also satisfied.
    Note, that the bounds on $\abs{g'}$ do not depend on the function $g$.
    
    Finally, we use the Arakelian approximation theorem to obtain an entire function $f_1 \in \mathcal{O}(\C)$ such that $\norm{f_1 - h_1}_{\Pi_1} < \varepsilon_1$.
    With that, items (1) through (7) are satisfied for the case $n=1$.

    \subsection*{The inductive step:} Suppose, we have already completed the $n$-th step of the inductive proces.
    Item (5) of the inductive hypothesis tells us that $f^n_n(F_{n}) \subseteq S_{n+1}$ with $\dist(f^n_n(F_{n}), \partial S_{n+1}) > 0$,
    while items (4) and (6) imply that the function $f_n^n$ is injective on $F_n$, hence $f^n_n(F_{n+1})$ and $f^n_n(p_{n+1})$ are disjoint subsets of $f^n_n(F_n)$.
    Finally, by item (7) of the induction hypothesis, the set $f^n_n(F_{n+1})$ is closed and uniformly accessible.
    
    We apply Lemma~\ref{Strip_Arakelian} to the point $f^n_n(p_{n+1})$ and the sets $f^n_n(F_{n+1})$ and $\mathring{S}_{n+1}$ to obtain an Arakelian set $E_{n+1}$ such that $f^n_n(F_{n+1}) \subseteq E_{n+1} \subseteq \mathring{S}_{n+1}$, $\dist(f^n_n(F_{n+1}), \partial E_{n+1}) > 0$, and $f^n_n(p_{n+1}) \in \C \setminus E_{n+1}$.
    We then denote 
    \[\lambda_{n+1} = \min \set{\dist(f^n_n(F_{n+1}), \partial E_{n+1})/3, \lambda_n}\]
    and define the sets 
    \[A_{n+1} = \set{z \in E_{n+1} \mid \dist(z, \partial E_{n+1}) \ge \lambda_{n+1}} \quad \text{and} \quad V_{n+1} = V_n \cup \mathring{A}_{n+1}.\]
    Note $\dist(A_{n+1}, \partial E_{n+1}) = \lambda_{n+1}$ and $\dist(f^n_n(F_{n+1}), \partial A_{n+1}) \ge 2\lambda_{n+1}$.
    Since $f^n_n(p_{n+1}) \in \mathring{S}_{n+1}$, we can choose a closed disc $B_{n+1} \subseteq \mathring{S}_{n+1} \setminus E_{n+1}$ centered in $f^n_n(p_{n+1})$.
    Finally, we define the set 
    \[\Pi_{n+1} = P_n \cup E_{n+1} \cup B_{n+1}.\]
    Since $P_n$, $E_{n+1}$ and $B_{n+1}$ are pairwise disjoint Arakelian sets, their union $\Pi_{n+1}$ is also Arakelian.

    Next, we define the function $h_{n+1} \colon \Pi_{n+1} \to \C$ by setting
    \[h_{n+1}(z) = \begin{cases} f_n;& z \in P_n \\ z+3;& z \in E_{n+1} \\ -3;& z \in B_{n+1} \end{cases}.\]
    Notice, that the functions $h_{n+1}$ extends holomorphically to an open neighbourhood of $\Pi_{n+1}$, that is $h_{n+1} \in \mathcal{O}(\Pi_{n+1})$.
    
    We now chose $0 < \varepsilon_{n+1} < \varepsilon_n/2$ small enough, so that if an entire function $g \in \mathcal{O}(\C)$ satisfies $\norm{g- h_{n+1}}_{\Pi_{n+1}} < 2\varepsilon_{n+1}$, the following holds:
    \begin{enumerate}[\indent (i)]
        \item $\abs{g'}$ is uniformly bounded above on $V_{n+1}$ by some positive constant,
        \item $g^{n+1}(p_{n+1}) \in D$,
        \item for every $0 \le k \le n$ we have $g^k(F_{n+1}) \subseteq V_{n+1}$ with $\dist(g^k(F_{n+1}), \partial V_{n+1}) \ge \lambda_{n+1}$,
        \item $g^{n+1}(F_{n+1}) \subseteq S_{n+2}$ with $\dist(g^{n+1}(F_{n+1}), \partial S_{n+2}) > 0$,
        \item the function $g$ is injective on $V_{n+1}$,
        \item for every $k \ge n+1$ the set $g^{n+1}(F_k)$ is closed and uniformly accessible.
    \end{enumerate} 
    Since $\Pi_n \subseteq P_n$ and $\varepsilon_{n+1} < \varepsilon_n/2$, the function $g$ will satisfy $\norm{g- f_n}_{\Pi_{n}} < \varepsilon_{n}$,
    so by the induction hypothesis, it will enjoy properties (3) through (7).
   
    For item (i), notice that item (3) of the induction hypothesis implies that $\abs{g'}$ is uniformly bounded above on $V_{n}$,
    while the uniform bound on $\mathring{A}_{n+1}$ can be guaranteed by using Cauchy estimates, since $\dist(A_{n+1}, \partial E_{n+1}) = \lambda_{n+1}$ and $\abs{h_{n+1}'} = 1$ on $E_{n+1}$.
   
    Notice that $\abs{h_{n+1}'}$ is uniformly bounded above on $V_n$ by item (3) of the induction hypothesis and that we have 
    \[\widehat{F}_n = \bigcup_{k=0}^{n-1}h^k_{n+1}(F_n) \subseteq V_n \subseteq \mathring{\Pi}_{n+1}\] 
    with $\dist(\widehat{F}_n, V_n) \ge \lambda_n$ by item (4) of the induction hypothesis.
    Thus we may apply Lemma~\ref{approxIter} and by choosing $\varepsilon_{n+1}$ sufficiently small, we can guarantee that 
    \[g^n(p_{n+1}) \in B_{n+1} \quad \text{and} \quad \norm{g^n - f_n^n}_{F_{n+1}} < \lambda_{n+1}.\]
    Since $h_{n+1} = -3$ on $B_{n+1}$ and $\varepsilon_{n+1} < 1/4$ by item (1) of the induction hypothesis, the former implies $g^{n+1}(p_{n+1}) \in \mathring{D}$ and so item (ii) holds.
    On the other hand, since $g^k(F_{n+1}) \subseteq V_n$ with $\dist(g^k(F_{n+1}), \partial V_n) \ge \lambda_n \ge \lambda_{n+1}$ holds for all $0 \le k \le n-1$ by item (4) of the induction hypothesis,
    the later estimate together with ${\dist(f^n_n(F_{n+1}), \partial A_{n+1}) \ge 2\lambda_{n+1}}$ implies that $g^n(F_{n+1}) \subseteq \mathring{A}_{n+1}$ with $\dist(g^n(F_{n+1}), \partial A_{n+1}) \ge \lambda_{n+1}$,
    thus proving item (iii). 

    Next, we apply Lemma~\ref{unbounded_injective_uniff_access} to the open set $\mathring{E}_{n+1}$, the translation $\tau$, and the constant $\lambda_{n+1}$.
    Notice that this is possible, since the set $E_{n+1}$ is Arakelian and so the set $\mathring{E}_{n+1}$ is simply connected. 
    We take $\varepsilon_{n+1}$ small enough, so that the conclusion of the lemma holds.
    It then immediately follows that a holomorphic function $g$ satisfying $\norm{g- h_{n+1}}_{\Pi_{n+1}} < 2\varepsilon_{n+1}$ will be injective on $\mathring{A}_{n+1}$,
    and since it is injective on $V_n$ by item (6) of the induction hypothesis, we have item (v).
    It also follows that $g^{n+1}(F_{n+1}) \subseteq S_{n+2}$ with $\dist(g^{n+1}(F_{n+1}), \partial S_{n+2}) > 0$, so item (iv) holds.
    Since $g^n(F_k)$ is closed and uniformly accessible for all $k \ge n+1$ by item (7) of the induction hypothesis, item (vi) is also immediate.
    Note, that the bounds on $\abs{g'}$ do not depend on the function $g$.
    
    Finally, we use the Arakelian approximation theorem to obtain an entire function $f_{n+1} \in \mathcal{O}(\C)$ such that $\norm{f_{n+1} - h_{n+1}}_{\Pi_{n+1}} < \varepsilon_{n+1}$.
    From the above construction, it is clear that items (1) through (7) are satisfied for the case $n + 1$.
    
    \subsection*{Verification:} Condition (1) implies that for every $n \ge 1$ the functions $(f_k)_{k \ge n}$ form a Cauchy sequence on $\Pi_n$, and by condition (2) we have $\bigcup_{n=1}\Pi_n = \C$.
    Thus, the sequence $(f_n)_{n \ge 1}$ converges uniformly on compact sets to some entire function $f \in \mathcal{O}(\C)$.
    Given $n \ge 1$ we then have $f = f_n + \sum_{k=n}^\infty f_{k+1}-f_k$, which yields the estimate 
    \[\norm{f - f_n}_{\Pi_n} \le \sum_{k=n}^\infty \norm{f_{k+1}-f_k}_{\Pi_n} < \sum_{k=n}^\infty \varepsilon_{k+1} < \varepsilon_n \sum_{k=n}^\infty 2^{k+1-n} < \varepsilon_n,\]
    so by the induction hypothesis, the function $f$ enjoys the same properties as $f_n$.
    In particular, we have $f^n(F) \subseteq S_{n+1}$ for all $n \ge 1$, $f(D) \subseteq \mathring{D}$, and $f^n(p_n) \in D$ for all $n \ge 1$, thus proving items (a) and (b).
    Since $f^k(F) \subseteq V_n$ for all $0 \le k \le n-1$ and $f$ is injective on $V_n$, it follows that $f^n$ is injective on the set $F$, so item (c) holds.
    Finally, since 
    \[\norm{f - \tau}_{V_n} \le \norm{f - h_n}_{E_n} \le \norm{f - f_n}_{E_n} + \norm{f_n - h_n}_{E_n} < 2\varepsilon_n,\]
    the function $f$ is uniformly close to a translation on $V_n$ and therefore maps unbounded subsets of $V_n$ to unbounded sets.
    In particular, this implies that the function $f^n$ maps unbounded subsets of $F$ to unbounded ones, proving item (d). 
\end{proof}

\begin{proof}[Proof of Theorem~\ref{stripWDoscillating}]
    We denote $D = D(-3,1)$, $P_0 = \emptyset$, as well as 
    \[S_n = [-1 + 3(n-1), 1 + 3(n-1)] \times \R \quad \text{and} \quad P_n = [-3 - (1+3n), -3 + (1+3n)] \times \R\]
    for $n \ge 1$. Observe that $\bigcup_{k = 1}^n S_k \subseteq P_n$ and $P_n \cap S_{n+1} = \emptyset$. 
    Next, we define the recursive sequences 
    \begin{alignat*}{2}
        c_{n+1} &= c_n + d_n/4, \quad & c_1 &= -1, \\
        d_{n+1} &= d_n/4,  & \quad d_1 &= 1,
    \end{alignat*}
    and denote the intervals 
    \[I_n = [c_n, c_n + 2d_n] \quad I_n^- = [c_n, c_n + d_n] \quad I_n^+ = [c_n + d_n, c_n + 2d_n],\]
    notice $I_n = I_n^- \cup I_n^+$. We then go on to define 
    \[C_n = I_n \times \R \qquad  C_n^{+} = I_n^{+} \times \R \qquad C_n^{-} = I_n^{-} \times \R\]
    and observe that $C_n = C_n^- \cup C_n^+$, $C_1 = S_1$, and $C_1^+ = S$.
    Furthermore, since 
    \[I_{n+1} = [c_{n+1}, c_{n+1} + 2d_{n+1}] = [c_n + d_n/4, c_n + 3d_n/4] \subseteq [c_n, c_n + d_n] = I_n^-,\]
    it follows that $C_{n+1} \subseteq C_n^-$ with $\dist(C_{n+1}, \partial C_n^-) > 0$.
    We fix a point $q \in F$ and choose a sequence $(q_n)_{n \ge 1}$ such that $q_n \in I_{n+1}^+$, so in particular $\lim_{n \to \infty} q_n = -2/3$.    
    We apply Lemma~\ref{Okolice_Fn} to the sets $F$ and $S$ to obtain a decreasing sequence of uniformly accessible closed sets $(F_n)_{n \ge 0}$ such that $\bigcap_{n \ge 0} F_n = F$.
    We also chose a sequence of points $(p_n)_{n \ge 1}$ such that $p_n \in \mathring{F}_{n-1} \setminus F_n$ and $\omega((p_n)_{n \ge 1}) = \partial F$, that is the sequence accumulates at every point of $\partial F$.
    Finally, we define the translation $\tau(z) = z+3$ and set $N_n = n(n+1)/2 = 1 + 2 + \ldots + n$.

    Our goal will be to construct an entire function $f \in \mathcal{O}(\C)$ with the following properties:
    \begin{enumerate}[\indent (a)]
        \item $f^{N_m + k}(F) \subseteq S_{k+1}$ for all $0 \le k \le m$,
        \item we have $\lim_{n \to \infty}f^{N_n} = -2/3$ uniformly on compact subsets of $F$,
        \item $f(D) \subseteq \mathring{D}$ and $f^{N_n}(p_n) \in D$ for all $n \ge 1$,
        \item $f^n$ is injective on $F$ for all $n \ge 1$,
        \item $f^n$ maps unbounded subsets of $F$ to unbounded sets for all $n \ge 1$.
    \end{enumerate}
    Let $(f^{n_m})_{m \ge 1}$ be a sequence of iterates and for each $k \ge 1$ define the set 
    \[J_k = \set{m \ge 1 \mid f^{n_m}(F) \subseteq S_k}.\]
    Note that item (a) implies $\N = \bigcup_{k \ge 1} J_k$.
    If all the sets $J_k$ are finite, the sequence $(f^{n_m})_{m \ge 1}$ converges to infinity uniformly on compact subsets of $F$.
    Otherwise, there exists some $k \ge 1$, such that the set $J_k$ is infinite.
    It follows from item (a) that $f^{n_m - k}(F) \subseteq S_1$ so we have $n_m - k \in \set{N_n \mid n \ge 1}$ for all $m \in J_k$.
    Item (b) and the continuity of $f$ then imply that the subsequence $(f^{n_m})_{m \in J_k}$ converges to $f^k(-2/3)$ uniformly on compact subsets of $F$. 
    Considering both cases, we see that $\mathring{F}$ belongs to the Fatou set.
    On the other hand, item (c) implies that the set $D$ is contained in some attracting basin $\mathcal{A}$, and that $f^k(p_n) \in \mathcal{A}$ for all $k \ge N_n$.
    Since the points $(p_n)_{n \ge 1}$ accumulate on $\partial F$, the family of iterates $(f^n)_{n \ge 1}$ cannot be normal on any neighbourhood of a boundary point of $F$, and $\partial F$ belongs to the Julia set.
    The same argument applies to the boundary of any iterate $f^n(F)$, hence the connected components of $\mathring{F}$ are oscillating wandering domains.
    Finally, item (d) guarantees the injectivity of iterates, while item (e) ensures that unboundedness is preserved under iteration.
    Thus, the theorem holds, provided we can construct such a function.

    The function $f$ will be given as the limit of a sequence of entire functions $(f_n)_{n \ge 1}$ which will be constructed inductively using Arakelian approximation.
    At the $n$-th step of the construction, we will define an Arakelian set $\Pi_n$, an open set $V_n \subseteq \Pi_n$, and a holomorphic function $h_n \in \mathcal{O}(\Pi_n)$ as well as positive constants $\varepsilon_n > 0$ and $\lambda_n > 0$.
    The function $f_n$ will then be obtained by approximating the function $h_n$ on the set $\Pi_n$ up to an error of at most $\varepsilon_n > 0$.
    In particular, for $n \ge 1$, the following properties will hold:
    \begin{enumerate}
        \item $\varepsilon_1 < 1/4$ and $\varepsilon_{n+1} < \varepsilon_n/2$,
        \item $P_{n-1} \subseteq \Pi_n \subseteq P_n$,
        \item $\abs{f_n'}$ is uniformly bounded above and below on $V_n$ by some positive constants,
        \item $f_n^k(C_{n+1}) \subseteq S_{k+1}$ for all $0 \le k \le n$ and $f_n^{N_m + k}(F_n) \subseteq S_{k+1}$ for all $0 \le k \le m \le n$,
        \item $f_n^k(C_{n+1}) \subseteq V_n$ with $\dist(f_n^k(C_{n+1}), \partial V_n) \ge \lambda_n$ for all $0 \le k \le n-1$ and $f_n^{k}(F_n) \subseteq V_n$ with $\dist(f_n^{k}(F_n), \partial V_n) \ge \lambda_n$ for all $0 \le k \le N_n+n-1$,
        \item $f_n^{N_n}(F_n) \subseteq C_{n+1}^+$ with $\dist(f_n^{N_n}(F_n), \partial C_{n+1}^+) > 0$,
        \item the function $f_n$ is injective on $V_n$ and maps uniformly accessible subsets of $V_n$ to uniformly accessible ones,
        \item for every $k \ge n$ the set $f_n^{N_n +n}(F_k)$ is closed and uniformly accessible.
    \end{enumerate}
    Furthermore, if $g \in \mathcal{O}(\C)$ is an entire function satisfying $\norm{g-f_n}_{\Pi_n} < \varepsilon_n$, 
    it also enjoys properties (3) through (8). In particular, the bounds on $\abs{g'}$ are independent of the function $g$.

    The remainder of the proof is divided into three steps: the base case, the inductive step, and verifying that the limit of the functions $(f_n)_{n \ge 1}$ satisfies properties (a) through (e).

    \subsection*{The base case:}
    First recall that by definition $C_2 \subseteq \mathring{C}_1^-$ with $\dist(C_2, \partial \mathring{C}_1^-) > 0$ and that $F_1 \subseteq \mathring{C}_1^+$ with $\dist(F_1, \partial \mathring{C}_1^+) > 0$.
    Furthermore, the set $F_1$ is uniformly accessible and the point $p_1 \in \mathring{C}_1^+$ is disjoint from the set $F_1$.
    We apply the corollary of Proposition~\ref{Arakelian_so_gosti} to the sets $C_2$ and $\mathring{C}_1^-$ to obtain an Arakelian set $E_1^-$ such that $C_2 \subseteq E_1^- \subseteq \mathring{C}_1^-$ and $\dist(C_2, \partial E_1^-) > 0$.  
    We also apply Lemma~\ref{Strip_Arakelian} to the point $p_1$ and sets $F_1$ and $\mathring{C}_1^+$ to obtain an Arakelian set $E_1^+$ such that $F_1 \subseteq E_1^+ \subseteq \mathring{C}_1^-+$, $\dist(F_1, \partial E_1^+) > 0$, and $p_1 \in \C \setminus E_1^+$.
    Observe that the Arakelian sets $E_1^+$ and $E_1^-$ are disjoint. We then denote 
    \[\lambda_1 = \min \set{\dist(C_2, \partial E_1^-)/3,\, \dist(F_1, \partial E_1^+)/3}\]
    and define the sets 
    \[A_1 = \set{z \in E_1^- \cup E_1^+ \mid \dist(z, \partial E_1^- \cup \partial E_1^+) \ge \lambda_1} \quad \text{and} \quad V_1 = \mathring{A}_1.\]
    Note $\dist(A_1, \partial E_1^- \cup \partial E_1^+) = \lambda_1$ and $\dist(F_1 \cup C_2, \partial A_1) \ge 2\lambda_1$.
    Since $p_1 \in \mathring{C}_1^+$, we can choose a closed disc $B_1 \subseteq \mathring{C}_1^+ \setminus E_1^+$ centered in $p_1$.
    Finally, we define the set 
    \[\Pi_1 = D \cup E_1^- \cup E_1^+ \cup B_1.\]
    Since $D$, $E_1^-$, $E_1^+$, and $B_1$ are pairwise disjoint Arakelian sets, their union $\Pi_1$ is also Arakelian.

    We choose a linear map $L_1$, such that $L_1(E_1^+) \subseteq C_2^+$ with $\dist(L_1(E_1^+), \partial C_2^+) > 0$, $L_1(q) = q_1$, and that $\abs{L_1'} < 1/2$. 
    We then define the function $h_1 \colon \Pi_1 \to \C$ by setting
    \[h_1(z) = \begin{cases} -3;& z \in D \\ z+3;& z \in E_1^- \\ L_1(z);& z \in E_1^+ \\ -3;& z \in B_1 \end{cases},\]
    see Figure~\ref{StripOscWD}.
    Note, that the function $h_1$ extends holomorphically to an open neighbourhood of $\Pi_1$, that is $h_1 \in \mathcal{O}(\Pi_1)$.

    \begin{figure}[ht]
        \centering
        \includegraphics[width=0.85\textwidth]{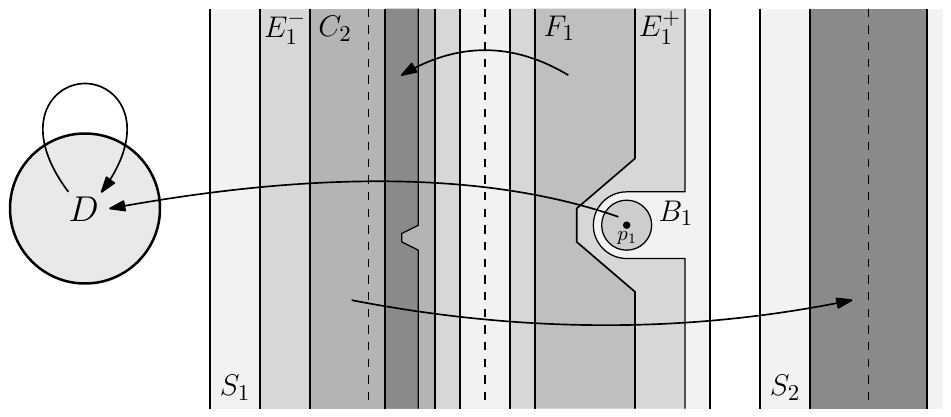}
        \caption{The construction of the map $h_1$ in the proof of Theorem~\ref{stripWDoscillating}.}
        \label{StripOscWD} 
    \end{figure}

    We now chose $0 < \varepsilon_1 < 1/4$ small enough, so that if an entire function $g \in \mathcal{O}(\C)$ satisfies $\norm{g- h_1}_{\Pi_1} < 2\varepsilon_1$ the following holds:
    \begin{enumerate}[\indent (i)]
        \item $\abs{g'}$ is uniformly bounded above and below on $V_1$ by some positive constants,
        \item $g(D) \subseteq \mathring{D}$ and $g(p_1) \in D$,
        \item $g(q) \in D(q_1, 1/2)$ and $\abs{g'} < 1/2$ of $F_1$.
        \item $g(C_2) \subseteq S_2$, $g(F_1) \subseteq S_1$, and $g^2(F_1) \subseteq S_2$,
        \item $g(F_1) \subseteq V_1$ with $\dist(g(F_1), \partial V_1) \ge \lambda_1$,
        \item $g(F_1) \subseteq C_2^+$ with $\dist(g(F_1), \partial C_2^+) > 0$,
        \item the function $g$ is injective on $V_1$ and maps uniformly accessible subsets of $V_1$ to uniformly accessible ones,
        \item for every $k \ge 1$ the set $g^2(F_k)$ is closed and uniformly accessible.
    \end{enumerate} 

    Since $\dist(V_1, \partial E_1^- \cup \partial E_1^+) = \lambda_1$ and $\abs{h_1'}$ is a positive constant on $E_1^-$ and on $E_1^+$, 
    item (i) can be guaranteed by using Cauchy estimates. In particular, since $\abs{h_1'} < 1/2$ we can also guarantee $\abs{g'} < 1/2$ on $E_1 \cap V_1$.
    Since we chose $\varepsilon_1 < 1/4$, items (ii) and (iii) immediately follow.
    
    Next we apply Lemma~\ref{unbounded_injective_uniff_access} twice; 
    once to the open set $\mathring{E}_1^-$, the translation $\tau$, and the constant $\lambda_1$,
    and once the the open set $\mathring{E}_1^+$, the linear map $L_1$, and the constant $\lambda_1$.
    Note that this is possible, since the sets $E_1^-$ and $E_1^+$ are Arakelian, so the sets $\mathring{E}_1^-$ and $\mathring{E}_1^+$ are simply connected. 
    We take $\varepsilon_1$ small enough so that the conclusions of both lemmas hold.
    It then immediately follows that a function $g$ satisfying $\norm{g- h_1}_{\Pi_1} < 2\varepsilon_1$ will be injective on $V_1$ and will map uniformly accessible subsets of $V_1$ to uniformly accessible ones, thus showing item (vii).
    It also follows that $g(C_2) \subseteq S_2$ and $g(F_1) \subseteq C_2^+$ with $\dist(g(F_1), \partial C_2^+) > 0$, so item (vi) holds.
    Since $g(F_1) \subseteq C_2^+ \subseteq S_1 \cap V_1$, $\dist(C_2^+, \partial V_1) \ge 2\lambda_1$, and $g^2(F_1) \subseteq g(C_2) \subseteq S_2$, items (iv) and (v) hold.
    Since $F_k \cup g(F_k) \subseteq V_1$ for all $k \ge 1$, item (viii) is also immediate.
    Note, that the bounds on $\abs{g'}$ do not depend on the function $g$.
    
    Finally, we use the Arakelian approximation theorem to obtain an entire function $f_1 \in \mathcal{O}(\C)$ such that $\norm{f_1 - h_1}_{\Pi_1} < \varepsilon_1$.
    With that, items (1) through (8) are satisfied for the case $n=1$.

    \subsection*{The inductive step:} Suppose, we have already completed the $n$-th step of the inductive proces.
    Item (4) of the inductive hypothesis implies $f^n_n(C_{n+1}) \subseteq S_{n+1}$,
    while items (5) and (7) imply that the function $f_n^n$ is injective on a neighbourhood of $C_{n+1}$.
    Since the sets $\mathring{C}_{n+1}^-$ and $\mathring{C}_{n+1}^+$ are disjoint and simply connected, 
    it follows that $f_n^n(\mathring{C}_{n+1}^-)$ and $f_n^n(\mathring{C}_{n+1}^+)$ are disjoint simply connected open subsets of $S_{n+1}$.
    Recall that $C_{n+2} \subseteq \mathring{C}_{n+1}^-$ with $\dist(C_{n+2}, \partial \mathring{C}_{n+1}^-) > 0$ by definition,
    while $f_n^{N_n}(F_n) \subseteq \mathring{C}_{n+1}^+$ with $\dist(f_n^{N_n}(F_n), \mathring{C}_{n+1}^+) > 0$ by item (6) of the induction hypothesis.
    Since $\abs{(f_n^n)'}$ is uniformly bounded from below on $C_{n+1}$ by items (3) and (5) of the induction hypothesis,
    an application of Koebe's 1/4-theorem yields that $f_n^n(C_{n+2}) \subseteq f_n^n(\mathring{C}_{n+1}^-)$ with $\dist(f_n^n(C_{n+2}), f_n^n(\mathring{C}_{n+1}^-)) > 0$ and $f_n^{N_n+n}(F_n) \subseteq f_n^n(\mathring{C}_{n+1}^+)$ with $\dist(f_n^{N_n+n}(F_n), f_n^n(\mathring{C}_{n+1}^+)) > 0$.
    Items (5) and (7) of the induction hypothesis also imply that the function $f_n^{N_n+n}$ is injective on $F_n$, hence $f_n^{N_n+n}(p_{n+1})$ and $f_n^{N_n+n}(F_{n+1})$ are disjoint subsets of $f_n^{N_n+n}(F_n)$.
    Finally, item (8) of the induction hypothesis implies that the set $f_n^{N_n+n}(F_{n+1})$ is uniformly accessible.

    We apply the corollary of Proposition~\ref{Arakelian_so_gosti} to the sets $f_n^n(C_{n+2})$ and $f_n^n(\mathring{C}_{n+1}^-)$ to obtain an Arakelian set $E_{n+1}^-$ such that $f_n^n(C_{n+2}) \subseteq E_{n+1}^- \subseteq f_n^n(\mathring{C}_{n+1}^-)$ and $\dist(f_n^n(C_{n+2}), \partial E_{n+1}^-) > 0$.
    We also apply Lemma~\ref{Strip_Arakelian} to the point $f_n^{N_n+n}(p_{n+1})$ and sets $f_n^{N_n+n}(F_{n+1})$ and $f_n^n(\mathring{C}_{n+1}^+)$ to obtain an Arakelian set $E_{n+1}^+$ such that $f_n^{N_n+n}(F_{n+1}) \subseteq E_{n+1}^+ \subseteq f_n^n(\mathring{C}_{n+1}^+)$, $\dist(f_n^{N_n+n}(p_{n+1}), \partial E_{n+1}^+) > 0$ and $f_n^{N_n+n}(p_{n+1}) \in \C \setminus \partial E_{n+1}^+$. 
    Observe that the Arakelian sets $E_{n+1}^-$ and $E_{n+1}^+$ are disjoint. We then denote 
    \[\lambda_{n+1} = \min \set{\dist(f_n^n(C_{n+2}), \partial E_{n+1}^-)/3, \, \dist(f_n^{N_n+n}(p_{n+1}), \partial E_{n+1}^+)/3, \, \lambda_n}\] 
    and define the sets 
    \[A_{n+1} = \set{z \in E_{n+1}^- \cup E_{n+1}^+ \mid \dist(z, \partial E_{n+1}^- \cup \partial E_{n+1}^+) \ge \lambda_{n+1}} \quad \text{and} \quad V_{n+1} = V_n \cup \mathring{A}_{n+1}.\]
    Note $\dist(A_{n+1}, \partial E_{n+1}^- \cup \partial E_{n+1}^+) = \lambda_{n+1}$ and $\dist(f_n^n(C_{n+2}) \cup f_n^{N_n+n}(F_{n+1}), \partial A_{n+1}) \ge 2\lambda_{n+1}$.
    Since $f_n^{N_n+n}(p_{n+1}) \in f_n^n(\mathring{C}_{n+1}^+)$, we can choose a closed disc $B_{n+1} \subseteq f_n^n(\mathring{C}_{n+1}^+) \setminus E_{n+1}^+$ centered in $f^n_n(p_{n+1})$.
    Finally, we define the set 
    \[\Pi_{n+1} = P_n \cup E_{n+1}^- \cup E_{n+1}^+ \cup B_{n+1}.\]
    Since $P_n$, $E_{n+1}^-$, $E_{n+1}^+$, and $B_{n+1}$ are pairwise disjoint Arakelian sets, their union $\Pi_{n+1}$ is also Arakelian.

    By item (3) of the induction hypothesis, $|f_n'|$ is bounded above on $V_n$ by some constant $M$.
    We choose a linear map $L_{n+1}$ such that $L_{n+1}(E_{n+1}^+) \subseteq C_{n+2}^+$ with $\dist(L_{n+1}(E_{n+1}^+), \partial C_{n+2}^+) > 0$, ${L_{n+1}(f_n^{N_n + n}(q)) = q_{n+1}}$ and that $\abs{L_{n+1}'} < M^{-N_n - n}/2^{n+1}$.
    This is possible, since $E_{n+1}^+ \subseteq S_{n+1}$.
    We then define the function $h_{n+1} \colon \Pi_{n+1} \to \C$ by setting
    \[h_{n+1}(z) = \begin{cases} f_n;& z \in P_n \\ z+3;& z \in E_{n+1}^- \\ L_{n+1}(z);& z \in E_{n+1}^+ \\ -3;& z \in B_{n+1} \end{cases}.\]
    Note, that the function $h_{n+1}$ extends holomorphically to an open neighbourhood of $\Pi_{n+1}$, that is $h_{n+1} \in \mathcal{O}(\Pi_{n+1})$.
    
    We now chose $0 < \varepsilon_{n+1} < \varepsilon_n/2$ small enough, so that if an entire function $g \in \mathcal{O}(\C)$ satisfies $\norm{g- h_{n+1}}_{\Pi_{n+1}} < 2\varepsilon_{n+1}$, the following holds:

    \begin{enumerate}[\indent (i)]
        \item $\abs{g'}$ is uniformly bounded above and below on $V_{n+1}$ by some positive constants,
        \item $g^{N_{n+1}}(p_{n+1}) \in D$,
        \item $g^{N_{n+1}}(q) \in D(q_{n+1}, 1/2^{n+1})$ and $\abs{\left(g^{N_{n+1}}\right)'} < 1/2^{n+1}$ on $F_{n+1}$,
        \item $g^k(C_{n+2}) \subseteq S_{k+1}$ for all $0 \le k \le n+1$ and $g^{N_m + k}(F_{n+1}) \subseteq S_{k+1}$ for all $0 \le k \le m \le n+1$,
        \item $g^k(C_{n+2}) \subseteq V_{n+1}$ with $\dist(g^k(C_{n+2}), \partial V_{n+1}) \ge \lambda_{n+1}$ for all $0 \le k \le n$ and $g^{k}(F_{n+1}) \subseteq V_{n+1}$ with $\dist(g^{k}(F_{n+1}), \partial V_{n+1}) \ge \lambda_{n+1}$ for all $0 \le k \le N_{n+1}+n$,
        \item $g^{N_{n+1}}(F_{n+1}) \subseteq C_{n+2}^+$ with $\dist(g^{N_{n+1}}(F_{n+1}), \partial C_{n+2}^+) > 0$,
        \item the function $g$ is injective on $V_{n+1}$ and maps uniformly accessible subsets of $V_{n+1}$ to uniformly accessible ones,
        \item for every $k \ge n+1$ the set $g^{N_{n+1} + n + 1}(F_k)$ is closed and uniformly accessible.
    \end{enumerate} 
    Since $\Pi_n \subseteq P_n$ and $\varepsilon_{n+1} < \varepsilon_n/2$, the function $g$ will satisfy $\norm{g- f_n}_{\Pi_{n}} < \varepsilon_{n}$,
    so by the induction hypothesis, it will enjoy properties (3) through (8).

    For item (i), notice that item (3) of the induction hypothesis implies that $\abs{g'}$ is uniformly bounded above and below on $V_{n}$,
    while the uniform bound on $\mathring{A}_{n+1}$ can be guaranteed by using Cauchy estimates, since $\dist(A_{n+1}, \partial E_{n+1}) = \lambda_{n+1}$ and $\abs{h_{n+1}'}$ is a positive constant on $E_{n+1}^-$ and on $E_{n+1}^+$.
    In particular, since $\abs{h_{n+1}'} < 2^{-(n+1)}M^{-(N_n + n)}$, we can also guarantee $\abs{g'} < M^{-N_n - n}/2^{n+1}$ on $E_{n+1}^+ \cap \mathring{A}_{n+1}$.

    Notice that $\abs{h_{n+1}'}$ is uniformly bounded above on $V_n$ by item (3) of the induction hypothesis and on $\mathring{A}_{n+1}$ by construction.
    Furthermore, since $C_{n+2} \subseteq C_{n+1}$, $\set{p_{n+1}} \cup F_{n+1} \subseteq F_n$, and $\lambda_{n+1} \le \lambda_n$, we have
    \begin{align*}
        \widehat{C}_{n+2} &= \bigcup_{k=0}^{n-1}h^k_{n+1}(C_{n+2}) \subseteq V_{n+1} \subseteq \mathring{\Pi}_{n+1}\\
        \widehat{F}_{n+1} &= \bigcup_{k=0}^{N_{n} + n}h^k_{n+1}(F_{n+1}) \subseteq V_{n+1} \subseteq \mathring{\Pi}_{n+1}\\
        \widehat{p}_{n+1} &= \bigcup_{k=0}^{N_{n} + n-1}h^k_{n+1}(p_{n+1}) \subseteq V_{n+1} \subseteq \mathring{\Pi}_{n+1}\\
    \end{align*}
    with $\dist(\widehat{F}_{n+1} \cup \widehat{p}_{n+1} \cup \widehat{C}_{n+1}, V_{n+1}) \ge \lambda_{n+1}$ by item (5) of the induction hypothesis.
    Thus, we may apply Lemma~\ref{approxIter} to each of the above sets separately and, by choosing $\varepsilon_{n+1}$ sufficiently small, we can guarantee that
    \begin{alignat*}{2}
        \norm{g^n - f_n^n}_{C_{n+2}} &< \lambda_{n+1} &\qquad \norm{g^{N_n + n} - f_n^{N_n + n}}_{F_{n+1}} &< \lambda_{n+1} \\
        g^{N_n + n}(p_{n+1}) &\in B_{n+1} &\qquad \abs{g^{N_{n+1}}(q) - h_{n+1}^{N_{n+1}}(q)} &< \frac{1}{2^{n+1}}.
    \end{alignat*}
    The top two estimates, together with $\dist(f^n_n(C_{n+2}) \cup f_n^{N_n + n}(F_{n+1}), \partial A_{n+1}) \ge 2\lambda_{n+1}$ 
    imply $g^n(C_{n+2}) \subseteq E_{n+1}^- \cap \mathring{A}_{n+1}$ with $\dist(g^n(C_{n+2}), \partial A_{n+1}) \ge \lambda_{n+1}$ and $g^{N_n + n}(F_{n+1}) \subseteq E_{n+1}^+ \cap \mathring{A}_{n+1}$ with $\dist(g^{N_n + n}(F_{n+1}), \partial A_{n+1}) \ge \lambda_{n+1}$ respectively.
    Since $h_{n+1} = -3$ on $B_{n+1}$ and $\varepsilon_{n+1} < 1/4$ by item (1) of the induction hypothesis, the bottom left estimate implies $g^{N_{n+1}}(p_{n+1}) \in \D$, showing item (ii).
    Since $h_{n+1}^{N_{n+1}}(q) = L_{n+1}(f_n^{N_n+n}(q)) = q_{n+1}$, the bottom right estimate implies $g^{N_{n+1}}(q) \in D(q_{n+1}, 1/2^{n+1})$.
    Lastly, by item (3) of the induction hypothesis, $|g'|$ is also bounded above on $V_n$ by $M$. 
    Since $g^{N_n + n}(F_{n+1}) \subseteq E_{n+1}^+ \cap \mathring{A}_{n+1}$, we then have the estimate 
    \[\abs{\left(g^{N_{n+1}}\right)'(z)} = \abs{g'\left(g^{N_n+n}(z)\right)}\abs{\left(g^{N_n+n}\right)'(z)} < \frac{M^{-N_n - n}}{2^{n+1}} M^{N_n+n} = \frac{1}{2^{n+1}}\]
    for all $z \in F_{n+1}$, thus showing item (iii).

    Next, we apply Lemma~\ref{unbounded_injective_uniff_access} twice; 
    once to the open set $\mathring{E}_{n+1}^-$, the translation $\tau$ and the constant $\lambda_{n+1}$,
    and once the the open set $\mathring{E}_{n+1}^+$, the linear map $L_{n+1}$ and the constant $\lambda_{n+1}$. 
    Note that this is possible, since the sets $E_{n+1}^+$ and $E_{n+1}^-$ are Arakelian, so the sets $\mathring{E}_{n+1}^+$ and $\mathring{E}_{n+1}^-$ are simply connected.
    We take $\varepsilon_{n+1}$ small enough so that the conclusions of both lemmas hold.
    It then immediately follows that a function $g$ satisfying $\norm{g- h_{n+1}}_{\Pi_{n+1}} < 2\varepsilon_{n+1}$ will be injective on $\mathring{A}_{n+1}$ and will map uniformly accessible subsets of $\mathring{A}_{n+1}$ to uniformly accessible ones.
    By item (7) of the induction hypothesis, the same properties hold on $V_n$, so item (vii) holds.
    It also follows that $g^{n+1}(C_{n+2}) \subseteq S_{n+2}$ and $g^{N_{n+1}}(F_{n+1}) \subseteq C_{n+2}^+$ with $\dist(g^{N_{n+1}}(F_{n+1}), \partial C_{n+2}^+) > 0$, so item (vi) holds.
    By item (4) of the induction hypothesis. we then have $g^k(C_{n+2}) \subseteq S_{k+1}$ for all $0 \le k \le n+1$, 
    and since $g^{N_{n+1}}(F_{n+1}) \subseteq C_{n+2}$, we also have $g^{N_m + k}(F_{n+1}) \subseteq S_{k+1}$ for all $0 \le k \le m \le n+1$, so item (iv) holds.
    Similarly, by item (5) and the estimates following from Lemma~\ref{approxIter} we have $g^k(C_{n+2}) \subseteq V_{n+1}$ with $\dist(g^k(C_{n+2}), \partial V_{n+1}) \ge \lambda_{n+1}$ for all $0 \le k \le n$,
    and since $g^{N_{n+1}}(F_{n+1}) \subseteq C_{n+2}$, we also have $g^{k}(F_{n+1}) \subseteq V_{n+1}$ with $\dist(g^{k}(F_{n+1}), \partial V_{n+1}) \ge \lambda_{n+1}$ for all $0 \le k \le N_{n+1}+n$, so item (v) also holds.
    Since $g^{N_n +n}(F_k)$ is closed and uniformly accessible by item (8) of the induction hypothesis, and remains in $V_{n+1}$ under $n+1$ iteration of $g$ for all $k \ge n+1$, item (viii) is also immediate.
    Note, that the bounds on $\abs{g'}$ do not depend on the function $g$.
    
    Finally, we use the Arakelian approximation theorem to obtain an entire function $f_{n+1} \in \mathcal{O}(\C)$ such that $\norm{f_{n+1} - h_{n+1}}_{\Pi_{n+1}} < \varepsilon_{n+1}$.
    From the above construction, it is clear that items (1) through (8) are satisfied for the case $n + 1$.  
    
    \subsection*{Verification:} Condition (1) implies that for every $n \ge 1$ the functions $(f_k)_{k \ge n}$ form a Cauchy sequence on $\Pi_n$, and by condition (2) we have $\bigcup_{n=1}\Pi_n = \C$.
    Thus, the sequence $(f_n)_{n \ge 1}$ converges uniformly on compact sets to some entire function $f \in \mathcal{O}(\C)$.
    Given $n \ge 1$ we then have $f = f_n + \sum_{k=n}^\infty f_{k+1}-f_k$, which yields the estimate 
    \[\norm{f - f_n}_{\Pi_n} \le \sum_{k=n}^\infty \norm{f_{k+1}-f_k}_{\Pi_n} < \sum_{k=n}^\infty \varepsilon_{k+1} < \varepsilon_n \sum_{k=n}^\infty 2^{k+1-n} < \varepsilon_n,\]
    so by the induction hypothesis, the function $f$ enjoys the same properties as $f_n$.
    In particular, we have $f^{N_m + k}(F) \subseteq S_{k+1}$ for all $0 \le k \le m$, $f(D) \subseteq \mathring{D}$, and $f^{N_n}(p_n) \in D$ for all $n \ge 1$, thus proving items (a) and (c).
    Since $f^{N_n}(q) \in D(q_n, 1/2^n)$ and $\lim_{n \to \infty} q_n = -2/3$, we have $\lim_{n \to \infty} f^{N_n}(q) = -2/3$,
    and since $\abs{(f^{N_n})'} < 1/2^n$, the estimate $\abs{f^{N_n}(p) - f^{N_n}(q)} \le \abs{p-q}/2^n$ implies that $\lim_{n \to \infty}f^{N_n} = -2/3$ uniformly on compact subsets of $F$, proving item (b).
    Since $f^k(F) \subseteq V_n$ for all $0 \le k \le N_n + n - 1$, the function $f^{N_n + n}$ is injective on $F$ for all $n \ge 1$, so in particular item (d) holds.
    Finally, since $f^{N_n + n}$ is injective on a neighbourhood of $F$ and $\abs{(f^{N_n + n})'}$ is uniformly bounded from below on this neighbourhood, 
    the inverse function $(f^{N_n + n})^{-1}$ is Lipschitz on $f^{N_n + n}(F)$. Thus $f^{N_n + n}$ maps unbounded subsets of $F$ to unbounded ones for all $n \ge 1$, which implies item (e).
\end{proof}

\section{Lifting wandering domains} \label{sec4}

In this section, we construct wandering domains by lifting bounded wandering domains with prescribed geometry via the exponential map. 
To this end, we will need a modification of the classical Runge approximation theorem that allows us to approximate a nonvanishing holomorphic function 
defined on a neighbourhood of a compact set by a nonvanishing entire function.

\begin{lemma}\label{Runge_omitted_value}
    Let $K$ be a compact set without holes, $h \in \mathcal{O}(K)$ a nonvanishing holomorphic function, and $\varepsilon > 0$.
    Then there exists a nonvanishing entire function $f \in \mathcal{O}(\C)$ such that $\norm{f-h}_K < \varepsilon$.
\end{lemma}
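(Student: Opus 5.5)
The natural approach is to take a holomorphic logarithm of $h$, approximate it by an entire function using Runge, and exponentiate. The only obstruction is that $h$ need not have a global logarithm on a neighbourhood of $K$. For example, $h(z)=z$ on an annulus has no logarithm, but an annulus has a hole, so the hypothesis that $K$ has no holes should be exactly what removes this obstruction.

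\emph{Step 1: neighbourhood.} Since $K$ has no holes, $\C\setminus K$ is connected. We choose an open neighbourhood $W$ of $K$ on which $h$ is holomorphic and nonvanishing. We then shrink $W$ to a simply connected open set $W'$ with $K\subseteq W'\subseteq W$. Theorem~\ref{characterization} gives this directly, since a compact set without holes is Arakelian; alternatively one can cover $K$ by finitely many small discs inside $W$ and fill in the holes of their union. The one thing to check is that the filled set still lies in $W$. This is where we must be careful: the filled holes need not lie in $W$. That is why we use the characterization, which guarantees $W'\subseteq W$.

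\emph{Step 2: logarithm.} On each connected component of the simply connected open set $W'$, the function $h$ is nonvanishing and holomorphic. It therefore has a holomorphic logarithm $g$, with $e^{g}=h$ on $W'$, chosen independently on each component. Thus $g\in\mathcal{O}(K)$.

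\emph{Step 3: approximation.} Since $K$ is compact without holes, Runge's theorem (or Theorem~\ref{classical}) gives an entire function $G$ with $\norm{G-g}_K<\eta$. We set $f=e^{G}$, which is entire and nonvanishing. On $K$ we have
\[
\abs{f-h}=\abs{h}\,\abs{e^{G-g}-1}\le \norm{h}_K\,(e^{\eta}-1),
\]
and this is less than $\varepsilon$ once $\eta$ is chosen small enough, because $\norm{h}_K<\infty$ by compactness.

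The main obstacle is Step 1, namely producing a simply connected neighbourhood inside the domain of nonvanishing of $h$. Once that is available, Steps 2 and 3 are routine.
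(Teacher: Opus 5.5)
Your proposal is correct and follows essentially the same route as the paper: obtain a simply connected neighbourhood of $K$ from Theorem~\ref{characterization}, take a holomorphic logarithm of $h$ there, approximate the logarithm by Runge's theorem, and exponentiate, using the estimate $\abs{f-h}\le\norm{h}_K\abs{e^{G-g}-1}$ on $K$. Your explicit choice of a neighbourhood on which $h$ is nonvanishing, and not only on $K$, is a small point that the paper leaves implicit.
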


\begin{proof}
    Let $U \subseteq \C$ be a simply connected open neighbourhood of $K$ on which $h$ is holomorphic.
    Such a neighbourhood exists, since the set $K$ is Arakelian, see Theorem~\ref{characterization}.
    Since the function $h$ is nonvanishing and $U$ is simply connected, there exists a holomorphic function $\widetilde{h} \in \mathcal{O}(U)$ such that $h = e^{\widetilde{h}}$.
    Denote $M = \max_{z \in K}\abs{h(z)} > 0$. By the continuity of the exponential map, there exists $\delta > 0$ such that $\abs{z} < \delta$ implies $\abs{e^z-1} < \frac{\varepsilon}{M}$.
    We now use the classical Runge approximation theorem to obtain an entire function $\widetilde{f} \in \mathcal{O}(\C)$ such that $\norm{\widetilde{f}-\widetilde{h}}_K < \delta$ and define $f = e^{\widetilde{f}}$.
    Clearly $f$ is a nonvanishing entire function and a quick computation shows
    \[\abs{f(z)-h(z)} = \abs{e^{\widetilde{f}(z)} - e^{\widetilde{h}(z)}} = \abs{e^{\widetilde{h}(z)}}\abs{e^{\widetilde{f}(z)-\widetilde{h}(z)}-1} < M \frac{\varepsilon}{M} = \varepsilon\]
    for $z \in K$, taking into account that $\abs{\widetilde{f}(z)-\widetilde{h}(z)} < \delta$. This concludes the proof.
\end{proof}

We are now ready to prove Theorem~\ref{liftWD}.

\begin{proof}[Proof of Theorem~\ref{liftWD}]
    Since $\Omega_0$ is a bounded, connected, regular open set whose closure has connected complement, we know that the set can be realized as the wandering domain of some entire function $g \in \mathcal{O}(\C)$ whose iterates are univalent on the wandering domain.
    The key idea is to construct the function $g$ so that it is nonvanishing.
    To this end, we follow the proofs of \cite[Theorems 4 and 5]{GeometryOfWD} or Theorems~\ref{stripWDescaping} and~\ref{stripWDoscillating},
    but use Lemma~\ref{Runge_omitted_value} instead of the classical Runge or Arakelian approximation theorems. 
    At each step of the inductive construction, we obtain a nonvanishing entire function $g_{n+1}$ by approximating the function $g_n$ from the previous step, which is nonvanishing by the induction hypothesis, and a linear map that can always be chosen to map away from the origin. 
    We thus obtain a sequence of nonvanishing entire functions $(g_{n})_{n \ge 1}$ and define $g$ as the uniform limit of this sequence.
    Since the functions $g_n$ are all nonvanishing and the function $g$ is non-constant by construction, it follows from Hurwitz's theorem that the function $g$ is nonvanishing as well.
    We can also modify the construction of $g$ to make sure that the forward orbit of $g(\Omega_0)$ lies entirely in the right half-plane. 

    Next consider the restriction $g \vert_{\C \setminus \set{0}}$.
    There exists an entire function $f \in \mathcal{O}(\C)$ such that $g \circ \exp = \exp \circ f$, and it follows by induction that $g^n \circ \exp = \exp \circ f^n$ for all $n \ge 1$.   
    A result of Bergweiler, see~\cite{BergweilerExpLift}, guarantees that the preimage of a Fatou component of $g$ under the exponential map belongs to the Fatou set of $f$.
    In particular, the connected components of the preimage are Fatou components of the function $f$.
    Now consider the set $\Omega = \exp^{-1}(\Omega_0)$ and note it is connected. 
    By the previous observation, it is a Fatou component of the function $f$, and we claim it is also a wandering domain. 
    Indeed, suppose that $f^n(\Omega) = f^m(\Omega)$ for some $n,m \ge 1$. Applying the exponential map to both sides and using the relation $g^n \circ \exp = \exp \circ f^n$, we obtain $g^n(\Omega_0) = g^m(\Omega_0)$, and since $\Omega_0$ is a wandering domain of $g$, this implies $n=m$.
    Furthermore, since the orbit of $g(\Omega_0)$ lies in the right half-plane, where the appropriate branch of $\log$ is well defined, the wandering domain $\Omega$ is escaping if and only if $\Omega_0$ is escaping, and oscillating if and only if $\Omega_0$ is oscillating.

    It is obvious that the set $\Omega$ is unbounded and contains a half-plane. 
    To see that the sets $f^n(\Omega)$ for $n \ge 1$ are bounded, note that $\exp(f^n(\Omega)) = g^n(\Omega_0)$, so $f^n(\Omega)$ is a connected component of the set $\exp^{-1}(g^n(\Omega_0))$. 
    But since $g^n(\Omega_0)$ is a bounded set in the right half-plane, all the connected components of $\exp^{-1}(g^n(\Omega_0))$ are bounded.
    Finally, note that the exponential map is $\infty$-to-$1$ on $\Omega$ and $1$-to-$1$ on $f^n(\Omega)$ for $n \ge 1$.
    Since $f \vert_\Omega = \log \circ g \circ \exp$ for the appropriate choice of logarithm, the function $f \vert_\Omega$ is $\infty$-to-$1$ while the function $f \vert_{f^n(\Omega)}$ is injective for all $n \ge 1$.     
\end{proof}

\begin{remark}
    Since $g \circ \exp = \exp \circ f$, there exists some $k \in \Z$ such that $f(z + 2\pi i) = f(z) + 2\pi i k$, see \cite[Lemma 2]{BergweilerExpLift}.
    If we pick $z_0 \in \Omega$, we have $z_0 + 2\pi i \in \Omega$ and the invariance property of Fatou components implies $f(z_0), f(z_0 + 2\pi i) \in f(\Omega)$.
    By the above construction the set $f(\Omega)$ has horizontal width at most $\pi$, so $k = 0$ and the function $f$ is $2\pi i$-periodic.
    In particular, all preimages of $\exp^{-1}(g^n(\Omega_0))$ are mapped to the same Fatou component $f^{n+1}(\Omega)$, see Figure~\ref{dynamics_lift}. 
\end{remark}

\begin{figure}[ht]
    \centering
    \includegraphics[width=0.65\textwidth]{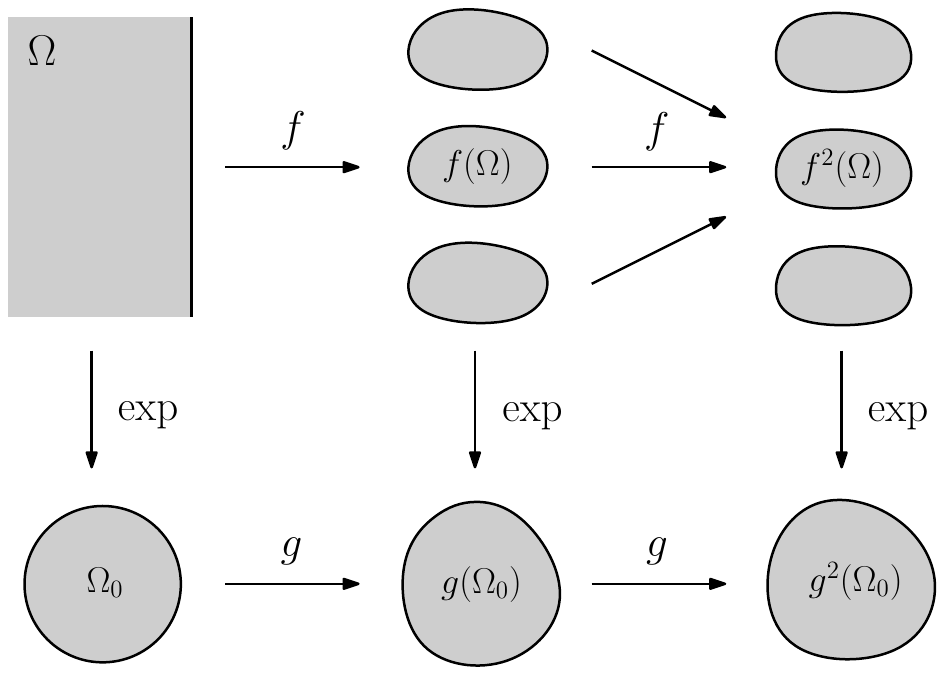}
    \caption{The dynamics of the maps $g$ and $f$ in the case of an escaping wandering domain. Note the the function $f$ is $2\pi i$-periodic.}
    \label{dynamics_lift} 
\end{figure}

\section{Wandering domains with complement of small area} \label{sec5}

In this short section, we give a direct proof of Theorem~\ref{hornWD}. 
As mentioned in the introduction, this result is an immediate consequence of Theorems~\ref{approxWD_escaping} and~\ref{approxWD_oscillating},
however proving it directly will demonstrate the main idea behind the proofs of those theorems. 

\begin{proof}[Proof of Theorem~\ref{hornWD}]
    The idea is to construct an Arakelian set $\Pi$ and a holomorphic function $h \in \mathcal{O}(\Pi)$ that will model the desired dynamical behavior.
    The function $f$ is then obtained by applying Proposition~\ref{PosplositevArakelian} to the function $h$ and a suitable locally constant error function $\varepsilon$.

    For $k \ge 1$ we define the function $g_k(x) = \frac{\delta}{2}x^{-(k+1)}$ and denote $r_k = \min\set{\frac{1}{4}, \frac{1}{2}\, g_k\left(k+\frac{1}{2}\right)}$.
    This allows us to construct the following sets:
    \begin{align*}
        E &= (-\infty, 1] \times \R \cup \set{(x,y) \in [1, \infty) \times \R \mid g_1(x) \le \abs{y}} \\
        \gamma_k &= \set{k} \times [-g_k(k),g_k(k)] \cup \set{(x,y) \in [k, \infty) \times \R \mid g_k(x) = \abs{y}} \text{ for } k \ge 2 \\
        D_k &= D\left(k + 1/2, r_k\right) \text{ for } k \ge 1.
    \end{align*}
    We then define the set
    \[\Pi = E \cup \bigcup_{k=2}^\infty \gamma_k \cup \bigcup_{k=1}^\infty D_k,\]
    see Figure~\ref{fig_rog}.
    Note that since the sets $E$, $(\gamma_k)_{k \ge 2}$, and $(D_k)_{k \ge 1}$ are pairwise disjoint, locally finite, and Arakelian, it follows from Proposition~\ref{locallyfiniteArakelian} that the set $\Pi$ is also Arakelian.
    Next we define the locally constant functions $h \in \mathcal{O}(\Pi)$ and $\varepsilon \colon \Pi \to (0,\infty)$ by 
    \[h(z) = \begin{cases} 
        \frac{5}{2}; & z \in E \\
        \frac{3}{2}; & z \in \gamma_k \text{ for } k \ge 2 \\ 
        \frac{3}{2}; & z \in D_1\\ 
        k + \frac{3}{2}; & z \in D_k \text{ for } k \ge 2 
    \end{cases} 
    \quad \text{and} \quad 
    \varepsilon(z) = \begin{cases}
        \frac{r_2}{2}; & z \in E \\
        \frac{r_1}{2}; & z \in \gamma_k \text{ for } k \ge 2 \\ 
        \frac{r_1}{2}; & z \in D_1\\ 
        \frac{r_{k+1}}{2}; & z \in D_k \text{ for } k \ge 2 
    \end{cases}.\]
    Applying Proposition~\ref{PosplositevArakelian} we obtain an entire function $f \in \mathcal{O}(\C)$ such that 
    $\abs{f(z)-h(z)} < \varepsilon(z)$ for all $z \in \Pi$.
    In particular, this implies
    $f(D_1) \subseteq \mathring{D}_1$ and $f(E) \subseteq \mathring{D}_2$ as well as $f(\gamma_k) \subseteq \mathring{D}_1$ and $f(D_k) \subseteq \mathring{D}_{k+1}$ for all $k \ge 2$.

    Observe that the function $f$ acts as a contraction on the set $D_1$, hence $D_1$ is contained in some attracting basin $\mathcal{A}$,
    and since $f(\gamma_k) \subseteq D_1$ for all $k \ge 2$, we have $\bigcup_{k \ge 2} \gamma_k \subseteq f^{-1}(\mathcal{A})$.
    On the other hand, since $f^n(E) \subseteq D_{n+1}$ for all $n \ge 1$, the points in $E$ escape uniformly to infinity under iteration, 
    hence $\mathring{E}$ is contained in some other Fatou component $\Omega_1$. 
    By the same argument, the sets $\mathring{D}_k$, with $k \ge 2$, are contained in their respective Fatou components $\Omega_k$.
    In particular, since $f^n(E) \subseteq \mathring{D}_{n+1}$, we have $f^n(\Omega_1) \subseteq \Omega_{n+1}$ for all $n \ge 1$.

    We claim $\Omega_n \cap \Omega_m = \emptyset$ for all $n \neq m$. 
    If not, there exists a pair of integers, say $n > m$, such that the set $\Omega_n \cup \Omega_m$ is connected. 
    Let $\eta \subseteq \Omega_n \cup \Omega_m$ be a curve connecting a point in $D_n$ to a point in $D_m$ if $m \ge 2$ or $E$ if $m = 1$.
    By construction, the curve $\eta$ must intersect the set $\gamma_n$ and thus contains a point that gets sent to the attracting basin $\mathcal{A}$.
    This, however, contradicts the fact that all points in $\Omega_n \cup \Omega_m$ escape uniformly to infinity under iteration.

    Since $f^n(\Omega_1) \subseteq \Omega_{n+1}$ for all $n \ge 1$, the above observation implies that the set $\Omega = \Omega_1$ is a wandering domain.
    To conclude the proof, note that since $\mathring{E} \subseteq \Omega$, we have the estimate
    \[\Area(\C \setminus \Omega) \le \Area(\C \setminus \mathring{E}) = 2 \int_1^\infty g_1(x) \,dx = \delta \int_1^\infty \frac{1}{x^2} \, dx = \delta. \qedhere\]
\end{proof}

\begin{figure}[ht]
    \centering
    \includegraphics[width=0.9\textwidth]{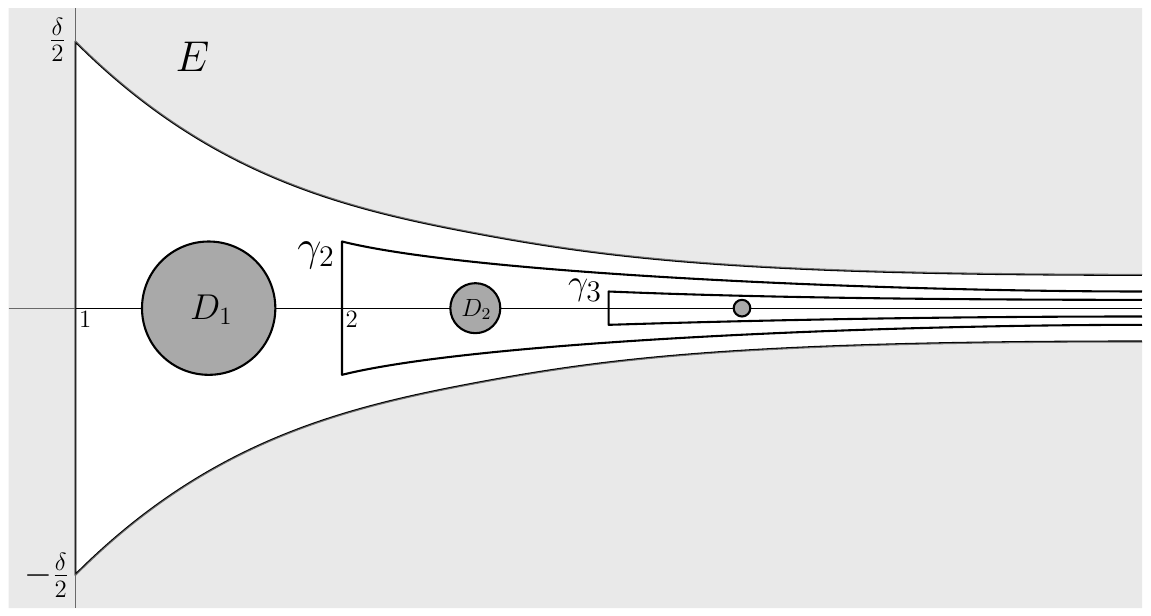}
    \caption{The construction of the set $\Pi$. The discs $(D_k)_{k \ge 2}$ facilitate uniform convergence to infinity, 
    while the curves $(\gamma_k)_{k \ge 2}$ ensure that the discs belong to distinct Fatou components.}
    \label{fig_rog} 
\end{figure} 

\begin{remark}
    Notice that the function $f$ constructed above has an unbounded Fatou component; for example, the preimage of $\mathcal{A}$ containing the curve $\gamma_2$.
    This implies that all the Fatou components of the function $f$ are simply connected, see~\cite[Theorem 3.1.6]{HolomorphicDynamics}.
    In particular, the constructed wandering domain $\Omega$ is also simply connected. 
    This happens in general when one uses unbounded curves to separate wandering sets in order to ensure they belong to different Fatou components.
\end{remark}

\begin{remark}
    Notice that by construction $\mathring{E} \subseteq \Omega$. 
    On the other hand, since the sets $\Omega$ and $\gamma_2$ are disjoint, we also know that
    \[\Omega \subseteq (-\infty, 2] \times \R \cup \set{(x,y) \in [2, \infty) \times \R \mid g_2(x) \le \abs{y}}.\]
    In particular, if we were to add another separating set $\Gamma \subseteq \C \setminus E$, arbitrarily close to the boundary of $E$, and made sure that $f(\Gamma) \subseteq \mathcal{A}$,
    the shape of the wandering domain $\Omega$ could be made arbitrarily close to that of the set $E$.
    This is the main idea behind the proof of Theorems~\ref{approxWD_escaping} and~\ref{approxWD_oscillating}.
\end{remark}

We conclude this section by showing how a small modification of the above construction allows us to construct a periodic Fatou component with complement of arbitrarily small area.

\begin{corollary}
    For each $\delta > 0$ and $p \in \N$ there exists some entire function $f \in \mathcal{O}(\C)$ with a periodic Fatou component $\Omega$ of period $p$ such that $\Area(\C \setminus \Omega) < \delta$.
\end{corollary}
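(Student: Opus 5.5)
We modify the construction in the proof of Theorem~\ref{hornWD}. The main change is that, instead of sending the horn $E$ to infinity along the discs $D_k$, we send it around a cycle of length $p$ that returns into $E$. As before, the curves $\gamma_k$ will be used to separate the pieces of the cycle and the discs from one another.

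We keep $g_k$, $E$ and $\gamma_k$ exactly as in the proof of Theorem~\ref{hornWD}. The attracting disc $D_1 = D(3/2, r_1)$ stays in place, but the remaining discs change: instead of the infinite sequence $(D_k)_{k \ge 2}$ we take only $p-1$ discs $D_2, \ldots, D_p$, still centered at $k + 1/2$ and lying in the gaps between consecutive $\gamma_k$. We also choose a closed disc $D_0 \subseteq \mathring{E}$, for instance $D_0 = D(-1, 1/4)$. The set $\Pi = E \cup \bigcup_{k \ge 2}\gamma_k \cup \bigcup_{k=1}^p D_k$ is again a locally finite union of pairwise disjoint Arakelian sets, hence Arakelian by Proposition~\ref{locallyfiniteArakelian}.

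We then define a locally constant $h$ on $\Pi$ as follows:
\begin{itemize}
\item $h \equiv 3/2$ on $D_1$ and on every $\gamma_k$;
\item $h \equiv 5/2$ on $E$, so that $E$ is sent into $D_2$;
\item $h \equiv k + 3/2$ on $D_k$ for $2 \le k \le p-1$;
\item $h \equiv -1$ on $D_p$, so that $D_p$ is sent back into $D_0 \subseteq E$.
\end{itemize}
For $p = 1$ we simply set $h \equiv -1$ on $E$ and drop the discs $D_2, \ldots, D_p$. We choose the constant errors $\varepsilon$ small enough that each image lands in the interior of its target disc. Proposition~\ref{PosplositevArakelian} then yields an entire $f$ with
\[f(E) \subseteq \mathring{D}_2, \quad f(D_k) \subseteq \mathring{D}_{k+1} \ (2 \le k < p), \quad f(D_p) \subseteq \mathring{D}_0, \quad f(D_1 \cup \textstyle\bigcup_{k} \gamma_k) \subseteq \mathring{D}_1.\]

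Since $f^p(E) \subseteq D_0 \subseteq E$, the iterates $f^n$ are uniformly bounded on $\mathring{E}$, so by Montel's theorem $\mathring{E}$ lies in a single Fatou component $\Omega$. Moreover $f^p$ maps $D_0$ into $\mathring{D}_0$, so it is a strict self-map and $D_0$ lies in an attracting basin of a $p$-periodic cycle passing through $\Omega, \Omega_2, \ldots, \Omega_p$, where $\Omega_k$ is the Fatou component containing $\mathring{D}_k$. In particular $f^p(\Omega) \subseteq \Omega$.

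The main step is to show that the period is exactly $p$, i.e.\ that $\Omega, \Omega_2, \ldots, \Omega_p$ are pairwise distinct and also distinct from the basin component $\mathcal{A}$ containing $D_1$. We argue as in the proof of Theorem~\ref{hornWD}. Any curve joining $D_n$ to $D_m$ with $m < n$, or joining $D_n$ to $E$, must cross $\gamma_n$. But $f(\gamma_n) \subseteq D_1$, and $D_1$ lies in the basin of a fixed point in $D_1$. The points of $\Omega_j$, by contrast, have orbits accumulating on the $p$-cycle in $D_0 \cup D_2 \cup \cdots \cup D_p$, which is disjoint from $D_1$. This is a contradiction, so the components are distinct. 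For $p = 1$ no separation is needed beyond $\Omega \neq \mathcal{A}$, which holds since the fixed points in $D_0$ and $D_1$ are distinct attractors.

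Finally $\mathring{E} \subseteq \Omega$, so the area estimate
\[\Area(\C \setminus \Omega) \le 2\int_1^\infty g_1(x)\,dx = \delta\]
carries over unchanged. To get the strict inequality in the statement, we replace $\delta$ by $\delta/2$ in the definition of $g_k$.
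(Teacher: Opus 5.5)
Your proof is correct, but it differs from the paper's route in two places and carries more machinery than needed.

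For $p \ge 2$ the paper drops the separating curves $\gamma_k$ and the auxiliary basin $D_1$ entirely. It takes only $\Pi = E \cup D_2 \cup \cdots \cup D_p$, with $h = 5/2$ on $E$, $h = k + 3/2$ on $D_k$ for $2 \le k \le p-1$, and $h = 0$ on $D_p$. Distinctness of the components then comes for free: $E, D_2, \ldots, D_p$ each contain a different point of the attracting $p$-cycle. On a Fatou component containing such a cycle point $z$, the iterates $f^{np}$ converge locally uniformly to the constant $z$, so no two of these sets can lie in the same component.

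Your crossing argument already relies on exactly this fact, namely that every point of $\Omega_j$ has its $f^{kp}$-orbit converging to the cycle point in $D_j$. Once you have it, $\Omega_j \neq \Omega_l$ follows directly because the cycle points differ. The detour through $\gamma_n$ and the fixed point in $D_1$ is therefore redundant. What the curves do buy is extra shape control: $\Omega$ avoids the region bounded by $\gamma_2$, and each $\Omega_k$ is confined between $\gamma_k$ and $\gamma_{k+1}$. The statement does not need this.

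For $p = 1$ the paper avoids approximation altogether and uses $z \mapsto \lambda e^z$ with $0 < \lambda < 1/e$. Its Fatou set is a single attracting basin whose complement, the Julia set, has area zero, which is stronger than what your uniform construction gives.

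Finally, your replacement of $\delta$ by $\delta/2$ is a genuine, if minor, improvement. The computation $2\int_1^\infty g_1(x)\,dx = \delta$ by itself only yields $\Area(\C \setminus \Omega) \le \delta$, a point the paper passes over.
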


\begin{proof}
    For the case $p = 1$, we can give an explicit example. 
    Recall that the Fatou set of the exponential family $z \mapsto \lambda e^z$ with $0 < \lambda < 1/e$ consists of just one attracting basin $\Omega$, see~\cite[Proposition 3.2.1]{HolomorphicDynamics}. 
    In particular, $\C \setminus \Omega$ is the Julia set of the function which has area zero. 
    
    For the case $p \ge 2$, we proceed similarly as in the proof of Theorem~\ref{hornWD} above. 
    Define the function $g(x) = \frac{\delta}{2}x^{-2}$ and denote $r = \min\set{\frac{1}{4}, \, \frac{1}{2}g\left(k + \frac{1}{2}\right) \mid 2 \le k \le p}$.
    This allows us to construct the following sets: 
    \begin{align*}
        E &= (-\infty, 1] \times \R \cup \set{(x,y) \in [1, \infty) \times \R \middle \vert g(x) \le \abs{y}} \\
        D_k &= D\left(k + 1/2, r\right) \text{ for } 2 \le k \le p.
    \end{align*}
    We then define the set $\Pi = E \cup \bigcup_{k=2}^{p} D_k$ and observe that it is Arakelian. 
    Take $\varepsilon = r/2$ and define the locally constant function $h \in \mathcal{O}(\Pi)$ by 
    \[h(z) = \begin{cases} 
        \frac{5}{2}; & z \in E \\
        k + \frac{3}{2}; & z \in D_k \text{ for } 2 \le k \le p-1 \\
        0; & z \in D_p.
    \end{cases}\] 
    Applying Proposition~\ref{PosplositevArakelian} we obtain an entire function $f \in \mathcal{O}(\C)$ such that 
    $\abs{f(z)-h(z)} < \varepsilon$ for all $z \in \Pi$.
    In particular, this implies $f(E) \subseteq \mathring{D}_2$, $f(D_k) \subseteq \mathring{D}_{k+1}$ for $2 \le k \le p-1$, and $f(D_{p-1}) \subseteq \mathring{D}(0, 1/4) \subseteq \mathring{E}$.
    It follows that the function $f^p$ acts as a contraction on each of the sets $E$ and $D_2, \ldots, D_{p}$,
    hence each set is contained in a distinct fixed attracting basin $\Omega_1$ and $\Omega_2, \ldots, \Omega_{p}$ respectively.
    It is clear from the construction that these form an attracting cycle of period $p$, 
    and a similar computation as in the proof of Corollary~\ref{hornWD} for $\Omega = \Omega_1$ shows that $\Area(\C \setminus \Omega) < \delta$.
\end{proof}

\section{Wandering domains of approximate shape} \label{sec6}

The goal of this final section is to prove Theorems~\ref{approxWD_escaping},~\ref{approxWD_oscillating}, and~\ref{combination}.
As already outlined in the previous section, the main idea of the proofs is to use a separating set $\Gamma$, 
that is sent to an attracting basin, as an upper bound for the size of the wandering domains.
In the construction of the previous section, $\Gamma$ can be taken to be a sufficiently nice curve. 
To deal with the more general setting of our theorems, we instead rely on the following lemma.   

\begin{lemma}\label{separate}
    Let $E \subseteq U \subseteq \C$, where $E$ is a closed set with locally finite connected components and $U$ is a simply connected open set whose connected components are all unbounded.
    Then there exists an unbounded Arakelian set $\Gamma \subseteq U \setminus E$ such that any connected component of the set $\C \setminus \Gamma$ intersects either $\C \setminus U$ or at most one connected component of $E$. 
\end{lemma}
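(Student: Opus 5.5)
We will build $\Gamma$ as a locally finite, pairwise disjoint family of closed "fences", each an Arakelian set, and then invoke Proposition~\ref{locallyfiniteArakelian} to conclude that the union is Arakelian. Let $(E_j)_{j\ge1}$ be the components of $E$, a locally finite family of pairwise disjoint closed sets. Fix a component $U_0$ of $U$; it is simply connected and unbounded. Since $U_0\ne\C$ (the case $U=\C$ would make $\C\setminus U$ empty, and then $\Gamma$ must separate all the $E_j$ from one another, which is handled by the same construction), a Riemann map $\phi\colon\D\to U_0$ is available. We work in $U_0$ with the hyperbolic geometry, or more concretely with a locally finite decomposition of $U_0$.

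Step 1 (\emph{tiling}). We exhaust $U_0$ by a locally finite cover by closed squares of a Whitney-type decomposition of $U_0$, with side length comparable to the distance to $\partial U_0$, refined further so that, by local finiteness, each square meets at most one $E_j$ and squares meeting distinct $E_j$ do not touch. This is possible because distinct components of $E$ have positive distance locally. Let $Q_j$ be the union of the closed squares meeting $E_j$, together with all squares adjacent to these; after refinement the sets $Q_j$ are pairwise disjoint. Set $G=\overline{U_0\setminus\bigcup_j Q_j}$ minus a thin neighbourhood of $\bigcup_j Q_j$, or more simply take $\Gamma_0$ to be the union of the boundaries $\partial Q_j$. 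Each $\partial Q_j$ locally is a finite union of segments, lies in $U\setminus E$, and separates $E_j$ from every other $E_k$ and from $\C\setminus U$. Since the family is locally finite in $\C$ (squares shrink near $\partial U_0$), every bounded component of the complement is a bounded region enclosed by such polygons.

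Step 2 (\emph{making it Arakelian}). The fences $\partial Q_j$ may be bounded closed curves, and these produce holes; a closed Jordan curve is not Arakelian. We therefore cut each bounded loop open by joining it to infinity or to $\partial U$: we choose a locally finite family of pairwise disjoint "gates", namely short open arcs removed from each loop. The remaining arcs are compact sets without holes, hence Arakelian, and the unbounded arcs (polygonal paths going to $\infty$ inside $U_0$) are Arakelian because they are closed, locally finite polygonal curves with connected complement that is locally connected at $\infty$. The catch is that removing gates breaks separation. The remedy, following the remark after Theorem~\ref{hornWD}, is to require only that every component of $\C\setminus\Gamma$ meet at most one $E_j$ or else $\C\setminus U$. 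We therefore place each gate so that it opens toward $\partial U$: the gate connects the inside of $Q_j$ to a channel that runs, inside $U_0\setminus\bigcup_k Q_k$, to $\C\setminus U$ or to $\infty$ along a region reaching $\C\setminus U$. Components containing $E_j$ then meet $\C\setminus U$, which is allowed. Alternatively, each loop can be replaced by a double spiral or a "comb" so that no hole forms. We take unboundedness of $\Gamma$ from including at least one unbounded arc.

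The main obstacle is Step 2: ensuring simultaneously no holes, the BEH property, and the separation condition. For BEH we rely entirely on local finiteness together with Proposition~\ref{locallyfiniteArakelian}, so the fence pieces must be chosen as disjoint compact arcs, each of which is trivially Arakelian. The delicate point is routing the channels so that they exit to $\C\setminus U$ without ever passing near two different $E_j$. We will do this by choosing the channels along radial geodesics of $U_0$ (images under $\phi$ of radii), made disjoint from the $Q_j$ by perturbing the tiling.
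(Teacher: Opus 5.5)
\textbf{There is a genuine gap, in Step 2: it rests on a misreading of the separation requirement.}

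The condition is meant in the exclusive sense, and this is how it is used in the proof of Theorem~\ref{approxWD_escaping}. A component of $\C\setminus\Gamma$ that meets $E$ must not meet $\C\setminus U$, and it may meet only one component of $E$. Equivalently, every path from $E$ to $\C\setminus U$, and every path joining two distinct components of $E$, must hit $\Gamma$.

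Your remedy opens each fence $\partial Q_j$ through a gate into a channel that reaches $\C\setminus U$. The component of $\C\setminus\Gamma$ containing $E_j$ then meets $\C\setminus U$, which is exactly what has to be excluded: later, a path inside the wandering domain running from $E$ to $\C\setminus U$ must be blocked by $\Gamma$. Under your inclusive reading the lemma would say nothing, since any unbounded Arakelian $\Gamma\subseteq U\setminus E$ with connected complement would satisfy it as soon as $U\neq\C$.

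The spiral or comb alternative does not help either. An Arakelian $\Gamma$ has no holes, so the component of $\C\setminus\Gamma$ containing a bounded $E_j$ is necessarily unbounded. No hole-free fence can enclose $E_j$.

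\textbf{The missing idea.} The escape route forced by the absence of holes must go to $\infty$ inside $U$, and it must itself be fenced off from $\C\setminus U$ and from the other components of $E$. In other words, the fences have to be unbounded.

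The paper arranges this at the outset. It attaches to each bounded component of $E$ a ray in $U\setminus E$, preserving local finiteness. Then $\widehat{\C}\setminus(U\setminus E)=(\widehat{\C}\setminus U)\cup E$ is connected, so $U\setminus E$ is simply connected. After that, no hand-built polygonal fences or gate bookkeeping are needed:
\begin{enumerate}
\item Choose pairwise disjoint open sets $V_j\subseteq U$, each around one component of $E$.
\item Take an Urysohn function $\chi$ equal to $1$ on $E$ and $0$ off $\bigcup_j V_j$.
\item For $0<t<1$, the level set $\chi^{-1}(t)$ meets every path from $E$ to $\C\setminus U$ and every path between distinct components of $E$, by the intermediate value theorem.
\item Proposition~\ref{Arakelian_so_gosti}, applied inside the simply connected set $U\setminus E$, thickens $\chi^{-1}(t)$ to an Arakelian $\Gamma\subseteq U\setminus E$. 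The hole-filling in that proposition is harmless, because it stays in $U\setminus E$ and only enlarges $\Gamma$.
\end{enumerate}

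\textbf{Secondary issues in Step 1.} These would also need fixing, but they are minor next to the above.
\begin{itemize}
\item The full Whitney family is not locally finite at points of $\partial U_0$; only the squares near $E$ are.
\item $\partial Q_j$ can have inner loops, and hence holes, even when $E_j$ is unbounded.
\item You treat only one component $U_0$ of $U$, while $E$ may meet several.
\end{itemize}
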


\begin{proof}
    Since the connected components of $U$ are all unbounded and the connected components of $E$ are locally finite, we may assume that $E$ also only has unbounded connected components.
    For example, we can add a ray, contained in $U \setminus E$, to every bounded component of $E$.
    Note that after this modification, the connected components of $E$ will still be locally finite.

    Next, since the connected components of $E$ are locally finite and pairwise disjoint, 
    we can find a family $\set{V_j}_{j \in J}$ of pairwise disjoint open sets contained in $U$, such that each set $V_j$ contains one connected component of $E$.
    We denote $V = \bigcup_{j \in J} V_j$.
    Since $E$ and $\C \setminus V$ are disjoint closed sets, Urysohn's lemma gives a continuous function $\chi \colon \C \to [0,1]$ such that $\chi(\C \setminus V) = \set{0}$ and $\chi(E) = \set{1}$.
    Pick some $t \in (0,1)$ and note that $\chi^{-1}(\set{t}) \subseteq V \setminus E \subseteq U \setminus E$ is unbounded.
    Since the set $\widehat{\C} \setminus (U \setminus E) = (\widehat{\C} \setminus U) \cup E$ is connected, the set $U \setminus E$ is simply connected. Here, we used the assumption that all the components of $E$ are unbounded.
    Thus Proposition~\ref{Arakelian_so_gosti} tells us that there exists an Arakelian set $\Gamma$ such that $\chi^{-1}(\set{t}) \Subset \Gamma \subseteq U \setminus E$.
    Note that since the set $\chi^{-1}(\set{t})$ is unbounded, the set $\Gamma$ is also unbounded.
    
    It remains to see that $\Gamma$ has the desired separation property. 
    First, let $\eta$ be a path that connects a point in $E$ to a point in $\C \setminus U$. 
    Since $\C \setminus U \subseteq \C \setminus V$, the function $\chi$ achieves the values $0$ and $1$ in the endpoints of $\eta$.
    Then, by the continuity of $\chi$, there exits a point $p \in \eta$ such that $\chi(p) = t$, hence $\eta \cap \Gamma \neq \emptyset$.
    Now suppose that $\eta$ is a path with endpoints in different connected components of $E$. Then these endpoints lie in two disjoint sets $V_j$, and we must have $\eta \cap (\C \setminus V) \neq \emptyset$.
    Thus $\chi$ again achieves the values $0$ and $1$ along $\eta$, so by the same reasoning as before, $\eta \cap \Gamma \neq \emptyset$ follows.
    Both observations together imply that a connected component of the set $\C \setminus \Gamma$ either intersects $\C \setminus U$ or at most one connected component of the set $E$, as desired.
\end{proof}

To construct escaping wandering domains, we employ the same strategy as in the proof of Theorem~\ref{hornWD}, namely,
we use discs to allow the closed set to escapes to infinity uniformly under iteration, and unbounded curves to separate the resulting Fatou components. 
That such sets can always be chosen, and how they are used to construct escaping wandering domains, are established in the following two lemmas.

\begin{lemma}\label{curves_and_discs}
    Let $W \subseteq \C$ be an unbounded connected open set. 
    Then there exist a family of closed discs $(D_n)_{n \ge 1}$ and a family of Arakelian sets $(\gamma_n)_{n \ge 1}$ such that the following holds:
    \begin{enumerate}
        \item The sets $(\gamma_n)_{n \ge 1}$ and $(D_n)_{n \ge 1}$ are pairwise disjoint and locally finite,
        \item $\bigcup_{n \ge 1} (D_n \cup \gamma_n) \subseteq W$ and $D_n \cap D(0,n) = \emptyset$ for all $n \ge 1$,
        \item each disc $D_n$ is contained in a different connected component of the set $\C \setminus \bigcup_{k \ge 1} \gamma_k$. 
    \end{enumerate}
\end{lemma}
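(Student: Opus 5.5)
Since $W$ is an unbounded connected open set, I will first fix an unbounded simple curve $\sigma$ in $W$ going to $\infty$, together with a positive continuous radius function $r$ along it, such that the tube $T = \bigcup_{z\in\sigma} \mathring{D}(z,r(z))$ is contained in $W$. Such a curve exists: take a locally finite chain of overlapping discs in $W$ escaping to infinity (using connectedness and unboundedness of $W$, e.g.\ pick points $w_n \in W$ with $|w_n|\to\infty$ and join consecutive ones by polygonal paths in $W$), then extract a simple curve in their union. Along $\sigma$ I choose parameters $t_1 < s_1 < t_2 < s_2 < \cdots \to 1$ such that $|\sigma(t)| > n+1$ for $t \ge t_n$, which is possible since $\sigma(t)\to\infty$.

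The discs will be $D_n = D(\sigma(t_n), \rho_n)$ with $\rho_n$ small, so that $D_n \subseteq T$, $D_n\cap D(0,n)=\emptyset$, and the discs are pairwise disjoint. The separating sets will be ``cross-cuts'' of the tube: for each $n$, $\gamma_n$ should be a closed set in $W$ meeting $\sigma$ at $\sigma(s_n)$ and separating $D_n$ from $D_{n+1}$. To make this work cleanly I would rather work in a model: the tube is awkward, so instead I will use the following concrete choice. Let $\gamma_n$ be an unbounded Arakelian set in $W$ that intersects every path in $W$ from $D_n$ to $D_{n+1}$. Concretely, I would take $\gamma_n$ to be a small closed disc $\Delta_n \subseteq T$ around $\sigma(s_n)$ together with... but a bounded set cannot separate inside $\C$, so separation must use $\C\setminus W$ or unbounded pieces. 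Since condition (3) only asks separation in $\C \setminus \bigcup \gamma_k$ (not relative to $W$), the $\gamma_n$ must genuinely block $\C$. Hence I will build the $\gamma_n$ as unbounded sets: fix the subtube $T' \subseteq T$ of half radius, and let $\gamma_n$ be the closure of a thin neighbourhood of a level set, exactly as in Lemma~\ref{separate}: apply Lemma~\ref{separate} with $U = T$ (simply connected, one unbounded component, after shrinking $T$ to be a topological half-strip homeomorphic image of $[0,\infty)\times(-1,1)$) and $E = \bigcup_n D_n$, whose components are locally finite. This yields a single unbounded Arakelian set $\Gamma \subseteq T\setminus E$ such that every component of $\C\setminus\Gamma$ meets $\C\setminus T$ or at most one $D_n$.

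The remaining issue, which I expect to be the main obstacle, is that components of $\C\setminus\Gamma$ meeting $\C\setminus T$ could also contain a disc, so two discs might share a component via the outside of $T$. To fix this, I will instead enclose each disc: in the half-strip coordinates $\Phi\colon T\to (0,\infty)\times(-1,1)$ (a homeomorphism, even conformal via the Riemann map with the end at $\infty$), take $\gamma_n = \Phi^{-1}(\{n\}\times[-1+\eta_n,1-\eta_n])$ joined with $\Phi^{-1}$ of thin side strips near $\{|y| = 1-\eta\}$ between $n-1$ and $n$, giving closed ``U-shaped'' pieces; together the side walls form two unbounded curves hugging $\partial T$ and the rungs separate consecutive discs $D_n = \Phi^{-1}(\text{disc around } (n-\tfrac12,0))$. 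Each union of walls and rungs is a closed set without holes? No — walls plus rungs create bounded complementary pieces containing the discs, which is exactly what we want for (3), while each individual $\gamma_n$ (one rung plus wall segments $[n-1,n]$, disjoint from neighbours by leaving tiny gaps that are closed by overlap avoidance) is a compact or arc-like set without holes, hence Arakelian. I would thus take $\gamma_n$ compact arcs (U-shapes), pairwise disjoint, so that the union has bounded cells, each cell containing exactly one disc; local finiteness follows from $\Phi^{-1}$ sending $[n-1,n]$-pieces off to infinity, and the main care is ensuring small gaps between consecutive $\gamma_n$ do not connect cells, which I handle by making $\gamma_n$ a single closed Jordan-arc that nearly closes and letting $\gamma_{n+1}$ overlap-avoid by nesting.
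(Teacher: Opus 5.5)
Your final construction has a genuine gap, and in fact it cannot work. You make the $\gamma_n$ pairwise disjoint, locally finite compact arcs and want their union to cut $T$ into \emph{bounded} cells, one disc per cell. But a locally finite, pairwise disjoint family of Arakelian sets has Arakelian union (Proposition~\ref{locallyfiniteArakelian}). So $\bigcup_k\gamma_k$ has no holes, every component of $\C\setminus\bigcup_k\gamma_k$ is unbounded, and bounded cells are impossible.

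It is worse than that when all $\gamma_k$ are compact arcs. Then $\{\infty\}$ is a component of the closed set $\{\infty\}\cup\bigcup_k\gamma_k\subseteq\widehat{\C}$, and no component of that set separates the sphere. By the classical fact that a closed set separating two points of the sphere has a component separating them, $\C\setminus\bigcup_k\gamma_k$ is connected, so no two discs can be separated at all.

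Concretely, disjointness forces gaps between the rung of $\gamma_n$ and the walls of $\gamma_{n+1}$. These gaps open every cell into the collar between the walls and $\partial T$, and ``nesting'' cannot close them. Your remark that ``a bounded set cannot separate'' is correct only for sets without holes, and it is exactly what dooms this fallback.

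The missing idea is that the separators must be \emph{unbounded}, and each disc must sit in its own \emph{unbounded} complementary component. The paper takes $W_0\subseteq W$ simply connected, bounded by a Jordan curve through $\infty$, and a Riemann map $\phi\colon W_0\to(0,\infty)\times(-1,1)$ extending homeomorphically to the boundary with $\phi(\infty)=\infty$. It then pulls back the nested unbounded brackets formed by the segment $\{n\}\times[-2^{-n},2^{-n}]$ and the two horizontal rays $[n,\infty)\times\{\pm 2^{-n}\}$. The region between consecutive brackets has boundary $\gamma_n\cup\gamma_{n+1}\cup\{\infty\}$ in $\widehat{\C}$, and one disc is placed in each such region.

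The route you abandoned, via Lemma~\ref{separate}, would actually have worked. The leakage through $\C\setminus T$ that you feared cannot occur, and you can see this from that lemma's proof rather than its statement. There the Urysohn function $\chi$ equals $1$ on $E$ and $0$ outside the pairwise disjoint neighbourhoods $V_j$. Hence every path joining two different components of $E$ meets $\chi^{-1}(\{t\})\subseteq\Gamma$, independently of $\C\setminus U$.

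Applying it with $U=T$ and $E=\bigcup_n D_n$ therefore places the discs in distinct components of $\C\setminus\Gamma$. You may then take the $\gamma_n$ to be the connected components of $\Gamma$. These are locally finite, since $\Gamma$ comes from Proposition~\ref{Arakelian_so_gosti}, hence isolated, hence Arakelian.

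This trades the paper's explicit conformal model for the Urysohn and Proposition~\ref{Arakelian_so_gosti} machinery. The cost is that the separation has to be extracted from the proof of Lemma~\ref{separate}, not from its statement.
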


\begin{proof}
    Since $W$ is an unbounded connected open set, there exists an unbounded, connected, and simply connected open subset $W_0 \subseteq W$ which has a Jordan curve as its boundary.
    One way to see this is to take a proper embedded unbounded smooth curve in $W$ and then apply the tubular neighbourhood theorem, see~\cite[Theorem 6.24]{Lee}.
    Let $S = (0, \infty) \times (-1,1)$ and let $\phi \colon W_0 \to S$ be a Riemann map, such that $\phi(\infty) = \infty$.
    Note that by our assumptions, the function $\phi$ extends to a homeomorphism on $\partial W_0$.
    Finally, we define a family of curves $(\tilde{\gamma}_n)_{n \ge 1} \subseteq S$ as
    \[\tilde{\gamma}_n = \set{n} \times \left[-2^{-n}, 2^{-n}\right] \cup \set{-2^{-n}, 2^{-n}} \times [n, \infty).\]
    It is easy to see that the family $(\tilde{\gamma}_n)_{n \ge 1}$ consists of locally finite pairwise disjoint Arakelian sets.

    We define the family of curves $(\gamma_n)_{n \ge 1} \subseteq W_0$ by setting $\gamma_n = \phi^{-1}(\tilde{\gamma}_n)$.
    Since $\phi \colon W_0 \to S$ is a homeomorphism and $\phi(\infty) = \infty$, 
    it follows immediately that the family $(\gamma_n)_{n \ge 1}$ consists of locally finite pairwise disjoint closed subsets of $\C$.
    We claim that these sets are also Arakelian. Let $V \subseteq \C$ be an open neighbourhood of $\gamma_n$.
    Then $\phi(W_0 \cap V) \subseteq S$ is an open neighbourhood of the Arakelian set $\tilde{\gamma}_n$, so 
    by Theorem~\ref{characterization} there exists a simply connected open set $U$ such that $\tilde{\gamma}_n \subseteq U \subseteq \phi(W_0 \cap V)$.
    Since $\phi$ is a homeomorphism, $\phi^{-1}(U) \subseteq W_0$ is a simply connected open set such that $\gamma_n \subseteq \phi^{-1}(U) \subseteq V$.
    It now follows from Theorem~\ref{characterization} that the set $\gamma_n$ is Arakelian.

    Finally, since the function $\phi$ extends to a homeomorphism on $\partial W_0$, it is easy to verify that the set $W_0 \setminus \bigcup_{k \ge 1} \gamma_k$ consists of infinitely many unbounded connected components, 
    hence the same holds for the set $\C \setminus \bigcup_{k \ge 1} \gamma_k$.
    We now choose a locally finite family of closed discs $(D_n)_{n \ge 1} \subseteq W$, 
    such that $D_n \cap D(0,n) = \emptyset$ and each disc $D_n$ lies in a different connected component of $\C \setminus \bigcup_{k \ge 1} \gamma_k$.
    Since both families $(\gamma_n)_{n \ge 1}$ and $(D_n)_{n \ge 1}$ are locally finite, their union is also locally finite. 
    Furthermore, it is clear from the construction that all the sets are pairwise disjoint.
\end{proof}

\begin{lemma}\label{modelEscaping}
    Let $W \subseteq \C$ be an unbounded connected open set. 
    Then there exist an Arakelian set $A \subseteq W$, a holomorphic function $h \in \mathcal{O}(A)$, a locally constant function $\varepsilon \colon A \to (0, \infty)$, and a closed disc $B \subseteq A$ such that 
    if $f \in \mathcal{O}(\C)$ is an entire function satisfying $\abs{f(z)-h(z)} < \varepsilon(z)$ for all $z \in A$, then $f$ has an escaping wandering domain containing $B$.
\end{lemma}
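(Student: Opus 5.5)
We will mimic the proof of Theorem~\ref{hornWD}, using Lemma~\ref{curves_and_discs} to produce the discs and separating sets inside $W$. Apply Lemma~\ref{curves_and_discs} to $W$ to obtain closed discs $(D_n)_{n \ge 1}$ and Arakelian sets $(\gamma_n)_{n \ge 1}$ with the listed properties. We also need a disc serving as an attracting basin. Since the discs are locally finite and pairwise disjoint from the $\gamma_k$, we can choose a small closed disc $D_0 \subseteq W$ disjoint from all $D_n$ and $\gamma_n$; for instance, a small disc inside the interior of $D_1$ after shrinking $D_1$. Alternatively, we reserve $D_1$ as the basin and re-index the remaining discs. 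For simplicity, we rename so that $D_0$ is the basin disc and $(D_n)_{n \ge 1}$ are the escaping discs, each lying in a different component of $\C \setminus \bigcup_k \gamma_k$ and satisfying $D_n \cap D(0,n) = \emptyset$.

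Set $A = D_0 \cup \bigcup_{n \ge 1} (D_n \cup \gamma_n)$. By property (1), this is a locally finite pairwise disjoint family of Arakelian sets, so $A$ is Arakelian by Proposition~\ref{locallyfiniteArakelian}, and $A \subseteq W$. Let $c_n$ and $r_n$ denote the centre and radius of $D_n$. Define $h$ to be locally constant: $h = c_0$ on $D_0$, $h = c_{n+1}$ on $D_n$ for $n \ge 1$, and $h = c_0$ on every $\gamma_n$. Define $\varepsilon = r_0/2$ on $D_0$ and on each $\gamma_n$, and $\varepsilon = r_{n+1}/2$ on $D_n$. Take $B = D_1$. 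If $\abs{f - h} < \varepsilon$ on $A$, then $f(D_0) \subseteq \mathring{D}_0$ and $f(\gamma_n) \subseteq \mathring{D}_0$. Hence $D_0$ lies in an attracting basin $\mathcal{A}$ (as $f$ maps the disc strictly inside itself, Schwarz--Pick gives a contraction), and every $\gamma_n \subseteq f^{-1}(\mathcal{A})$. Moreover $f(D_n) \subseteq \mathring{D}_{n+1}$, so $f^k(B) \subseteq D_{k+1}$, and $D_{k+1} \cap D(0,k+1) = \emptyset$. Therefore the iterates converge uniformly to $\infty$ on $B$, so $\mathring{D}_n$ lies in a Fatou component $\Omega_n$ on which $f^k \to \infty$, with $f^k(\Omega_1) \subseteq \Omega_{k+1}$.

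It remains to show that the $\Omega_n$ are pairwise distinct, which is the main step. This follows exactly as in the proof of Theorem~\ref{hornWD}. Suppose $\Omega_n = \Omega_m$ with $n \ne m$. Then a curve in this Fatou component joins $D_n$ to $D_m$. Since these discs lie in different components of $\C \setminus \bigcup_k \gamma_k$ by property (3), the curve meets some $\gamma_k$, whose points are mapped into $\mathcal{A}$. This contradicts the fact that the iterates tend to $\infty$ on the whole component. Hence $f^k(\Omega_1) \cap \Omega_1 = \emptyset$ for $k \ge 1$. Since $f^k(\Omega_1)$ lies in the single component $\Omega_{k+1}$, the orbit components are pairwise disjoint, so $\Omega_1 \supseteq B$ is a wandering domain, and it is escaping.

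The only subtle point is choosing the basin disc $D_0$ inside $W$ without breaking the separation property. Taking $D_0$ to be one of the discs supplied by Lemma~\ref{curves_and_discs} avoids this, since that disc is automatically disjoint from everything else.
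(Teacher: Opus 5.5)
Your proposal is correct and is essentially the paper's proof. The paper also builds $A=\bigcup_k(\gamma_k\cup D_k)$ from Lemma~\ref{curves_and_discs}, uses the first disc $D_1$ itself as the attracting-basin disc, sends every $\gamma_k$ into it, shifts $D_n$ into $D_{n+1}$ for $n\ge 2$, and separates the escaping Fatou components exactly as in Theorem~\ref{hornWD}. The only difference is that the paper takes $B\subseteq\mathring{D}_2$ rather than a whole closed disc. With your choice $B=D_1$ (your indexing), add one line: since $f(D_1)\subseteq\mathring{D}_2$, a neighbourhood of $D_1$ is mapped into $\mathring{D}_2$, so $\partial B$ also lies in the Fatou set and $B\subseteq\Omega_1$.
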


\begin{proof}
    Let $(D_n)_{n \ge 1}$ and $(\gamma_n)_{n \ge 1}$ be the sets obtained by applying Lemma~\ref{curves_and_discs} to the open set $W$.
    For $n \ge 1$, let $a_n$ denote the center and $r_n$ the radius of the disc $D_n$. We then define the set 
    \[A = \bigcup_{k=1}^\infty \gamma_k \cup \bigcup_{k=1}^\infty D_k\]
    and choose a closed disc $B \subseteq \mathring{D}_2$. Notice that $B \subseteq A \subseteq W$. 
    Moreover, since the families $(\gamma_n)_{n \ge 1}$ and $(D_n)_{n \ge 1}$ are pairwise disjoint, locally finite, and consist of Arakelian sets, 
    it follows from proposition~\ref{locallyfiniteArakelian} that the set $A$ is also Arakelian.
    Finally, we define the locally constant functions $h \in \mathcal{O}(A)$ and $\varepsilon \colon A \to (0, \infty)$ by
    \[h(z) = \begin{cases} 
        a_1; & z \in \gamma_n \text{ for } n \ge 1 \\ 
        a_1; & z \in D_1\\ 
        a_{n+1}; & z \in D_n \text{ for } n \ge 2 
    \end{cases} 
    \quad \text{and} \quad 
    \varepsilon(z) = \begin{cases}
        \frac{r_1}{2}; & z \in \gamma_n \text{ for } n \ge 1 \\ 
        \frac{r_1}{2}; & z \in D_1\\ 
        \frac{r_{n+1}}{2}; & z \in D_n \text{ for } n \ge 2 
    \end{cases}.\]

    Now suppose that $f \in \mathcal{O}(\C)$ is an entire function satisfying $\abs{f(z)-h(z)} < \varepsilon(z)$ for all $z \in A$.
    It follows that $f(D_1) \subseteq \mathring{D}_1$ as well as $f(\gamma_n) \subseteq \mathring{D}_1$ and $f(D_{n+1}) \subseteq \mathring{D}_{n+2}$ for all $n \ge 1$.
    The function $f$ thus acts as a contraction on the set $D_1$, so $D_1$ is contained in some attracting basin $\mathcal{A}$ and $\bigcup_{k \ge 1} \gamma_k \subseteq f^{-1}(\mathcal{A})$. 
    On the other hand, since $f(D_n) \subseteq \mathring{D}_{n+1}$ and $D_n \cap D(0,n) = \emptyset$ for all $n \ge 2$, 
    the points in the discs $D_n$ escape uniformly to infinity under iteration.
    Thus, the interior of each disc is contained in some Fatou component.
    But since $\bigcup_{k \ge 1} \gamma_k \subseteq f^{-1}(\mathcal{A})$ and each disc $D_n$ is contained in a distinct connected component of the set $\C \setminus \bigcup_{k \ge 1} \gamma_{k}$,
    the same reasoning as in the proof of Theorem~\ref{hornWD} shows that these Fatou components are disjoint and hence form the orbit of an escaping wandering domain containing $B$.
\end{proof}

For the oscillating case, we instead make use of a classical result by Eremenko and Lyubich. 
In~\cite[Example 1]{EremenkoLjubichPathologicalExamples}, the authors use Arakelian approximation to construct an entire function with an oscillating wandering domain. 
This is, in fact, the first known example of an entire function exhibiting an oscillating wandering domain. 
A careful examination of the proof reveals that the construction is parametric in the following sense.

\begin{lemma}\label{modelOscillating}
    Let $(r_m)_{m \ge 1}$ be a decreasing sequence of positive numbers and $(a_m)_{m \ge 1}$ a sequence of complex numbers such that $\lvert a_{m+1} \rvert > 2r_1 + \lvert a_m \rvert$ for all $m \ge 1$.
    Then there exist an Arakelian set $A \subseteq \C$, a holomorphic function $h \in \mathcal{O}(A)$, a locally constant function $\varepsilon \colon A \to (0, \infty)$, and a closed disc $B \subseteq A$ such that:
    \begin{enumerate}
        \item $\{a_m \mid m \ge 1\} \subseteq A \subseteq \bigcup_{m \ge 1} D(a_m, r_m)$,
        \item if $f \in \mathcal{O}(\C)$ is an entire function satisfying $\abs{f(z)-h(z)} < \varepsilon(z)$ for all $z \in A$, as well as $f(a_m) = h(a_m)$ and $f'(a_m) = h'(a_m)$ for all $m \ge 1$, then $f$ has an oscillating wandering domain containing $B$.
    \end{enumerate}
\end{lemma}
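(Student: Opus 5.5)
The plan is to reconstruct the Eremenko--Lyubich model from \cite[Example 1]{EremenkoLjubichPathologicalExamples} explicitly, keeping track of which data are free parameters. Write $D_m = D(a_m, r_m)$. The growth condition $\lvert a_{m+1}\rvert > 2r_1 + \lvert a_m\rvert$ makes these discs pairwise disjoint, and since $\lvert a_m\rvert \to \infty$ at a linear rate the family is locally finite.

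\emph{Construction of $A$.} Inside $D_1$ I will place a fixed point $a_1$ together with a sequence of pairwise disjoint closed discs $Q_k = D(b_k, s_k) \subseteq D_1 \setminus \{a_1\}$ with $b_k \to a_1$ and $s_k \to 0$. Inside each $D_m$ with $m \ge 2$ I take a closed disc $D'_m \subseteq \mathring{D}_m$ that contains $a_m$. Set
\[ A = \{a_1\}[\rho] \cup \bigcup_{k \ge 1} Q_k \cup \bigcup_{m \ge 2} D'_m , \]
where $\{a_1\}[\rho]$ is replaced by a small closed disc around $a_1$ disjoint from the $Q_k$ near it; since $Q_k$ accumulates at $a_1$, I will instead choose $Q_k$ outside that small disc but accumulating on its boundary circle. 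All pieces are disjoint closed discs forming a locally finite family, so $A$ is Arakelian by Proposition~\ref{locallyfiniteArakelian}. Property (1) of the lemma is then immediate.

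\emph{The model map $h$.} Near $a_1$, let $h(z) = a_1 + 2(z - a_1)$, so $a_1$ is a repelling fixed point. With the interpolation conditions $f(a_1) = a_1$ and $f'(a_1) = 2$, this point stays repelling for $f$ and so lies in $J(f)$. For the escape-and-return itinerary, let $h$ map $Q_k$ affinely and strictly contractingly into $D'_2$, map $D'_m$ into $D'_{m+1}$ for $2 \le m < k+1$, and map $D'_{k+1}$ back into $Q_{k+1}$. To make each $D'_m$ serve a whole sequence of return times, I use the $N_n$-bookkeeping from the proof of Theorem~\ref{stripWDoscillating}: split $D'_m$ into disjoint subdiscs indexed by the current excursion length, each mapped into the appropriate next subdisc. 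For each $m \ge 2$ I put $h(a_m) = a_1$ and $h'(a_m) \neq 0$, with $a_m$ placed between these subdiscs. Then $a_m \in f^{-1}(a_1) \subseteq J(f)$, and by the derivative interpolation $f$ is locally univalent near $a_m$, so $J(f)$ contains a small arc or neighbourhood of points around $a_m$ separating the subdiscs.

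\emph{Choice of $\varepsilon$ and consequences.} Take $\varepsilon$ equal to half the gap between each target disc and the boundary of the disc containing it. If $f$ is entire with $\lvert f - h\rvert < \varepsilon$ on $A$ and the interpolation conditions hold, then $B := Q_1$ satisfies: $f^n(B)$ follows the itinerary $Q_1 \to D'_2 \to Q_2 \to D'_2 \to D'_3 \to Q_3 \to \cdots$. Along this orbit the iterates are uniformly bounded on the return times, converging to $a_1$, and tend uniformly to infinity along the excursion times. Hence $\mathring{B}$ lies in the Fatou set, and its limit functions include both $\infty$ and $a_1$.

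\emph{Main obstacle: wandering.} The hardest step is showing that the images $f^n(B)$ lie in distinct Fatou components, since they could otherwise lie in a single component. To rule this out I will argue as in Theorem~\ref{hornWD}. The Fatou component $U$ containing $B$ has an orbit whose limit functions include both $\infty$ and a finite value. So $U$ cannot be preperiodic: orbits in attracting or parabolic basins, Siegel discs and Baker domains do not oscillate in this way. It follows that $U$ is wandering. The remaining delicate point is ensuring that $U$ actually contains $B$, that is, that $\mathring{B}$ is not split among several components. Here it suffices that $f^{N}$ is uniformly close to an affine contraction on $B$. The role of the Julia points $a_1$ and $a_m$ is to guarantee that the model does not accidentally create a periodic component, so that the argument above reduces to this non-preperiodicity check.
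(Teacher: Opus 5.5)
\textbf{There is a genuine gap: the model map $h$ you describe does not exist, and this is exactly where the difficulty of the lemma lies.}

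You require $h(z)=a_1+2(z-a_1)$ on a small disc around $a_1$ and $h(Q_k)\subseteq D_2'$. But the $Q_k$ accumulate at $a_1$, or, in your fallback, at a point $q$ of the circle bounding that small disc. Since $h\in\mathcal{O}(A)$ must be holomorphic, hence continuous, on an open neighbourhood of $A$, you would need $h(Q_k)\to h(q)$, which is a point near $a_1$. That is incompatible with $h(Q_k)\subseteq D_2'$. For the same reason your pieces do not form a locally finite family, so Proposition~\ref{locallyfiniteArakelian} cannot be invoked. The same objection applies to the infinitely many subdiscs of $D_m'$ with different prescriptions, and to placing $a_m$ ``between'' them with $h(a_m)=a_1$ (interpolation points must moreover lie in $\mathring{A}$).

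This is not a cosmetic defect. The orbit of $B$ must visit the compact set $A\cap D(a_1,r_1)$ infinitely often, each time followed by a longer excursion, so the return positions accumulate. On a disc around any accumulation point, in $D(a_1,r_1)$ or in any $D(a_m,r_m)$, $h$ is a single holomorphic map; by the identity theorem it is a single affine map if your pieces are affine. So the excursion lengths cannot be hard-wired piece by piece. They must be generated by the dynamics of one map. For instance, orbits could follow an exactly prescribed orbit through the centres and drift away from it under relative expansion until they fall into a return disc, with the time this takes governed by how closely they returned.

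The real content of the lemma is making such a mechanism work for \emph{every} $f$ with $\abs{f-h}<\varepsilon$. Here $\varepsilon$ is fixed on each piece, although each piece is visited infinitely often with different required outcomes. This is what the interpolation hypotheses are for. If $f(a_m)=h(a_m)$, $f'(a_m)=h'(a_m)$ and $\abs{f-h}<\varepsilon$ on $D(a_m,s)$, then Schwarz's lemma gives $\abs{f(z)-h(z)}\le\varepsilon\abs{z-a_m}^2/s^2$. So the error is small \emph{relative} to the distance from the centre, and long shadowing times stay under control.

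In your proposal the interpolation is used only to make $a_1$ and the $a_m$ Julia points, which contributes nothing to this. Your assertion that $J(f)$ then contains arcs near $a_m$ separating the subdiscs is also unjustified: local univalence merely transports $J(f)$ near $a_1$ to a neighbourhood of $a_m$, and that set need not contain any arc.

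\textbf{What does work.} Suppose a neighbourhood of $B$ is known to lie in the Fatou set, with limit functions both $\infty$ and a finite constant. Then your observation that periodic Fatou components of entire functions have either only $\infty$ or only finite limit functions correctly shows the component is wandering. This is a legitimate shortcut compared with separating the images by preimages of an attracting basin. Your worry about $\mathring{B}$ being split among several components is vacuous, since $\mathring{B}$ is connected.

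For comparison, the paper gives no self-contained argument for this lemma. It asserts that the construction in \cite[Example 1]{EremenkoLjubichPathologicalExamples} is parametric in $(a_m)$ and $(r_m)$, and the statement, interpolation conditions included, is modelled on that construction.
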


We are now ready to prove Theorems~\ref{approxWD_escaping} and~\ref{approxWD_oscillating}.

\begin{proof}[Proof of Theorem~\ref{approxWD_escaping}]
    We begin by applying Proposition~\ref{Arakelian_so_gosti} to the closed set $F$ and the simply connected open set $U$ to obtain an Arakelian set $F \Subset E \subseteq U$ with locally finite connected components.
    Next, we apply Lemma~\ref{separate} to the sets $E$ and $U$ to obtain an unbounded Arakelian set 
    $\Gamma \subseteq U \setminus E$ such that any connected component of the set $\C \setminus \Gamma$ intersects either $\C \setminus U$ or at most one connected component of $E$.
    Notice that since the set $\C \setminus U$ is non-empty, there exists a connected component $W$ of $\C \setminus \Gamma$ that intersects $\C \setminus U$. 
    The set $W$ is open and unbounded, the latter following from $\Gamma$ being an Arakelian set, 
    so we may apply Lemma~\ref{modelEscaping} to obtain an Arakelian set $A \subseteq \C$, a holomorphic function $h_0 \in \mathcal{O}(A)$, a locally constant function $\varepsilon_0 \colon A \to (0, \infty)$, and a closed disc $B \subseteq A$ as described in the lemma.
    Let $B = D(b, \rho)$, where $b$ and $\rho$ denote the center and radius of $B$ respectively, and finally choose a closed disc  $D = D(a, r) \subseteq W \setminus A$.
    
    We now define the set
    \[\Pi = E \cup \Gamma \cup A \cup D,\]
    see Figure~\ref{approxWDescaping_fig}. 
    Since the sets $E$, $\Gamma$, $A$, and $D$ are pairwise disjoint, locally finite, and Arakelian, 
    it follows from Proposition~\ref{locallyfiniteArakelian} that the set $\Pi$ is also Arakelian. 
    Next we define the locally constant functions $h \in \mathcal{O}(\Pi)$ and $\varepsilon \colon \Pi \to (0,\infty)$ by
    \[h(z) = \begin{cases} 
        b; & z \in E \\
        a; & z \in \Gamma \\
        h_0(z); & z \in A \\
        a; & z \in D\\ 
    \end{cases} 
    \quad \text{and} \quad 
    \varepsilon(z) = \begin{cases}
        \frac{\rho}{2}; & z \in E \\
        \frac{r}{2}; & z \in \Gamma \\
        \varepsilon_0(z); & z \in A \\
        \frac{r}{2}; & z \in D\\ 
    \end{cases}.\]
    Applying Proposition~\ref{PosplositevArakelian} we obtain an entire function $f \in \mathcal{O}(\C)$ such that 
    $\abs{f(z)-h(z)} < \varepsilon(z)$ for all $z \in \Pi$.
    In particular, this implies 
    $f(D) \subseteq \mathring{D}$, $f(\Gamma) \subseteq \mathring{D}$, $f(E) \subseteq \mathring{B}$ and that $f$ has an escaping wandering domain containing $B$.
    
    The function $f$ acts as a contraction on $D$, so the set is contained in some attracting basin $\mathcal{A}$, and it follows that $\Gamma \subseteq f^{-1}(\mathcal{A})$. 
    Now let $F_0$ be a connected component of the set $F$. 
    Since $F_0 \Subset E$ and $f(E) \subseteq \mathring{B}$, the set $F_0$ is contained is some escaping wandering domain $\Omega$.
    Furthermore, since the unbounded set $\Gamma$ is contained in some preimage of the attracting basin $\mathcal{A}$, 
    the function $f$ has an unbounded Fatou component. It then follows from~\cite[Theorem 3.1.6]{HolomorphicDynamics} that the wandering domain $\Omega$ is simply connected.
    It remains to verify that $\Omega \subseteq U$.   
    If this is not the case, there exists a curve $\eta \subseteq \Omega$ connecting a point in $E$ to a point in $\C \setminus U$.
    Since by construction the endpoints of this curve lie in different components of the set $\C \setminus \Gamma$,
    the curve $\eta$ must intersect $\Gamma$ and so contains a point that gets sent to $\mathcal{A}$. 
    This, however, contradicts the fact that all points in $\Omega$ escape uniformly to infinity under iteration. 
\end{proof}

\begin{figure}[ht]
    \centering
    \includegraphics[width=0.6\textwidth]{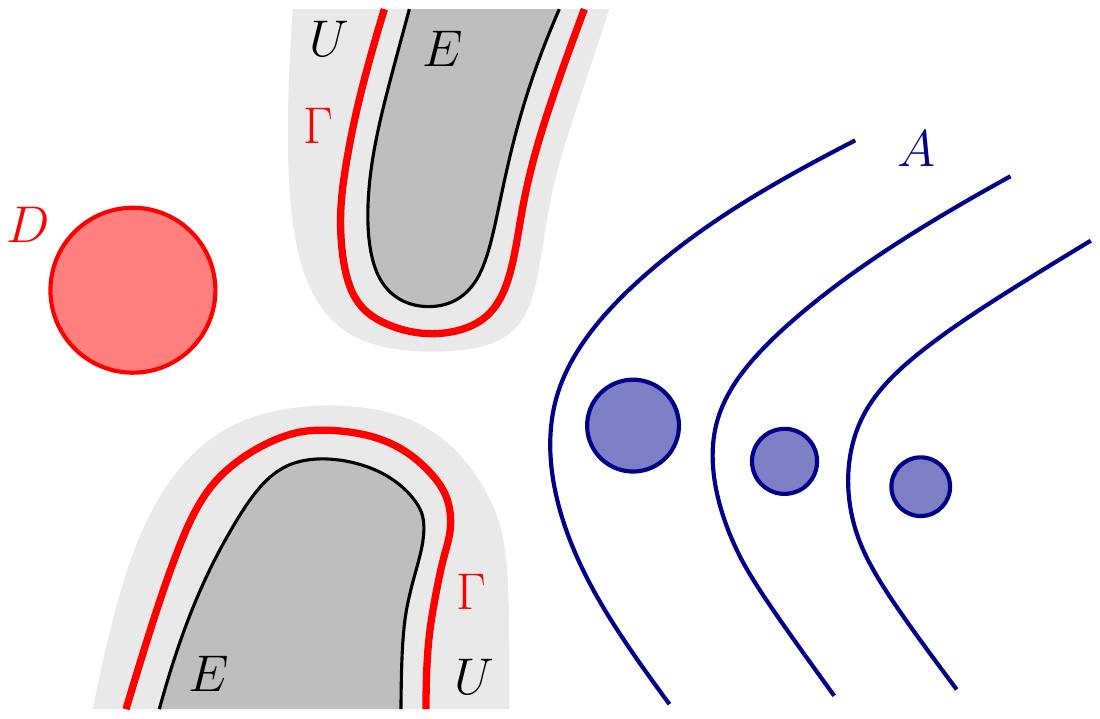}
    \caption{The construction of the set $\Pi$ in the proof of Theorem~\ref{approxWD_escaping}. The set $A$ allows us to construct escaping wandering domains, while the sets $D$ and $\Gamma$ ensure that these wandering domains are contained in $U$.}
    \label{approxWDescaping_fig}  
\end{figure}

\begin{proof}[Proof of Theorem~\ref{approxWD_oscillating}]
    The proof follows the same strategy as in the case of escaping wandering domains, 
    except that we use Lemma~\ref{modelOscillating}, which provides a model for oscillating wandering domains, 
    instead of Lemma~\ref{modelEscaping}, which provides a model for escaping ones.

    We again begin by applying Proposition~\ref{Arakelian_so_gosti} to the closed set $F$ and the simply connected open set $U$ to obtain an Arakelian set $F \Subset E \subseteq U$ with locally finite connected components.
    Next, we apply Lemma~\ref{separate} to the sets $E$ and $U$ to obtain an unbounded Arakelian set 
    $\Gamma \subseteq U \setminus E$, such that any connected component of the set $\C \setminus \Gamma$ intersects either $\C \setminus U$ or at most one connected component of $E$.
    Notice that since the set $\C \setminus U$ is non-empty, there exists a connected component $W$ of $\C \setminus \Gamma$ that intersects $\C \setminus U$. 
    The set $W$ is open and unbounded, so we can choose a decreasing sequence of positive numbers $(r_m)_{m \ge 1}$ and a sequence of complex numbers $(a_m)_{m \ge 1}$ such that $\abs{a_{m+1}}> 2r_1 + \abs{a_m}$ and $D(a_m, r_m) \subseteq W$ for all $m \ge 1$.
    We then apply Lemma~\ref{modelOscillating} to these two sequences and obtain an Arakelian set $A \subseteq W$, a holomorphic function $h_0 \in \mathcal{O}(A)$, a locally constant function $\varepsilon_0 \colon A \to (0, \infty)$, and a closed disc $B \subseteq A$ as described in the lemma.
    Let $B = D(b, \rho)$, where $b$ and $\rho$ denote the center and radius of $B$ respectively, and finally choose a closed disc  $D = D(a, r) \subseteq W \setminus A$.
    
    We now define the set
    \[\Pi = E \cup \Gamma \cup A \cup D,\]
    see Figure~\ref{approxWDoscillating_fig}. 
    Since the sets $E$, $\Gamma$, $A$, and $D$ are pairwise disjoint, locally finite, and Arakelian, 
    it follows from Proposition~\ref{locallyfiniteArakelian} that the set $\Pi$ is also Arakelian. 
    Next we define a holomorphic function $h \in \mathcal{O}(\Pi)$ and a locally constant function $\varepsilon \colon \Pi \to (0,\infty)$ by 
    \[h(z) = \begin{cases} 
        b; & z \in E \\
        a; & z \in \Gamma \\
        h_0(z); & z \in A \\
        a; & z \in D\\ 
    \end{cases} 
    \quad \text{and} \quad 
    \varepsilon(z) = \begin{cases}
        \frac{\rho}{2}; & z \in E \\
        \frac{r}{2}; & z \in \Gamma \\
        \varepsilon_0(z); & z \in A \\
        \frac{r}{2}; & z \in D\\ 
    \end{cases}.\]
    Applying Proposition~\ref{PosplositevArakelian}, we obtain an entire function $f \in \mathcal{O}(\C)$ such that 
    $\abs{f(z)-h(z)} < \varepsilon(z)$ for all $z \in \Pi$, as well as $f(a_m) = h(a_m)$ and $f'(a_m) = h'(a_m)$ for all $m \ge 1$.
    The latter can be ensured, since each point $a_m$ is contained in a distinct connected component of the Arakelian set $A$.
    It then follows from the construction that $f(D) \subseteq \mathring{D}$, $f(\Gamma) \subseteq \mathring{D}$, $f(E) \subseteq \mathring{B}$ and that $f$ has an oscillating wandering domain containing $B$.

    The function $f$ acts as a contraction on $D$, so the set is contained in some attracting basin $\mathcal{A}$, and it follows that $\Gamma \subseteq f^{-1}(\mathcal{A})$. 
    Now let $F_0$ be a connected component of the set $F$.
    Since $F_0 \Subset E$ and $f(E) \subseteq \mathring{B}$, the set $F_0$ is contained in some oscillating wandering domain $\Omega$.
    The same arguments as at the end of the proof of Theorem~\ref{approxWD_escaping} then show that $\Omega$ is simply connected and contained in $U$.
\end{proof}

\begin{figure}[ht]
    \centering
    \includegraphics[width=0.6\textwidth]{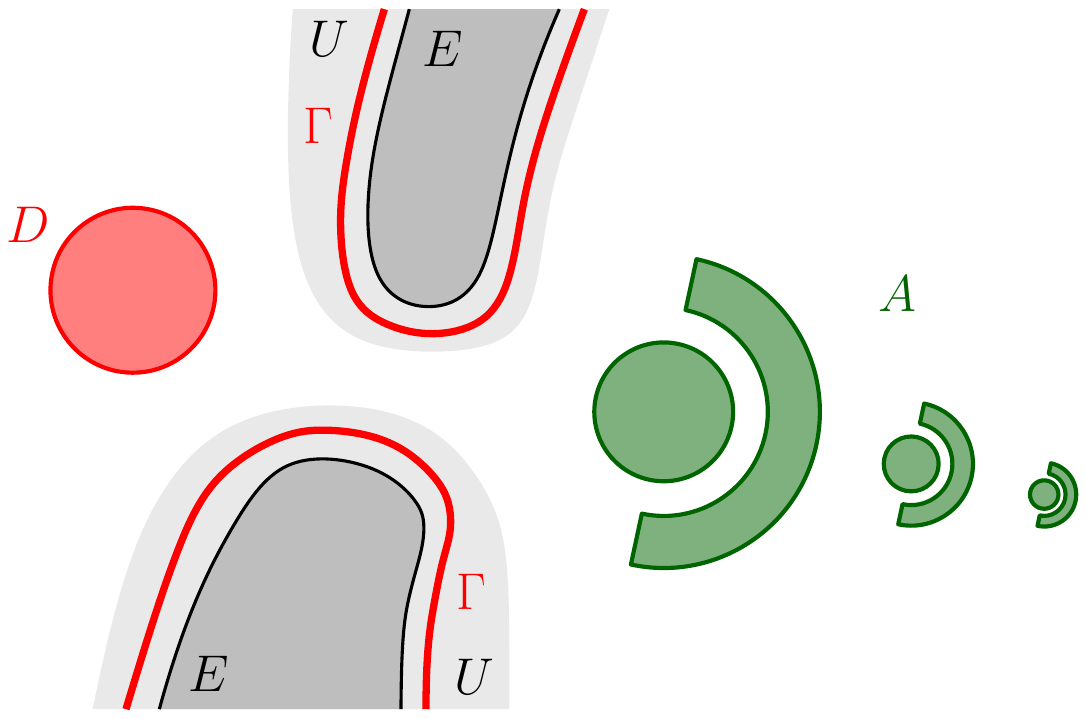}
    \caption{The construction of the set $\Pi$ in the proof of Theorem~\ref{approxWD_oscillating}. The set $A$ allows us to construct oscillating wandering domains, while the sets $D$ and $\Gamma$ ensure that these wandering domains are contained in $U$.}
    \label{approxWDoscillating_fig}  
\end{figure}

\begin{remark}
    Note that in the above constructions, each connected component of $E$ is contained in a distinct wandering domain of the function $f$.
    This follows from the same argument used to prove that the wandering domains are contained in $U$.
    However, after one iteration, all these wandering domains are sent to the same Fatou component.
    One can modify the construction by applying either Lemma~\ref{modelEscaping} or Lemma~\ref{modelOscillating} once
    for each connected component of $E$, and then sending each component to its own Arakelian set given by the lemmas.
    This way, the orbits of wandering domains containing different connected components of $E$ remain disjoint under iteration.
    Moreover, each component can be chosen independently to lie in either an escaping or an oscillating wandering domain. 
    See the proof of Theorem~\ref{combination} for more details.
\end{remark}

\begin{remark}
    If the connected components of $F$ are not locally finite, they cannot be contained in distinct wandering domains.
    If, however, $F$ does have locally finite connected components, this can be achieved.
    To do so, let $\set{V_j}_{j \in J}$ be a locally finite family of pairwise disjoint open sets such that each set $V_j$ contains one connected component of $F$.
    Since a locally finite intersection of simply connected open sets is simply connected,
    we may apply Proposition~\ref{Arakelian_so_gosti} to the closed set $F$ and the simply connected open set $U \cap \bigcup_{j \in J} V_j$
    to obtain the Arakelian set $E$. The proof then proceeds as above.
\end{remark}

The remainder of this section is devoted to proving Theorem~\ref{combination}. 
Before presenting the proof, we state a final lemma that allows us to prescribe the geometry of bounded wandering domains.

\begin{lemma}\label{PreciseShape}
    Let $K \subseteq W \subseteq \C$, where $K$ is a compact set without holes and $W$ is an unbounded open set.
    Then there exists an Arakelian set $K \subseteq A \subseteq W$, a holomorphic function $h \in \mathcal{O}(A)$, and a locally constant function $\varepsilon \colon A \to (0, \infty)$ such that the following holds.
    If $f \in \mathcal{O}(\C)$ is an entire function satisfying $\abs{f(z)-h(z)} < \varepsilon(z)$ for all $z \in A$, then every connected component of $\mathring{K}$ is a simply connected wandering domains of $f$.
    These wandering domains can be chosen to be either escaping or oscillating.
\end{lemma}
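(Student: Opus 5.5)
We would follow the Boc~Thaler / Martí-Pete--Rempe--Waterman scheme, but repackaged as a single Arakelian approximation with a locally constant error, as in the proofs of Theorem~\ref{hornWD} and Lemma~\ref{modelEscaping}.

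\textbf{Setup.} Since $W$ is open, unbounded and contains $K$, we may replace $W$ by the connected component containing $K$. If $K$ meets several components, we treat each separately; only finitely many components meet $K$. Choose compact sets $K_n$ without holes with
\[
K_{n+1}\subseteq \mathring K_n,\qquad \bigcap_n K_n=K,\qquad K_0\subseteq W,
\]
and points $p_n\in\mathring K_{n-1}\setminus K_n$ accumulating on all of $\partial K$.

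\textbf{Model in the escaping case.} Apply Lemma~\ref{curves_and_discs} to the unbounded set $W\setminus K_0$, or a connected unbounded open subset of it. This gives discs $(D_n)$ and separating Arakelian sets $(\gamma_n)$ going to infinity. Also pick an absorbing disc $D$ away from everything. The naive choice would map $K_0$ by a translation $z\mapsto z-c+a_1$ into $D_1$, then $D_n$ into $D_{n+1}$ by translation, and send the $\gamma_n$ into $D$. However, the points $p_n$ must reach $D$ at time $n$, while $K$ is carried along. A single locally constant $\varepsilon$ cannot control orbits of points outside $A$, so the $p_n$ must lie in the approximation set.

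We therefore build the model so that it encodes the whole orbit. Define $A$ to contain:
\begin{itemize}
\item $K_0$, mapped by an affine map $L_0$ onto a small copy $L_0(K_0)\subseteq \mathring D_1$;
\item inside each $D_n$, the nested copies $\Phi_n(K_m)$, with $\Phi_n$ affine, together with small discs $B_n$ around $\Phi_{n-1}(p_n)$ disjoint from $\Phi_{n-1}(K_n)$.
\end{itemize}
Rather than whole discs, each $D_n$ in $A$ is replaced by $\Phi_{n-1}(K_n)\cup B_n$, which is compact without holes. The model $h$ equals $\Phi_{n}\circ\Phi_{n-1}^{-1}$ (affine) on $\Phi_{n-1}(K_n)$, equals a constant in $D$ on $B_n$ and on the $\gamma_k$, and equals the identity-ish contraction on $D$.

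The error $\varepsilon$ on the $n$-th piece is taken small enough, via Lemma~\ref{approxIter}-type estimates and the compactness of every piece, so that two things hold for any $f$ with $|f-h|<\varepsilon$. First, $f^n(K_n)$ stays inside a neighbourhood of $\Phi_n(K_n)$. Second, $f^n(p_n)\in D$. The key point is that the composition up to step $n$ only involves pieces $0,\dots,n$, each with its own error. Since these are chosen inductively, we can shrink $\varepsilon_n$ after fixing $\varepsilon_0,\dots,\varepsilon_{n-1}$. The only requirement is that the errors on earlier pieces already give a definite margin. To arrange this, we require on piece $n$ that $f^n(K_{n})$ lands in a $\lambda_n$-neighbourhood, with $\lambda_n$ fixed before $\varepsilon_n$, exactly as in the inductive proof of Theorem~\ref{stripWDescaping}, but with all sets compact.

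\textbf{Conclusion and oscillating case.} Interior points of $K$ then escape uniformly, so $\mathring K$ lies in the Fatou set. Preimages of the basin of $D$ accumulate on $\partial K$, so $\partial K$ lies in the Julia set. Each component of $\mathring K$ is therefore a Fatou component. The separating $\gamma_n$ make the images lie in distinct Fatou components, so these components are wandering. Simple connectivity follows from~\cite[Theorem 3.1.6]{HolomorphicDynamics}, since the preimage of the basin containing $\gamma_1$ is unbounded. For the oscillating version we instead embed the compact pieces into the Eremenko--Lyubich model of Lemma~\ref{modelOscillating}, which returns near a fixed point along a subsequence. The interpolation at the points $a_m$ is handled by Proposition~\ref{PosplositevArakelian}.

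\textbf{Main obstacle.} The hardest part is making the error function genuinely locally constant while controlling $n$-fold compositions. I would first try the inductive choice of $\varepsilon_n$ described above, where each constraint only involves already fixed earlier pieces. If this does not work, the fallback is to avoid Arakelian entirely for the $K$-part: use Runge's construction from~\cite{GeometryOfWD} and feed its output into $h$ on a neighbourhood of the $D_n$. That output is compact-by-compact, so local finiteness is preserved.
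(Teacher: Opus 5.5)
\textbf{There is a genuine gap, at exactly the step you flag, and choosing the tolerances inductively cannot close it.}

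For every $n\ge 2$ the point $p_n$ lies in $\mathring K_{n-1}\subseteq K_1$. That is the single component of $A$ on which $h$ is your first affine map. Since $\varepsilon$ is locally constant, it equals one fixed number $\varepsilon_0$ there.

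An admissible $f$ may displace $f(p_n)$ from its model position by almost $\varepsilon_0$, and the nearly affine later pieces merely transport this displacement. On the other hand, $B_n$ must be disjoint from $\Phi_{n-1}(K_n)$, so after pulling back by $\Phi_{n-1}$ its radius is below $\dist(p_n,K_n)\to 0$. Hence $f^{n-1}(p_n)\in B_n$ demands a precision on pieces $0,\dots,n-2$ that tends to $0$ with $n$. Those tolerances were fixed long before, and $\varepsilon_n$ does not enter the condition at all. The ``definite margin'' you hope for does not exist.

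The same happens to $K$ itself. The affine pieces must avoid the traps, so they contain only neighbourhoods of the images of $K$ of relative size tending to $0$. Meanwhile a constant perturbation of size $\varepsilon_0/2$ on piece $0$ is admissible.

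The admissible class in fact contains genuine counterexamples. Let $K$ be a closed disc with centre $c$. On piece $0$, replace $L_0$ by $L_0\circ\psi_\delta$ with $\psi_\delta(z)=c+(1-\delta)(z-c)$; this is within tolerance for small $\delta$. On the later pieces, approximate $h$ with sufficiently fast decreasing errors. Then a whole neighbourhood of $K$ is carried into the orbit of a compact subset of $\mathring K$ and escapes uniformly, so $\partial K$ lies in the Fatou set. Your fallback, inserting Boc~Thaler's function (or his pieces) into $h$ with a locally constant tolerance, is open to the same perturbations.

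\textbf{Comparison with the paper.} The paper's proof is in substance your fallback. It keeps Boc~Thaler's compact pieces $A_k$, relocated to discs $D_k\subseteq W\setminus K$ with $D_k\cap D(0,k)=\emptyset$, and replaces $D(3,1)$ by a compact $H$ with $K\Subset H$. It sets $h=h_k$ on $A_k$, takes $\varepsilon|_{A_k}<\varepsilon_k-\norm{f-h_k}_{A_k}$, and asserts that $\norm{g-h_1}_{D(0,1)}<\varepsilon_1$ together with $\norm{g-h_n}_{A_n}<\varepsilon_n$ for all $n$ already forces the dynamics.

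The paper uses no separating curves. Wandering follows from univalence of $f^n$ near $K$ together with $\partial K\subseteq J$, and simple connectivity from $K$ having no holes. This also avoids your appeal to Lemma~\ref{curves_and_discs}, which is problematic here: $W\setminus K_0$ need not contain an unbounded connected open set, and the component of $W$ containing $K$ may be bounded.

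However, the paper's robustness assertion runs into precisely the issue above. In Boc~Thaler's scheme, and in the proof of Theorem~\ref{stripWDescaping}, the tolerance $\varepsilon_{n+1}$ is chosen for all $g$ within $2\varepsilon_{n+1}$ of $h_{n+1}=f_n$ on the whole strip $P_n$, and Lemma~\ref{approxIter} is used. There the landing of $p_n$ comes from $f$ being within $\sum_{k\ge n}\varepsilon_k$ of $f_{n-1}$ on all of $D(0,4n-3)$. The fixed tolerances on earlier pieces do not provide this, and a perturbation of the same kind defeats that model as well.

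So you have correctly identified the crux, but neither your scheme nor the paper's sketch resolves it. A sound version must drop the ``every $f$ with $\abs{f-h}<\varepsilon$'' formulation. Instead, build the bounded components into a global induction in which each step also approximates the previous function on a large set containing all earlier pieces, as $P_n$ does in Theorems~\ref{stripWDescaping} and~\ref{stripWDoscillating}. That way the error on old pieces keeps shrinking.
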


\begin{proof}
    The lemma follows by modifying the construction in the proof of~\cite[Theorem 1]{GeometryOfWD} in the escaping case 
    and in the proof of~\cite[Theorem 2]{GeometryOfWD} in the oscillating case. 
    Here we briefly outline the required modifications and leave the remaining details to the reader.

    First, we clarify the change in perspective. 
    The theorems in~\cite{GeometryOfWD} state that a bounded connected regular open set $\Omega$, whose closure has connected complement, is a wandering domain of some entire function. 
    However, the same proof shows that, given a compact set $K$ without holes, there exists an entire function for which every connected component of $\mathring{K}$ is a wandering domain. 
    Instead of starting with an open set $\Omega$ and taking its closure, we begin with the compact set $K$ and consider its interior.
    This is also the approach taken by the authors in~\cite{EremenkoConj}.

    Next, observe that the proofs of the above results rely on a Runge approximation theorem with interpolation.
    This is not necessary, and one can instead use standard Runge approximation.
    Indeed, the images of the interpolation points do not need to be prescribed precisely; they must only be sent to an attracting basin, which is an open set.
    This generalization is also demonstrated in the proof of~\cite[Theorem 3.1]{EremenkoConj}.
    See also the proofs of Theorems~\ref{stripWDescaping} and Theorems~\ref{stripWDoscillating} in Section~\ref{sec3}.

    Suppose that $K \subseteq D(3,1)$, and let $f$ be the entire function given by one of the theorems in~\cite{GeometryOfWD}.
    The function $f$ is obtained as the limit of a sequence of entire functions $(f_n)_{n \ge 0}$ constructed inductively using Runge approximation.
    For the base case, we set $f_0(z) = z/2$.
    For $n \ge 1$, assuming that $f_{n-1}$ has already been constructed, 
    we define a compact set without holes $A_n \subseteq D(4n -1, 1)$, a holomorphic function $h_n \in \mathcal{O}(D(0, 4n-3) \cup A_n)$ and a positive number $\varepsilon_n > 0$.
    The function $h_n$ agrees with $f_{n-1}$ on $D(0, 4n-3)$ and is piecewise linear on $A_n$.
    We then obtain $f_n$ by approximating $h_n$ on the set $D(0,4n-3) \cup A_n$ with an error of at most $\varepsilon_n$.
    Approximating on the sets $D(0,4n-3)$ ensures that the sequence $(f_n)_{n \ge 1}$ converges uniformly on compact sets to the function $f$,
    while approximating on the sets $A_n$ and $D(0,1)$ guarantees the desired dynamical properties of $f$.
    In particular, the latter follows since the function $f$ satisfies $\norm{f - h_1}_{D(0,1)} < \varepsilon_1$ and $\norm{f - h_n}_{A_n} < \varepsilon_n$ for all $n \ge 1$.
    We now define the Arakelian set
    \[A = D(0,1) \cup \bigcup_{k \ge 1} A_k,\]
    together with a holomorphic function $h \in \mathcal{O}(A)$ given by
    \[h(z) = \begin{cases} 
        h_1(z); & z \in D(0,1) \\
        h_k(z); & z \in A_k \\
    \end{cases},
    \]
    and a locally constant function $\varepsilon \colon A \to (0, \infty)$ such that
    \[\varepsilon \vert_{D(0,1)} < \varepsilon_1 - \norm{f - h_1}_{D(0,1)} \quad \text{and} \quad \varepsilon \vert_{A_k} < \varepsilon_k - \norm{f - h_k}_{A_k}\]
    for all $k \ge 1$. 
    Then, given an entire function $g \in \mathcal{O}(\C)$ satisfying $\abs{g(z)-h(z)} < \varepsilon(z)$ for all $z \in A$,
    we have $\norm{g - h_1}_{D(0,1)} < \varepsilon_1$ and $\norm{g - h_n}_{A_n} < \varepsilon_n$ for all $n \ge 1$.
    It follows that the connected components of $\mathring{K}$ are also wandering domains of the function $g$.
    Since the set $K$ has no holes, these wandering domains are simply connected.

    Finally, we address the condition $K \subseteq A \subseteq W$.
    Let $(D_n)_{n \ge 1}$ be a family of pairwise disjoint closed discs satisfying $D_n \subseteq W \setminus K$ and $D_n \cap D(0,n) = \emptyset$ for all $n \ge 1$.
    We also choose a compact set $H$ without holes such that $K \Subset H \subseteq W \setminus \bigcup_{m \ge 1}{D_m}$.
    By modifying the linear maps used in the definitions of the functions $(h_n)_{n \ge 1}$,
    we can replace the sets $D(0,1)$, $D(3,1)$, and $D(4n -1, 1)$ by the sets $D_1$, $H$, and $D_n$, respectively, for $n \ge 2$.
    Then $K \subseteq A \subseteq W$, and since the family $(D_n)_{n \ge 1}$ is locally finite, the remaining arguments of the proof remain valid.
\end{proof}

\begin{proof}[Proof of Theorem~\ref{combination}]
    Without loss of generality we may assume that the connected components of $U$ are unbounded. 
    Indeed, since $U$ is simply connected, the connected components of $\C \setminus U$ are unbounded, 
    so any bounded component of $U$ can be extended to be unbounded and still remain simply connected.
    
    We begin by applying Lemma~\ref{separate} to the sets $E$ and $U$ to obtain an unbounded Arakelian set 
    $\Gamma \subseteq U \setminus E$, such that any connected component of the set $\C \setminus \Gamma$ intersects either $\C \setminus U$ or at most one connected component of $E$.
    Let $(E_n)_{n \ge 1}$ be the connected components of $E$, there are only countably many since they are locally finite,
    and let $V_n$ denote the connected component of $\C \setminus \Gamma$ that contains $E_n$.
    Since $\C \setminus U \neq \emptyset$, there also exists a connected component $W$ of $\C \setminus \Gamma$ that intersects $\C \setminus U$.
    Note that the sets $W$ and $(V_n)_{n \ge 1}$ are unbounded.
    In particular, we can choose a pairwise disjoint locally finite family $(W_n)_{n \ge 1}$ of connected unbounded open subsets of $W$. 
    Finally, we choose a closed disc $D = D(a,r) \subseteq W$ disjoint from all the sets $W_n$.

    Depending on whether the set $E_n$ is bounded or unbounded, we proceed in one of two ways:
    \begin{itemize}
        \item If $E_n$ is bounded, then, being Arakelian, it is a compact set without holes.
        Thus, we can apply Lemma~\ref{PreciseShape} to the sets $E_n$ and $V_n$. 
        After choosing whether the associated wandering domains should be escaping or oscillating, 
        we obtain an Arakelian set $A_n$, a holomorphic function $h_n \in \mathcal{O}(A_n)$, and a locally constant function $\varepsilon_n \colon A_n \to (0, \infty)$.

        \item If $E_n$ is unbounded, we instead use Lemma~\ref{modelEscaping} or Lemma~\ref{modelOscillating}, according to whether we want the associated wandering domain to be escaping or oscillating.
        In the escaping case, we apply Lemma~\ref{modelEscaping} to the sets $E_n$ and $W_n$.
        In the oscillating case, we choose appropriate sequences $(r_m^n)_{m \ge 1}$ and $(a_m^n)_{m \ge 1}$ such that $D(a_m^n, r_m^n) \subseteq W_n$ for all $m \ge 1$ and then apply Lemma~\ref{modelOscillating}.
        Thus we obtain an Arakelian set $A_n \subseteq W_n$, a holomorphic function $h_n \in \mathcal{O}(A_n)$, a locally constant function $\varepsilon_n \colon A_n \to (0, \infty)$, and a closed disc $B_n \subseteq A_n$.
    \end{itemize}
    
    \begin{figure}[t]
    \centering
    \includegraphics[width=0.545\textwidth]{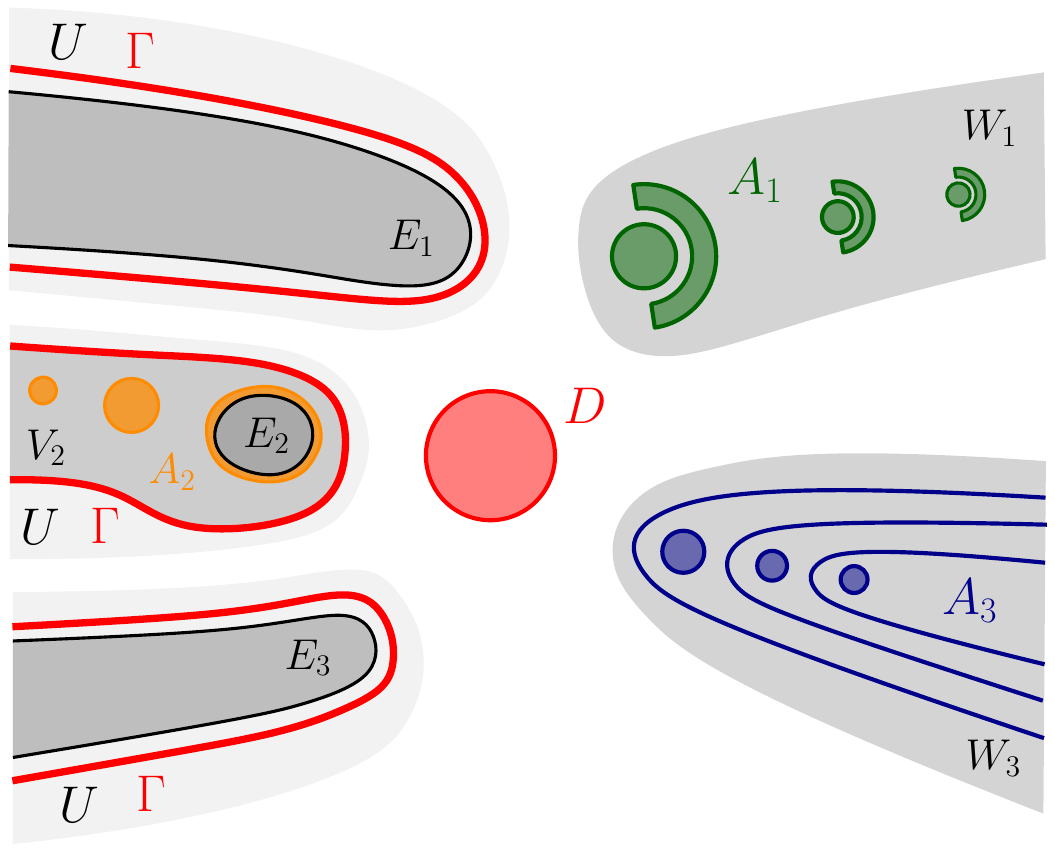}
    \caption{The construction of the set $\Pi$ in the proof of Theorem~\ref{combination}. 
    Each connected component of $E$ has an associated Arakelian set that facilitates the desired dynamics.
    In this example, $E_1$ and $E_3$ will be in oscillating and escaping wandering domain, respective, while $\mathring{E}_2$ will be an escaping wandering domain.}
    \label{approxWDcombination_fig}  
    \end{figure}

    We now define the set 
    \[\Pi = E \cup \Gamma \cup \bigcup_{k \ge 1} A_k \cup D,\]
    see Figure~\ref{approxWDcombination_fig}.
    If $E_n$ is bounded, then $A_n$ is contained in its own connected component of $\C \setminus \Gamma$.
    If $E_n$ is unbounded, then $A_n \subseteq W_n$, where $(W_n)_{n \ge 1}$ is a locally finite family of subsets of $W$.
    Thus the collection of sets appearing in the above union is locally finite. 
    Since these sets are also pairwise disjoint and Arakelian, Proposition~\ref{locallyfiniteArakelian} implies that $\Pi$ is an Arakelian set.
    Next we define the holomorphic function $h \in \mathcal{O}(\Pi)$ and the locally constant function $\varepsilon \colon \Pi \to (0, \infty)$ by 
    \[h(z) = \begin{cases} 
        b_n; & z \in E_n \text{ unbounded}\\
        a; & z \in \Gamma \\
        h_n(z); & z \in A_n \\
        a; & z \in D\\ 
    \end{cases}
    \quad \text{and} \quad 
    \varepsilon(z) = \begin{cases}
        \frac{\rho_n}{2}; & z \in E_n \text{ unbounded}\\
        \frac{r}{2}; & z \in \Gamma \\
        \varepsilon_n(z); & z \in A_n \\
        \frac{r}{2}; & z \in D\\ 
    \end{cases}.\]
    Note that both functions are indeed defined on the entire set $\Pi$, since $E_n \subseteq A_n$ whenever $E_n$ is bounded.
    We now apply Proposition~\ref{PosplositevArakelian} to obtain an entire function $f \in \mathcal{O}(\C)$ such that 
    $\abs{f(z)-h(z)} < \varepsilon(z)$ for all $z \in \Pi$, as well as $f(a_m^n) = h(a_m^n)$ and $f'(a_m^n) = h'(a_m^n)$ for all $m \ge 1$ and $n \ge 1$ with $E_n$ bounded.
    This is possible since each point $a_m^n$ is contained in a distinct connected component of the Arakelian set $A_n$.
    It then follows, by the same arguments as in the previous proofs in this section, that $f$ has the desired properties.
\end{proof}
    
\printbibliography

\end{document}